\theoremstyle{thmstyleone}%
\newtheorem{theorem}{Theorem}%
\theoremstyle{thmstyletwo}%
\newtheorem{example}{Example}%
\newtheorem{remark}{Remark}%
\theoremstyle{thmstylethree}%
\newtheorem{definition}{Definition}%
\newtheorem{lemma}{Lemma}
\newtheorem{corollary}{Corollary}
\newcommand{\R}{\mathbb{R}}
\newcommand{\N}{\mathbb{N}}
\newcommand{\Bcl}{\bar{B}}
\newcommand{\keps}{\kappa_\eps}
\newcommand{\deriv}{\mathrm{d}}
\newcommand{\ldDH}{\mathrm{\underline{d}}}
\newcommand{\eps}{\varepsilon}
\DeclareMathOperator*{\conv}{conv}
\DeclareMathOperator*{\argmin}{arg\,min}
\begin{document}

\title{Analyzing the speed of convergence in nonsmooth optimization via the Goldstein subdifferential with application to descent methods}

\author*[1]{\fnm{Bennet} \sur{Gebken}}\email{bennet.gebken@cit.tum.de}

\affil*[1]{\orgdiv{Department of Mathematics}, \orgname{Technical University of Munich}, \orgaddress{\street{Boltzmannstr. 3}, \city{Garching b. München}, \postcode{85748}, \country{Germany}}}

\abstract{The Goldstein $\eps$-subdifferential is a relaxed version of the Clarke subdifferential which has recently appeared in several algorithms for nonsmooth optimization. With it comes the notion of $(\eps,\delta)$-critical points, which are points in which the element with the smallest norm in the $\eps$-subdifferential has norm at most $\delta$. To obtain points that are critical in the classical sense, $\eps$ and $\delta$ must vanish. In this article, we analyze at which speed the distance of $(\eps,\delta)$-critical points to the minimum vanishes with respect to $\eps$ and $\delta$. Afterwards, we apply our results to gradient sampling methods and perform numerical experiments. Throughout the article, we put a special emphasis on supporting the theoretical results with simple examples that visualize them.}

\keywords{Nonsmooth optimization, Nonsmooth analysis, Nonconvex optimization}
\pacs[MSC Classification]{90C30, 90C56, 49J52}

\maketitle

\section{Introduction}

Theoretical analysis of the speed of convergence of algorithms is an important part of optimization, since numerical examples and benchmarks alone may not capture the entire behavior. This is especially true in nonsmooth optimization, where functions cannot be locally linearized and may instead have complicated and diverse kink structures. In smooth optimization, solution methods with (at least) superlinear convergence, like Newtons method or Quasi-Newton methods, belong to the state of the art. In nonsmooth optimization on the other hand, to the best of the authors' knowledge, methods with provably superlinear convergence have yet to be found, and they were even called a ``wondrous grail'' in \cite{MS2012}. Here, by nonsmooth optimization we mean the minimization of a locally Lipschitz continuous function $f : \R^n \rightarrow \R$ without assuming additional structure like an analytical expression or certain geometrical properties for the set of nonsmooth points. There are severe challenges when analyzing convergence for such general functions: Firstly, $f$ cannot be approximated locally in terms of a polynomial since Taylor expansion is not available. Secondly, the necessary optimality condition $\nabla f(x^*) = 0$ from smooth optimization, which is a nonlinear system of equations, generalizes to the discontinuous inclusion $0 \in \partial f(x^*)$, where $\partial f$ is the Clarke subdifferential \cite{C1990}.
Due to these challenges, the convergence analysis from smooth optimization cannot be generalized to nonsmooth optimization in a straight forward way. 

In practice, the Clarke subdifferential brings an additional challenge: Since $\partial f(x) = \{ \nabla f(x) \}$ whenever $f$ is continuously differentiable in $x$, and since the set of points in which $f$ is not continuously differentiable is typically a null set, the Clarke subdifferential is not a practical way to capture the nonsmoothness of $f$. One way to solve this issue is the usage of the Goldstein $\eps$-subdifferential $\partial_\eps f(x)$ \cite{G1977}, which is the convex hull of the union of all Clarke subdifferentials from a closed $\eps$-ball around $x$. Unless $0 \in \partial_\eps f(x)$, the element $v$ in $-\partial_\eps f(x)$ with the smallest norm is a descent direction for $f$ at $x$.  Computing an approximation of $v$ via an approximation of $\partial_\eps f(x)$ is the basic idea of so-called gradient sampling methods \cite{BLO2005,BCL2020,GP2021,MY2012}. In these methods, once $\| v \|$ lies below a threshold $\delta$, the method reduces the values of $\eps$ and $\delta$ and then continues. In this way, for vanishing sequences $(\eps_j)_j$ and $(\delta_j)_j$, a sequence $(x^j)_j$ is generated that satisfies $\min(\| \partial_{\eps_j} f(x^j) \|) \leq \delta_j$ for all $j \in \N$. By upper semicontinuity of $\partial f$, this implies $0 \in \partial f(x^*)$.

The above descent strategy motivates our main question in this article:
\begin{center}
    \fbox{
        \begin{minipage}{300pt}
            Let $(x^j)_j \in \R^n$, $(\eps_j)_j \in \R^{\geq 0}$ and $(\delta_j)_j \in \R^{\geq 0}$ such that $x^j \rightarrow x^* \in \R^n$, $\eps_j \rightarrow 0$, $\delta_j \rightarrow 0$ and $\min(\| \partial_{\eps_j} f(x^j) \|) \leq \delta_j$ for all $j \in \N$. What is the relationship between the speed of convergence of $(x^j)_j$ and the speeds of $(\eps_j)_j$ and $(\delta_j)_j$?
        \end{minipage}
    }
\end{center}
While our motivation for this question comes from gradient sampling, we will analyze it from a purely abstract point of view, without gradient sampling or any other specific solver in mind. Our main result (and an answer to this question) is Theorem \ref{thm:xj_speed_higher_order}, which states that if $f$ satisfies a $p$-order growth condition (cf. Definition \ref{def:order_minimum}) and a $p$-order semismoothness property (cf. \eqref{eq:higher_order_semismooth}) in $x^*$, then $\| x^j - x^* \|$ is bounded by the maximum of $M \eps_j^{1/p}$ and $M \delta_j^{1/(p-1)}$ for some constant $M > 0$. For $p = 1$, in which case $f$ grows linearly around $x^*$ (and $x^*$ is sometimes referred to as a sharp minimum), the $p$-order semismoothness property is implied by standard semismoothness \cite{M1977}, which covers large classes of functions like convex functions and piecewise differentiable functions \cite{SS2008}. For $p > 1$, it is more challenging to verify. However, we show that it is satisfied for piecewise differentiable functions with a certain higher-order convexity property (cf. Lemma \ref{lem:convex_higher_order}). The obvious application of our results is the analysis of gradient sampling methods and, in particular, their linear convergence. But since our results could also be used to show superlinear or even faster convergence, they may also inspire entirely new methods for nonsmooth optimization.

We are not aware of previous work on our main question in this general form. Nonetheless, similarities can be found when looking at analyses for specific solvers, especially in recent years: In \cite{HSS2017}, linear convergence of the (nonnormalized) random gradient sampling algorithm \cite{BLO2005,K2007,BCL2020} is proven for twice piecewise differentiable max-type functions. The requirements for this include assumptions on the $\mathcal{V}$ and $\mathcal{U}$-spaces (cf. \cite{LOS1999}) of the objective, on the size of the sampling radius $\eps$ w.r.t. the (unknown) minimum $x^*$, affine independence of the gradients and positive definiteness of a weighted Hessian matrix of the selection functions. These assumptions imply quadratic growth around the minimum. However, an analogue of our higher-order semismoothness property does not seem to be required. In \cite{DJ2024}, a descent method based on the Goldstein $\eps$-subdifferential with random sampling is proposed with ``nearly linear'' convergence. The requirements include quadratic growth and a certain smooth substructure around the minimum, and their analysis is based on a certain gradient inequality for the $\eps$-subdifferential. This inequality is similar to the well-known Kurdyka-\L ojasiewicz inequality, which is an important tool for analyzing the speed of convergence for proximal methods \cite{ABS2011,KNS2016,BMM2024}. In \cite{CD2023}, a subgradient method was proposed and analyzed which achieves superlinear convergence for ``uniformly semismoothness'' functions with sharp minima. Finally, in \cite{ZLJ2020}, a descent method based on the Goldstein $\eps$-subdifferential with random sampling and fixed $\eps$ and $\delta$ was proposed, which computes points satisfying $\min(\| \partial_\eps f(x) \|) \leq \delta$ in a finite number of iterations. Due to the finiteness, the authors were able to carry out a non-asymptotic convergence rate analysis in terms of the number of gradient oracle calls in relation to $\eps$ and $\delta$. (Further non-asymptotic analyses can be found in \cite{S2020,JKL2023}.) A nice summary on other recent results in nonsmooth nonconvex optimization can be found in \cite{JKL2023}.

The remainder of this article is structured as follows: Starting off, in Section \ref{sec:basics}, we introduce our notation and the basics of nonsmooth analysis that we need throughout the article. Afterwards, in Section \ref{sec:motivation}, we present three examples that motivate the polynomial growth and the higher-order semismoothness assumption we require for our main result. Since the higher-order semismoothness assumption appears to be novel, we analyze which classes of functions possess this property in Section \ref{sec:higher_order_semismooth}. Subsequently, we prove our main result in Section \ref{sec:analysis_speed_of_convergence}. As an application, in Section \ref{sec:descent_methods}, we consider descent methods based on the Goldstein $\eps$-subdifferential. The Matlab code for our numerical experiments, including an implementation of the deterministic gradient sampling method described in \cite{GP2021,G2022,G2024}, is freely available at \url{https://github.com/b-gebken/DGS}. Finally, in Section \ref{sec:outlook}, we summarize our results and discuss possible directions for future research.

\section{Preliminaries} \label{sec:basics}

    In this section, we briefly introduce the basics of nonsmooth analysis. For a more thorough introduction, we refer to \cite{C1990}. To this end, let $f : \R^n \rightarrow \R$ be locally Lipschitz continuous and let $\Omega$ be the set of points in which $f$ is not differentiable. By Rademacher's Theorem, $\Omega$ is a null set. The \emph{Clarke subdifferential of} $f$ \emph{at} $x \in \R^n$ is defined as
    \begin{equation*}
        \begin{aligned}
            \partial f(x) := \conv &\left( \left\{ \xi \in \R^n : \exists (x^j)_j \in \R^n \setminus \Omega \text{ with } \lim_{j \rightarrow \infty} x^j = x \text{ and } \right. \right. \\
            &\left. \left. \lim_{j \rightarrow \infty} \nabla f(x^j) = \xi \right\} \right),
        \end{aligned}
    \end{equation*}
    and its elements are called \emph{subgradients}. If $x$ is a point with $0 \in \partial f(x)$, then it is called \emph{critical}, which is a necessary condition for optimality. The Clarke subdifferential satisfies the following mean value theorem: For $x,y \in \R^n$ there is some $s \in (0,1)$ and some $\xi \in \partial f(x + s (y - x))$ such that
    \begin{align} \label{eq:mean_value_theorem}
        f(y) - f(x) = \langle \xi, y - x \rangle.
    \end{align}
    To circumvent some of the practical issues of the Clarke subdifferential (cf. \cite{BGLS2006}, Section 9.1), the \emph{Goldstein} $\eps$\emph{-subdifferential} may be used instead. For $x \in \R^n$ and $\eps \geq 0$, it is defined as the compact set
    \begin{align*}
        \partial_\eps f(x) := \conv \left( \bigcup_{y \in \Bcl_\eps(x)} \partial f(y) \right),
    \end{align*}
    where $\Bcl_\eps(x) := \{ y \in \R^n : \| y - x \| \leq \eps \}$ and $\| \cdot \|$ is the Euclidean norm. For $\eps, \delta \geq 0$ we say that $x$ is $(\eps,\delta)$\emph{-critical}, if $\min(\| \partial_\eps f(x) \|) \leq \delta$, i.e., if there is some $\xi \in \partial_\eps f(x)$ with $\|\xi \| \leq \delta$. Clearly, $(\eps,\delta)$-criticality is a weaker optimality condition than criticality. However, upper semicontinuity of $\partial f$ implies that if there are sequences $(x^j)_j \in \R^n$, $(\eps_j)_j, (\delta_j)_j \in \R^{\geq 0}$ with $x^j \rightarrow x^*$, $\eps_j \rightarrow 0$ and $\delta_j \rightarrow 0$ such that $x^j$ is $(\eps_j,\delta_j)$-critical for all $j \in \N$, then $x^*$ is critical (cf. Lemma 4.4.4 in \cite{G2022}). In addition to subdifferentials, we also require directional derivatives of $f$. To assure that they exist and are well-behaved, we occasionally assume that $f$ is \emph{semismooth} \cite{M1977}, which means that $f$ is locally Lipschitz continuous and for any $x \in \R^n$ and $d \in \R^n$, the limit
    \begin{align} \label{eq:def_semismooth}
        \lim_{\xi \in \partial f(x + t d'), t \searrow 0, d' \rightarrow d} \langle \xi, d' \rangle
    \end{align}
    exists. If $f$ is semismooth, then for any $d \in \R^n$, the \emph{directional derivative}
    \begin{align*}
        f'(x,d) = \lim_{t \searrow 0} \frac{f(x + t d) - f(x)}{t}
    \end{align*}
    exists and equals the limit \eqref{eq:def_semismooth}. A large class of semismooth functions are the \emph{piecewise $p$-times differentiable} functions \cite{S2012}, which are continuous functions $f : \R^n \rightarrow \R$ for which there are $p$-times continuously differentiable functions $f_1, \dots, f_m : \R^n \rightarrow \R$, called \emph{selection functions}, such that $f(x) \in \{ f_1(x), \dots, f_m(x) \}$ for all $x \in \R^n$. For such a function, the set $A(x) := \{ i \in \{1,\dots,m\} : f(x) = f_i(x) \}$ is called the \emph{active set}, and the Clarke subdifferential satisfies $\partial f(x) \subseteq \conv(\{ \nabla f_i(x) : i \in A(x) \})$.

    Finally, in situations where it is important to classify the speed of convergence, we use the standard concepts of \emph{Q-} and \emph{R-convergence}, which can be found in \cite{NW2006,UU2012}.

\section{Motivating the strategy} \label{sec:motivation}

    In this section, we consider simple examples that motivate the assumptions and concepts that we use throughout the article. To this end, assume that we are in the setting of our main question, i.e., let $(x^j)_j \in \R^n$, $(\eps_j)_j \in \R^{\geq 0}$ and $(\delta_j)_j \in \R^{\geq 0}$ such that $x^j \rightarrow x^* \in \R^n$, $\eps_j \rightarrow 0$, $\delta_j \rightarrow 0$ and $\min(\| \partial_{\eps_j} f(x^j) \|) \leq \delta_j$ for all $j \in \N$. We are interested in the question whether the speed of convergence of $(x^j)_j$ can be derived from the speeds of $(\eps_j)_j$ and $(\delta_j)_j$. Clearly, for arbitrary locally Lipschitz continuous functions $f$, this is not possible: In the trivial case where $f$ is constant, $(x^j)_j$ may converge arbitrarily slowly, independently from $(\eps_j)_j$ and $(\delta_j)_j$. Thus, we first have to analyze for which subclass of the class of locally Lipschitz continuous functions we even have a chance to obtain a positive answer. To derive the properties that these functions must have, we construct a series of examples where $(\eps_j)_j$ and $(\delta_j)_j$ may converge arbitrarily fast, but $(x^j)_j$ only converges slowly. 
    
    As a start, the case where $f$ is constant can be avoided by assuming that $x^*$ is a strict local minimum of $f$. Unfortunately, as the following example shows, this assumption is not enough:
    \begin{example} \label{example:exponential_decrease}
        Consider the function
        \begin{align*}
            f : \R^2 \rightarrow \R, \quad x \mapsto \max( \sigma(|x_1|), |x_2|),
        \end{align*}
        where $\sigma : \R^{\geq 0} \rightarrow \R$ is locally Lipschitz continuous with $\sigma(0) = 0$, $\sigma(t) > 0$ for all $t > 0$ and $\sigma|_{\R^{>0}}$ continuously differentiable. Then $f$ is locally Lipschitz continuous with the unique global minimum $x^* = 0$. For the set of nonsmooth points $\Omega$ of $f$ we have
        \begin{align*}
            \Omega \subseteq \{ (t,\sigma(|t|))^\top : t \in \R \} \cup \{ (t,-\sigma(|t|))^\top : t \in \R \}.
        \end{align*}
        Figure \ref{fig:example_exponential_decrease}(a) shows the graph of $f$ and the set of nonsmooth points for $\sigma$ as in \eqref{eq:example_exponential_decrease_sigma} below.\\
        \begin{figure}
            \centering
            \parbox[b]{0.49\textwidth}{
                \centering 
                \includegraphics[width=0.4\textwidth]{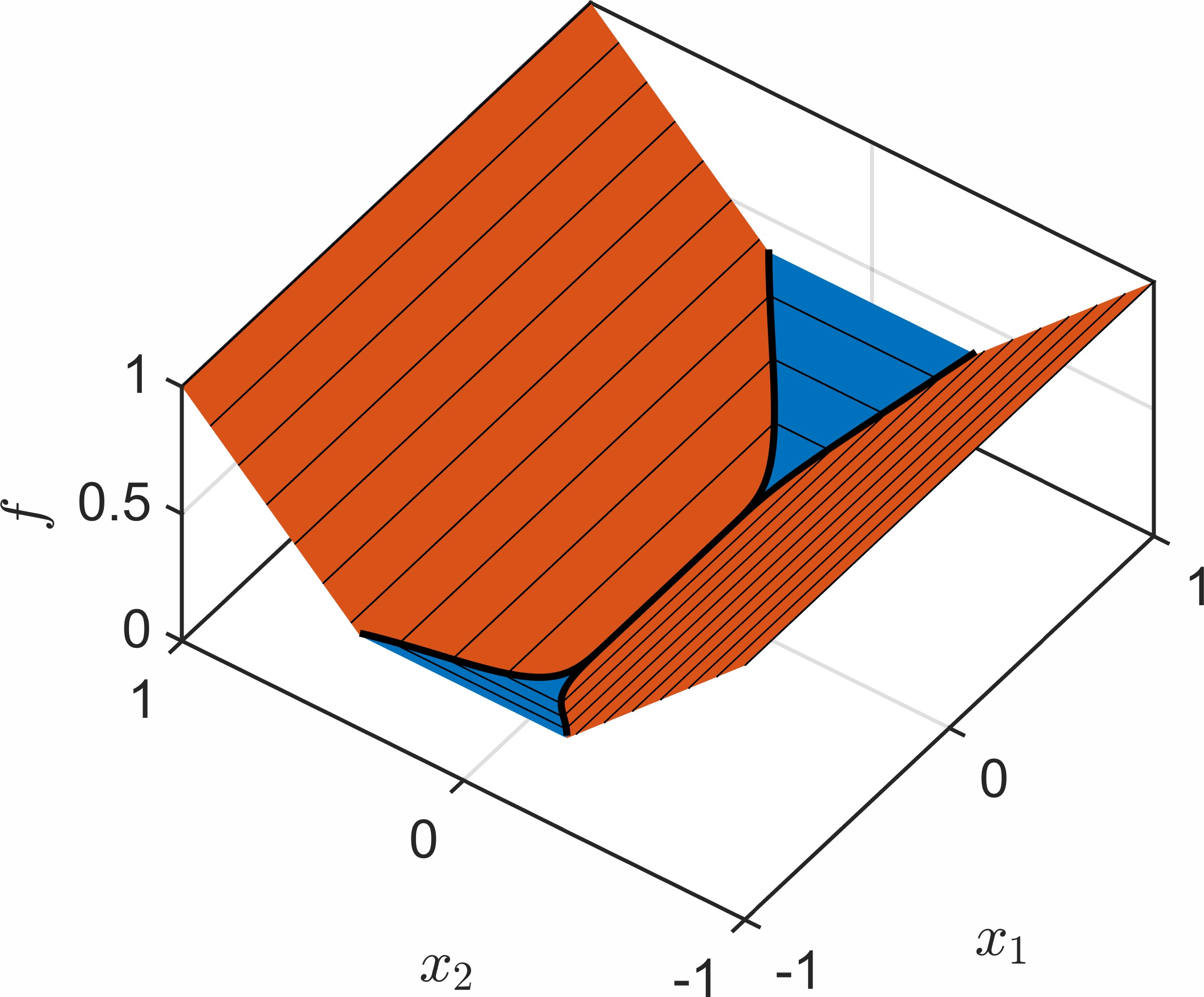}\\
                (a)
    		}
            \parbox[b]{0.49\textwidth}{
                \centering 
                \includegraphics[width=0.4\textwidth]{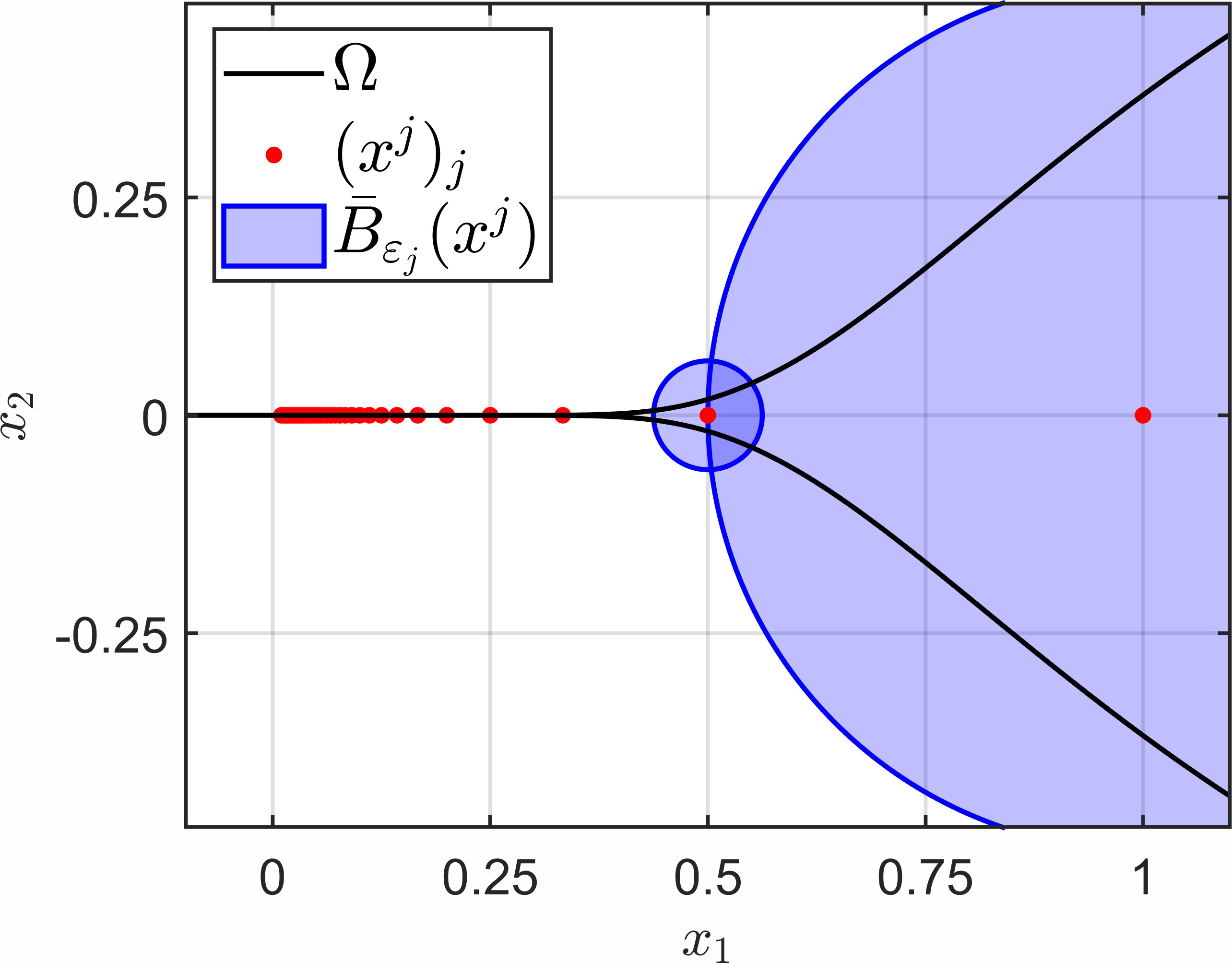}\\
                (b)
    		}
            \caption{(a) The graph of $f$ in Example \ref{example:exponential_decrease} with $\sigma$ as in \eqref{eq:example_exponential_decrease_sigma}. (b) The set of nonsmooth points $\Omega$, the sequence $(x^j)_j$ and the balls $\Bcl_{\eps_j}(x^j)$ in Example \ref{example:exponential_decrease}(ii).}
            \label{fig:example_exponential_decrease}
        \end{figure}%
        (i) Consider sequences $(x^j)_j$, $(\eps_j)_j$ and $(\delta_j)_j$ with
        \begin{align*}
            x^j = (j^{-1}, 0)^\top, \quad \eps_j = 0, \quad \delta_j > 0 \quad \forall j \in \N.
        \end{align*}
        Let $\rho : \R^{\geq 0} \rightarrow \R^{\geq 0}$ be continuous with $\rho(0) = 0$, $\rho(t) > 0$ for all $t > 0$ and $\rho(j) \leq \delta_j$ for all $j \in \N$. By the fundamental theorem of calculus,
        \begin{align*}
            \sigma(t) = \int_0^{t} \rho(1/s) ds
        \end{align*}
        satisfies the above assumptions for $\sigma$ with $\sigma'(t) = \rho(1/t)$ for all $t > 0$. By construction, we have
        \begin{align*}
            \min(\| \partial_{\eps_j} f(x^j) \|) 
            &= \| \{ \nabla f(x^j) \} \| 
            = \| (\sigma'(|x_1^j|),0)^\top \| \\
            &= \| (\rho(j),0)^\top \| 
            = \rho(j)
            \leq \delta_j 
            \quad \forall j \in \N.
        \end{align*}
        As an example, choosing $\rho(j) = \delta_j = 2 j^3 e^{-j^2}$ leads to 
        \begin{align} \label{eq:example_exponential_decrease_sigma}
            \sigma(t) = 
            \begin{cases}
                e^{-1/t^2}, & t > 0, \\
                0, & t = 0.
            \end{cases}
        \end{align}
        (ii) Consider sequences $(x^j)_j$, $(\eps_j)_j$ and $(\delta_j)_j$ with
        \begin{align*}
            x^j = (j^{-1}, 0)^\top, \quad \eps_j > 0, \quad \delta_j = 0 \quad \forall j \in \N.
        \end{align*}
        Let $\sigma$ be a function satisfying the above assumptions and additionally $\sigma(1/j) \leq \eps_j$ for all $j \in \N$. By construction, for all $x \in \R^n$ and $\eps > 0$ with $x_2 = 0$ and $\sigma(|x_1|) \leq \eps$, it holds $(0,1)^\top \in \partial_\eps f(x)$ and $(0,-1)^\top \in \partial_\eps f(x)$, so $\min(\| \partial_\eps f(x) \|) = 0$. In particular, $\min(\| \partial_{\eps_j} f(x^j) \|) = 0 \leq \delta_j$ for all $j \in \N$. As an example, for $\eps_j = 2^{-j^2}$ one may choose $\sigma$ again as in \eqref{eq:example_exponential_decrease_sigma}. A visualization of this case is shown in Figure \ref{fig:example_exponential_decrease}(b).
    \end{example}

    The previous example shows that for any sequences $(\eps_j)_j$ and $(\delta_j)_j$ (with at least one of them being nonzero), we can find a locally Lipschitz continuous function $f$ with a unique global minimum such that a sublinearly converging sequence $(x^j)_j$ satisfies $\min(\| \partial_{\eps_j} f(x^j) \|) \leq \delta_j$ for all $j \in \N$. Case (i) shows behavior that also occurs in the smooth case, since we chose $\eps_j = 0$ and the nonsmoothness of $f$ was irrelevant. Here, the issue is that $\min(\| \partial_{\eps_j} f(x^j) \|) = \| \nabla f(x^j) \|$ vanishes quickly even though $(x^j)_j$ only converges slowly, which is made possible by exponential decay of $f$ around $0$. Case (ii) highlights the additional behavior that exists in the nonsmooth case: Since the $\eps$-subdifferential is defined as the \emph{convex hull} of the subgradients, $\min(\| \partial_\eps f(x) \|)$ may be small (or, as in this example, even zero) although no subgradient at any point in $\Bcl_\eps(x)$ has a small norm. While this may also occur for smooth $f$, it is more prevalent for nonsmooth functions due to the jumps in the subgradients when crossing nonsmooth points in $\Omega$. In a way, this means that the geometry of $\Omega$ is relevant for the convergence behavior of $(x^j)_j$. More precisely, in Figure \ref{fig:example_exponential_decrease}(b), $\Omega$ forms a cusp that shrinks towards $(0,0)^\top$ with exponential speed. Thus, even for sublinearly converging $(x^j)_j$ and Q-superlinearly decreasing $(\eps_j)_j$, $\Bcl_{\eps_j}(x^j)$ may intersect $\Omega$ infinitely many times, which (in this example) implies $\min(\| \partial_{\eps_j} f(x^j) \|) = 0$. Furthermore, since either $(\eps_j)_j$ or $(\delta_j)_j$ were constantly zero in (i) and (ii), this example shows that even arbitrarily fast convergence of either $(\eps_j)_j$ or $(\delta_j)_j$ is not sufficient for fast convergence of $(x^j)_j$. Therefore, the behavior of both sequences has to be considered simultaneously to infer the convergence of $(x^j)_j$. 

    For smooth functions, the above behavior is well-known and can be avoided by assuming a certain rate of growth of $f$ when moving away from the minimum $x^*$, e.g., by assuming positive definiteness of the Hessian matrix at $x^*$. For nonsmooth functions, the following generalized concept can be used (cf. \cite{S1986,KGD2022}):
    \begin{definition} \label{def:order_minimum}
        A point $x^* \in \R^n$ is called a \emph{minimum of order} $p \in \N$ \emph{with constant} $\beta > 0$, if there is some open set $U \subseteq\R^n$ with $x^* \in U$ such that
        \begin{align*}
            f(x) \geq f(x^*) + \beta \| x - x^* \|^p \quad \forall x \in U.
        \end{align*}
    \end{definition}
    Note that a minimum of order $p$ is also a minimum of all orders greater than $p$. In Example \ref{example:exponential_decrease} (with $\sigma$ as in \eqref{eq:example_exponential_decrease_sigma}), it is easy to see that $x^*$ is not a minimum of any order, since $e^{-1/x_1^2}$ decreases faster than $\| x  - x^* \|^p = \| x \|^p$ for any $p \in \N$. Thus, restricting ourselves to functions with minimal points of a certain order allows us to avoid the issues highlighted in this example. However, this restriction is still not enough to be able to infer the speed of convergence of $(x^j)_j$, as the following example shows:

    \begin{example} \label{example:why_semismooth}
        Consider the function
        \begin{align*}
            f : \R \rightarrow \R, \quad x \mapsto 
            \begin{cases}
                x^2 \sin \left( \frac{1}{x} \right) + |x|, & x \neq 0,\\
                0, & x = 0,
            \end{cases}
        \end{align*}
        which is a modified version of a classical non-semismooth function from \cite{M1977}. The graph of $f$ is shown in Figure \ref{fig:example_why_semismooth}(a).
        \begin{figure}
            \centering
            \parbox[b]{0.49\textwidth}{
                \centering 
                \includegraphics[width=0.49\textwidth]{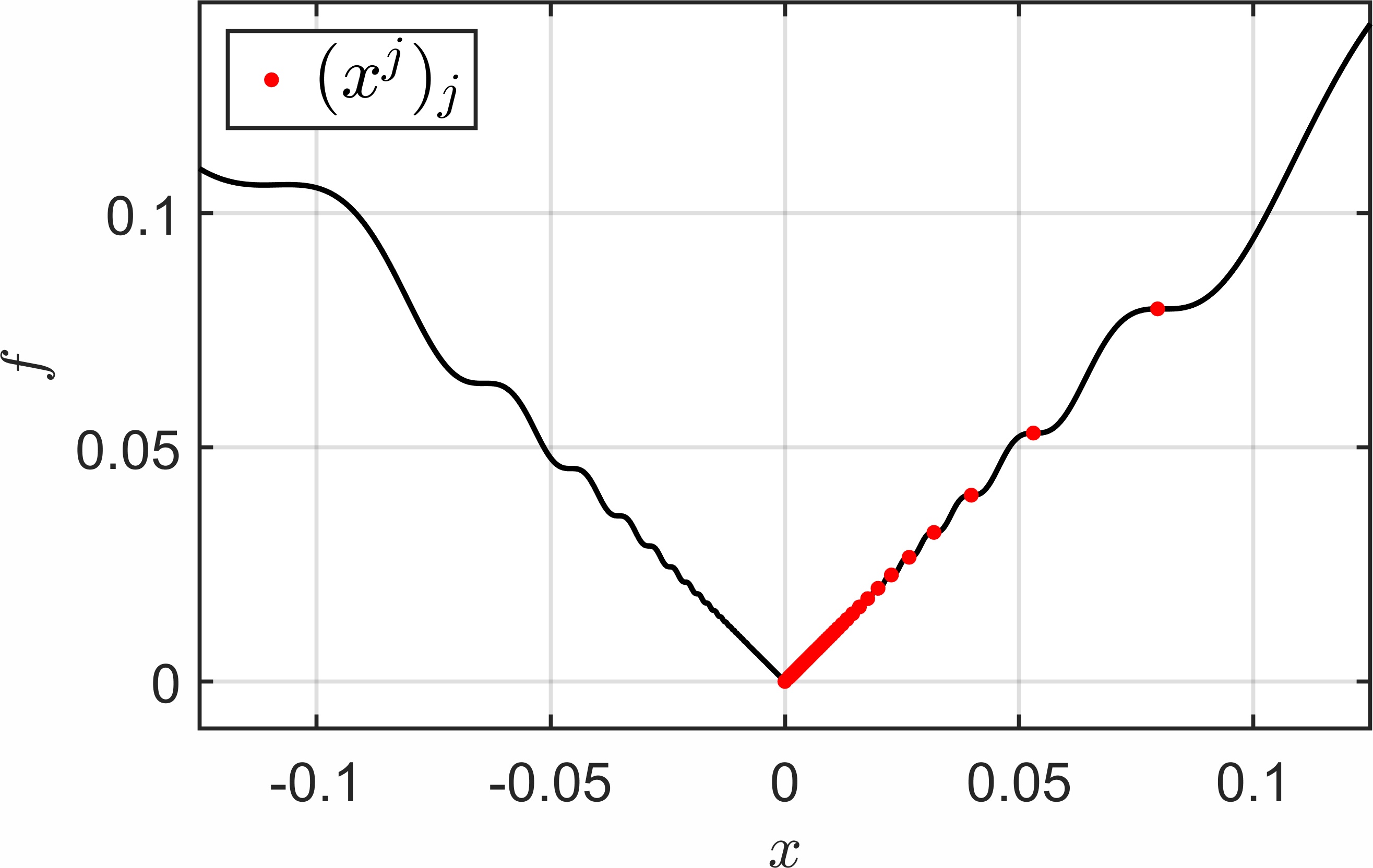}\\
                (a)
    		}
            \parbox[b]{0.49\textwidth}{
                \centering 
                \includegraphics[width=0.4\textwidth]{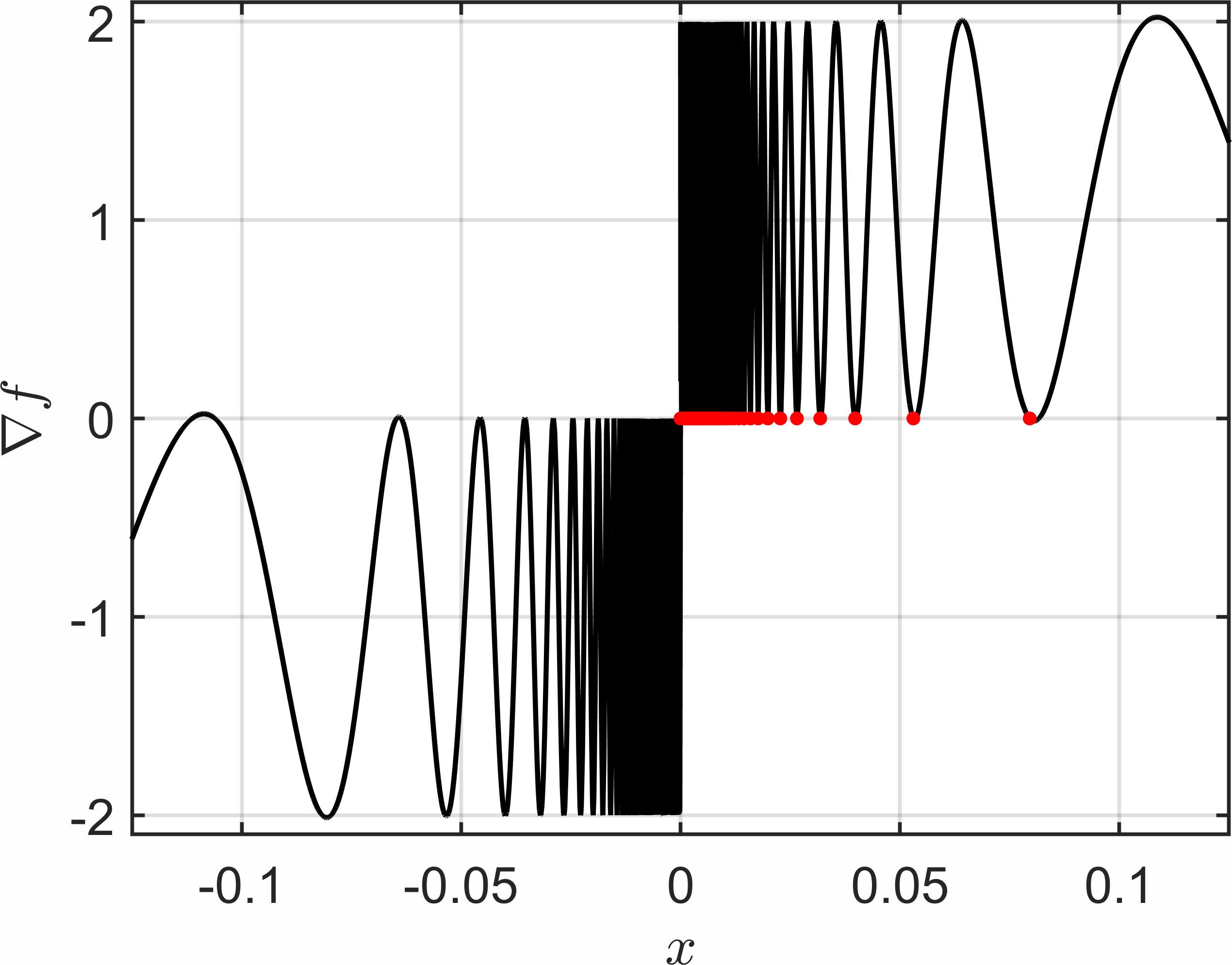}\\
                (b)
    		}
            \caption{The graphs of (a) $f$ and (b) $\nabla f|_{\R \setminus \{ 0 \}}$ and the sequence $(x^j)_j$ in Example \ref{example:why_semismooth}.}
            \label{fig:example_why_semismooth}
        \end{figure}   
        Clearly, $x^* = 0$ is a minimum of order $1$ of $f$ (for $\beta < 1$). Consider the sequences $(x^j)_j$, $(\eps_j)_j$ and $(\delta_j)_j$ given by
        \begin{align*}
            x^j = (2 \pi j)^{-1}, \quad \eps_j = 0, \quad \delta_j = 0 \quad \forall j \in \N.
        \end{align*}
        Since $f$ is continuously differentiable outside of $x^* = 0$, we have
        \begin{align*}
            \partial_{\eps_j} f(x^j)
            &= \{ \nabla f(x^j) \}
            = \left\{ 2 x^j \sin \left( \frac{1}{x^j} \right) - \cos \left( \frac{1}{x^j} \right) + \mathrm{sign}(x^j) \right\} \\
            &= \{ 0 \} \quad \forall j \in \N,
        \end{align*}
        so $\min(\| \partial_{\eps_j} f(x^j) \|) = 0 \leq \delta_j$ for all $j \in \N$, as shown in Figure \ref{fig:example_why_semismooth}(b). However, $(x^j)_j$ again converges sublinearly to $x^*$.
    \end{example}

    Example \ref{example:why_semismooth} highlights another issue we have to address: Even when $x^*$ is a minimum of order $1$, such that $f$ grows linearly around $x^*$, $\min(\| \partial_\eps f(x) \|)$ may be arbitrarily small (or even zero) close to $x^*$. More formally, while the mean value theorem \eqref{eq:mean_value_theorem} implies that for any $x \in U$ (with $U$ as in Definition \ref{def:order_minimum}), there is some $\xi \in \partial f(x^* + s (x - x^*))$ for some $s \in (0,1)$ with
    \begin{align} \label{eq:motiv_mvt}
        0 
        < \beta 
        \leq \frac{f(x) - f(x^*)}{\| x - x^* \|}
        = \frac{\langle \xi, x - x^* \rangle}{\| x - x^* \|}
        \leq \frac{ \| \xi \| \| x - x^* \|}{\| x - x^* \|}
        = \| \xi \|,
    \end{align}
    this does not mean that $\beta$ is a lower bound for $\min(\| \partial f(x) \|)$ around $x^*$. The problem is that in the general case, the above inequality only holds for \emph{some} subgradient at \emph{some} point between $x$ and $x^*$. We will avoid this issue by assuming semismoothness of $f$ (cf. \eqref{eq:def_semismooth}), which implies that for $x^j \rightarrow x^*$ with $\frac{x^j - x^*}{\| x^j - x^* \|} =: d^j \rightarrow d \in \R^n$ and any sequence $(\xi^j)_j$ with $\xi^j \in \partial f(x^j)$, it holds
    \begin{align*}
        0 
        < \beta 
        \leq f'(x^*,d)
        = \lim_{j \rightarrow \infty}  \langle \xi^j, d^j \rangle
        \leq \liminf_{j \rightarrow \infty}  \| \xi^j \| \| d^j \|
        = \liminf_{j \rightarrow \infty}  \| \xi^j \|.
    \end{align*}
    This shows that $\langle \xi^j, d^j \rangle$ and, in particular, $\min(\| \partial f(x^j) \|)$, are bounded below close to $x^*$. We will later show that if $(\eps_j)_j$ decreases quickly in relation to $(\| x^j - x^*\|)_j$, then even for $\xi^j \in \partial f_{\eps_j}(x^j)$, $\langle \xi^j, d^j \rangle$ is still bounded below. This will be a key argument in the proof of Theorem \ref{thm:xj_speed_higher_order} for deriving the relationship between $(x^j)_j$, $(\eps_j)_j$ and $(\delta_j)_j$ in Section \ref{sec:analysis_speed_of_convergence}.

    Unfortunately, while semismoothness is sufficient for avoiding the issue highlighted in the previous example, where the minimum is of order $1$, it is not sufficient for minima of higher orders, as our final example in this section shows:

    \begin{example} \label{example:why_higher_order_semismooth}
        For $p \in \N$ consider the function
        \begin{align*}
            f : \R \rightarrow \R, \quad x \mapsto
            \begin{cases}
                 x^{p+1} \sin \left( \frac{1}{x} \right) + \frac{1}{p} |x|^p, & x \neq 0, \\
                 0, & x = 0.
            \end{cases}
        \end{align*}
        It is easy to see that $x^* = 0$ is a minimum of order $p$. If $p \geq 2$ then $f$ is continuously differentiable (and, in particular, semismooth) with $\nabla f(0) = 0$ and
        \begin{align*}
            \nabla f(x) = x^{p-1} \left( (p+1) x \sin \left( \frac{1}{x} \right) - \cos\left( \frac{1}{x} \right)  + \mathrm{sign}(x) \right) \quad \forall x \in \R \setminus \{ 0 \}.
        \end{align*}
        With the same sequences $(x^j)_j$, $(\eps_j)_j$ and $(\delta_j)_j$ as in Example \ref{example:why_semismooth}, we have $\min(\| \partial_{\eps_j} f(x^j) \|) = 0 \leq \delta_j$ for all $j \in \N$. The graphs of $f$ and $\nabla f$ are shown in Figure \ref{fig:example_why_higher_order_semismooth}(a) and (b), respectively.
        \begin{figure}
            \centering
            \parbox[b]{0.32\textwidth}{
                \centering 
                \includegraphics[width=0.32\textwidth]{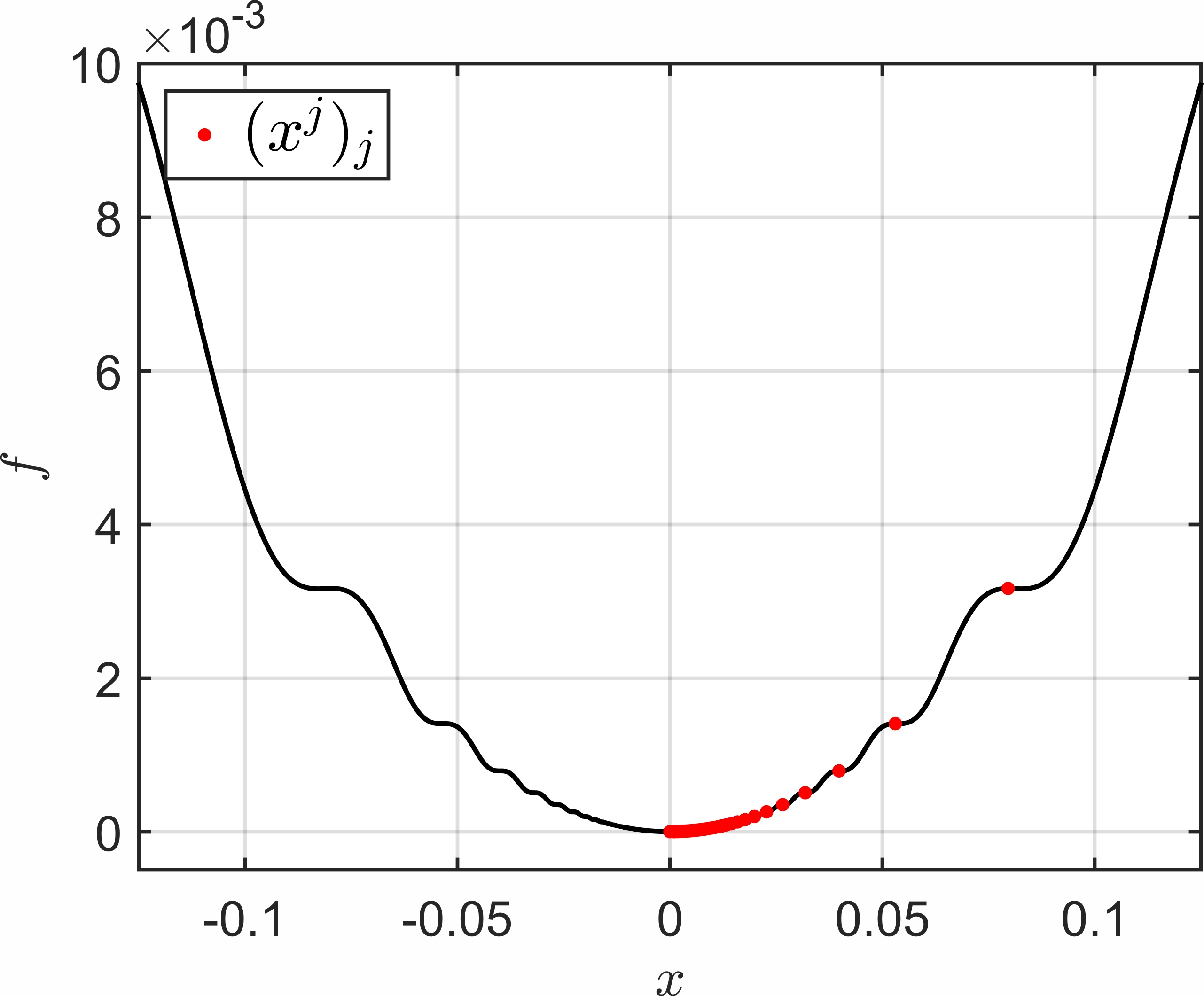}\\
                (a)
    		}
            \parbox[b]{0.32\textwidth}{
                \centering 
                \includegraphics[width=0.32\textwidth]{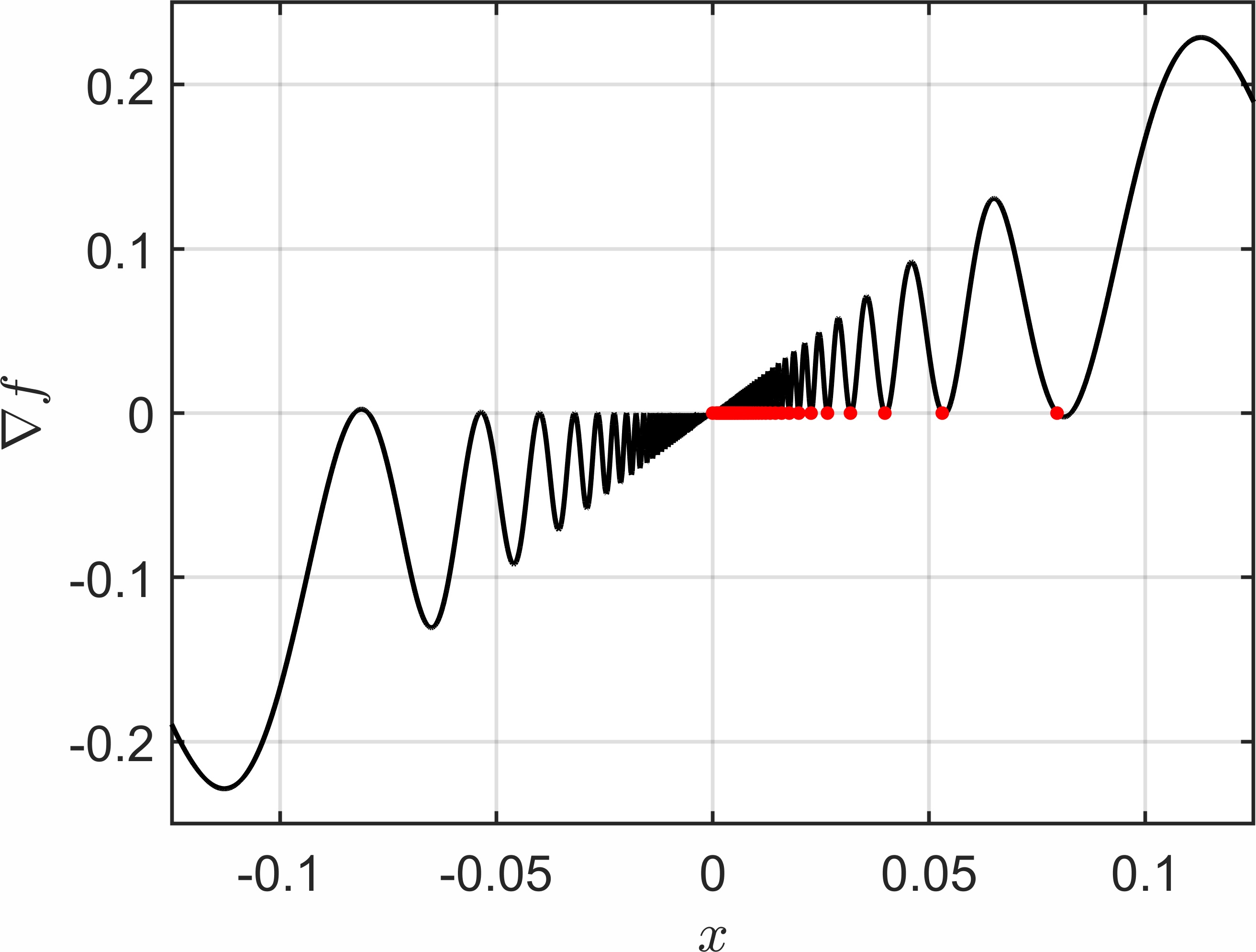}\\
                (b)
    		}
            \parbox[b]{0.32\textwidth}{
                \centering 
                \includegraphics[width=0.31\textwidth]{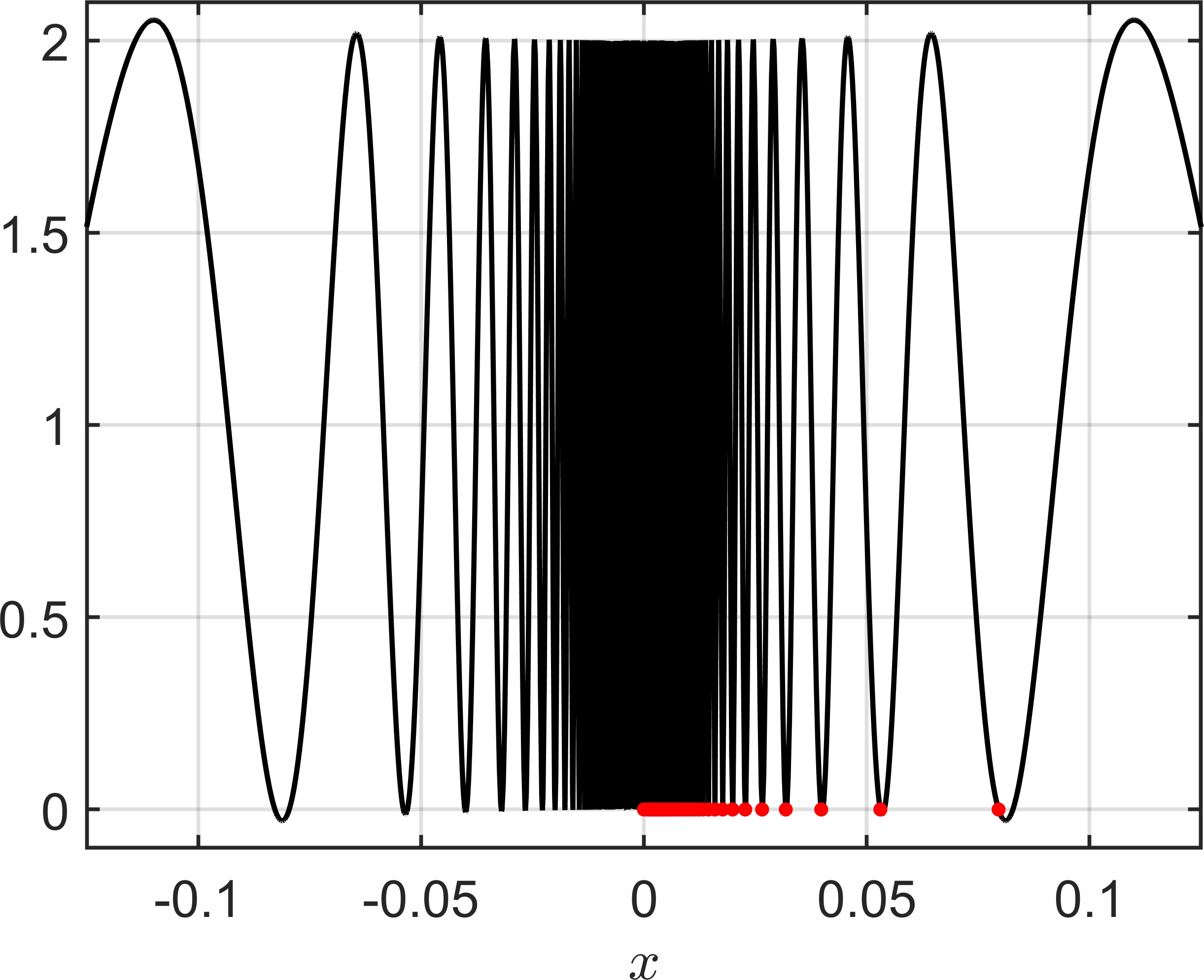}\\
                (c)
    		}
            \caption{(a) The graph of $f$ and the sequence $(x^j)_j$ in Example \ref{example:why_higher_order_semismooth}. (b) The graph of $\nabla f$. (c) The graph of $x \mapsto \nabla f(x) / |x - x^*|$.}
            \label{fig:example_why_higher_order_semismooth}
        \end{figure}   
    \end{example}

    For minima of order $p \geq 2$, we can argue analogously to \eqref{eq:motiv_mvt} to obtain
    \begin{align} \label{eq:motiv_higher_order_semismooth}
        0 
        < \beta 
        \leq \frac{f(x) - f(x^*)}{\| x - x^* \|^p}
        = \frac{\langle \xi, x - x^* \rangle}{\| x - x^* \|^p}
        = \frac{\langle \xi, \frac{x - x^*}{\| x - x^* \|} \rangle}{\| x - x^* \|^{p-1}}
        \leq \frac{ \| \xi \|}{\| x - x^* \|^{p-1}}.
    \end{align}
    If this inequality would hold locally around $x^*$ and for any $\xi \in \partial f(x)$, then $\| \xi \|$ would have at least $p-1$ order growth. Unfortunately, Example \ref{example:why_higher_order_semismooth} shows that semismoothness is not sufficient for this. More precisely, Figure \ref{fig:example_why_higher_order_semismooth}(c) shows that for $x > 0$, we may have
    \begin{align*}
        \frac{\langle \xi^j, \frac{x^j - x^*}{\| x^j - x^* \|} \rangle}{\| x^j - x^* \|^{p-1}}
        = \frac{\nabla f(x^j)}{|x^j|} = 0 \quad \forall j \in \N.
    \end{align*}
    Thus, for minima of order $p \geq 2$, we need a ``higher-order'' version of semismoothness, which we explore in the next section.

\section{Higher-order semismoothness property} \label{sec:higher_order_semismooth}

    In the previous section, we showed the need for a higher-order semismoothness property to be able to derive the speed of convergence of $(x^j)_j$ from $(\eps_j)_j$ and $(\delta_j)_j$. More precisely, motivated by \eqref{eq:motiv_higher_order_semismooth}, for a minimum $x^* \in \R^n$ of order $p \in \N$ with constant $\beta > 0$, we require that
    \begin{align} \label{eq:p_order_min_semismooth}
        \liminf_{\xi \in \partial f(x^* + t d'), t \searrow 0, d' \rightarrow d} \frac{\langle \xi,  d' \rangle}{t^{p-1}} \geq \beta \| d \|^p \quad \forall d \in \R^n \setminus \{ 0 \}.
    \end{align}
    (For ease of notation, compared to \eqref{eq:motiv_higher_order_semismooth}, we substituted $x - x^* = td'$. Furthermore, we consider the limit inferior as an element of $\R \cup \{ -\infty, \infty \}$.) In this section, we show that for $p = 1$, this inequality holds for semismooth functions, and for $p \geq 1$, it holds for piecewise differentiable functions that have a certain convexity property. Unfortunately, general nonconvex piecewise differentiable functions do not satisfy \eqref{eq:p_order_min_semismooth}, which we will demonstrate in an example. 

    Note that for minima of order $p = 1$, the denominator on the left-hand side of \eqref{eq:p_order_min_semismooth} simply becomes $1$, so we immediately obtain the following result:
    \begin{lemma} \label{lem:semismooth_order_1}
        If $f : \R^n \rightarrow \R$ is semismooth and $x^*$ is a minimum of order $p = 1$, then \eqref{eq:p_order_min_semismooth} holds.
    \end{lemma}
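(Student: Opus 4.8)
The plan is to show that Lemma \ref{lem:semismooth_order_1} follows almost immediately by comparing the semismoothness limit \eqref{eq:def_semismooth} with the $p=1$ case of the required inequality \eqref{eq:p_order_min_semismooth}, and then extracting the lower bound $\beta \| d \|$ from the order-$1$ growth condition via the mean value theorem. First I would observe that for $p=1$ the denominator $t^{p-1} = t^0 = 1$, so the left-hand side of \eqref{eq:p_order_min_semismooth} reduces to $\liminf$ of $\langle \xi, d' \rangle$ over the same directional limiting process that appears in the definition of semismoothness \eqref{eq:def_semismooth}. Since $f$ is semismooth by hypothesis, that limit exists and equals the directional derivative $f'(x^*, d)$, so the $\liminf$ is in fact a genuine limit equal to $f'(x^*, d)$.

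The key step is then to produce the lower bound $f'(x^*, d) \geq \beta \| d \|$ for every direction $d \neq 0$. I would do this by evaluating the directional derivative along the ray $x^* + t d$ for small $t > 0$ and invoking the order-$1$ growth condition from Definition \ref{def:order_minimum}: for $t$ small enough that $x^* + t d \in U$ we have $f(x^* + td) - f(x^*) \geq \beta \| td \| = \beta t \| d \|$, so that
\begin{align*}
    f'(x^*, d) = \lim_{t \searrow 0} \frac{f(x^* + t d) - f(x^*)}{t} \geq \lim_{t \searrow 0} \frac{\beta t \| d \|}{t} = \beta \| d \|.
\end{align*}
Combining this with the identification of the $\liminf$ as $f'(x^*,d)$ from the previous paragraph yields exactly the desired inequality $\liminf \langle \xi, d' \rangle \geq \beta \| d \| = \beta \| d \|^p$ for $p=1$, completing the argument.

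I do not expect any serious obstacle here, since the statement is essentially a bookkeeping exercise that the order-$1$ case is the ``trivial'' case flagged in the surrounding text; the only point requiring a little care is making sure the semismoothness limit in \eqref{eq:def_semismooth} truly coincides with the constrained $\liminf$ in \eqref{eq:p_order_min_semismooth}. Both involve $\xi \in \partial f(x^* + t d')$ with $t \searrow 0$ and $d' \rightarrow d$, so that coincidence is immediate, and the existence of the limit (guaranteed by semismoothness) lets me replace $\liminf$ by $\lim$ without loss. The mild subtlety worth noting is that the growth estimate only holds on the neighborhood $U$, but since we take $t \searrow 0$ this is harmless, as $x^* + td \in U$ for all sufficiently small $t$.
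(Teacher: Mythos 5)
Your argument is correct and coincides with the paper's own proof: identify the $\liminf$ in \eqref{eq:p_order_min_semismooth} (with $t^{p-1}=1$) as the semismoothness limit $f'(x^*,d)$, then bound $f'(x^*,d) \geq \beta \|d\|$ directly from the order-$1$ growth condition along the ray $x^* + td$. The passing mention of the mean value theorem in your opening sentence is unnecessary (and unused); the rest is exactly the paper's reasoning.
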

    \begin{proof}
        By semismoothness of $f$ we have
        \begin{align*}
            \lim_{\xi \in \partial f(x^* + t d'), t \searrow 0, d' \rightarrow d} \langle \xi, d' \rangle 
            &= f'(x^*,d) 
            = \lim_{t \searrow 0} \frac{f(x^* + t d) - f(x^*)}{t} \\
            &\geq \lim_{t \searrow 0} \frac{\beta \| t d \|}{t}
            = \beta \| d \|
        \end{align*}
        for all $d \in \R^n$.
    \end{proof}

    The proof of Lemma \ref{lem:semismooth_order_1} is based on the observation that for $p = 1$, the right-hand side of \eqref{eq:p_order_min_semismooth} is a lower bound for the directional derivative $f'(x^*,d)$ which, in turn, is equal to the left-hand side due to semismoothness. For $p \geq 2$, we follow a similar strategy. To this end, we employ the following higher-order directional derivative \cite{S1986,GK1992}:

    \begin{definition}
        Let $x \in \R^n$, $d \in \R^n$ and $p \in \N$. Then
        \begin{align*}
            \ldDH^p f(x,d) := \liminf_{t \searrow 0, d' \rightarrow d} \frac{f(x + td') - f(x)}{t^p} \in \R \cup \{ -\infty, \infty \}
        \end{align*}
        is called the $p$\emph{-order lower (Dini-Hadamard) directional derivative} of $f$ at $x$ in the direction $d$.
    \end{definition}

    Clearly, if $x^*$ is a local minimum of $f$, then $\ldDH^p f(x^*,d) \geq 0$ for all $d \in \R^n$, $p \in \N$. Moreover, as in the first-order case, if $x^*$ is a minimum of order $p \in \N$, then the right-hand side of \eqref{eq:p_order_min_semismooth} is a lower bound for $\ldDH^p f(x^*,d)$:
    \begin{lemma} \label{lem:min_order_dir_deriv}
        Let $f : \R^n \rightarrow \R$. If $x^*$ is a minimum of order $p \in \N$ with constant $\beta > 0$, then 
        \begin{align*}
            \ldDH^p f(x^*,d) \geq \beta \| d \|^p \quad \forall d \in \R^n.
        \end{align*}
    \end{lemma}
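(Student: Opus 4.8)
The plan is to substitute the defining inequality of a minimum of order $p$ directly into the difference quotient appearing in the definition of $\ldDH^p f(x^*,d)$. Since $x^*$ is a minimum of order $p$ with constant $\beta$, there is an open set $U \ni x^*$ with $f(x) \geq f(x^*) + \beta \| x - x^* \|^p$ for all $x \in U$. Evaluating this at $x = x^* + t d'$ and rearranging yields, whenever $x^* + t d' \in U$ and $t > 0$,
\begin{align*}
    \frac{f(x^* + t d') - f(x^*)}{t^p} \geq \frac{\beta \| t d' \|^p}{t^p} = \beta \| d' \|^p.
\end{align*}
Thus the entire task reduces to controlling the domain: I must ensure that the argument $x^* + t d'$ remains inside $U$ along the limiting process, and then pass to the $\liminf$.

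For the domain control, since $U$ is open and contains $x^*$, I would fix a radius $\eta > 0$ with $\Bcl_\eta(x^*) \subseteq U$. Restricting the double limit to directions $d'$ with $\| d' \| \leq \| d \| + 1$ (which is harmless, as only $d'$ near $d$ are relevant for the $\liminf$) and to $t \leq \eta / (\| d \| + 1)$, I obtain $\| t d' \| = t \| d' \| \leq \eta$, so that $x^* + t d' \in \Bcl_\eta(x^*) \subseteq U$. Hence the displayed inequality holds for all sufficiently small $t$ and all $d'$ sufficiently close to $d$, i.e., on a tail of the net over which the $\liminf$ is computed.

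Finally, taking $\liminf_{t \searrow 0, d' \rightarrow d}$ on both sides and using that the lower bound $\beta \| d' \|^p$ converges to $\beta \| d \|^p$ as $d' \rightarrow d$ by continuity of the norm, I conclude $\ldDH^p f(x^*,d) \geq \beta \| d \|^p$, which also covers the case $d = 0$ since then the right-hand side is $0$. The argument is essentially elementary; the only genuine point of care is the joint nature of the limit in $t$ and $d'$, which is why the uniform bound $\| d' \| \leq \| d \| + 1$ is needed to guarantee $x^* + t d' \in U$ uniformly for small $t$. This is the single step on which the otherwise routine calculation hinges.
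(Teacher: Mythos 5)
Your proposal is correct and follows essentially the same route as the paper's proof: substitute the growth inequality into the difference quotient and pass to the limit inferior. The only difference is that you make explicit the (routine) domain control ensuring $x^* + t d' \in U$ for small $t$, which the paper leaves implicit.
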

    \begin{proof}
        It holds 
        \begin{align*}
            \ldDH^p f(x^*,d) 
            &= \liminf_{t \searrow 0, d' \rightarrow d} \frac{f(x^* + td') - f(x^*)}{t^p} 
            \geq \liminf_{t \searrow 0, d' \rightarrow d} \frac{\beta \| t d' \|^p}{t^p} \\
            &= \liminf_{t \searrow 0, d' \rightarrow d} \beta \| d' \|^p
            = \beta \| d \|^p
        \end{align*}
        for all $d \in \R^n$.
    \end{proof}    

    It remains to analyze when 
    \begin{align} \label{eq:higher_order_semismooth}
        \liminf_{\xi \in \partial f(x^* + t d'), t \searrow 0, d' \rightarrow d} \frac{\langle \xi,  d' \rangle}{t^{p-1}}
        \geq \ldDH^p f(x^*,d) \quad \forall d \in \R^n \setminus \{ 0 \}
    \end{align}
    holds (for a minimum $x^*$ of order $p$), which can be interpreted as a form of higher-order semismoothness. (Note that if the left-hand side of this inequality equals $\infty$, then \eqref{eq:p_order_min_semismooth} automatically holds.) Before showing that a certain class of convex, piecewise differentiable functions possesses this property, we briefly discuss the relationship of \eqref{eq:higher_order_semismooth} to an existing concept of higher-order semismoothness:
    
    \begin{remark}
        (a) In \cite{QS1993}, a locally Lipschitz continuous function $f$ is called $q$\emph{-order semismooth} at $x^* \in \R^n$ for $q \in (0,1]$, if it is directionally differentiable at $x^*$ and
        \begin{align*}
            \langle \xi, h \rangle - f'(x^*,h) = O(\| h \|^{1+q}) \quad \text{for} \ h \rightarrow 0, \xi \in \partial f(x + h). 
        \end{align*}
        (For $q = 1$, this notion also appeared in \cite{Q1993}, Lemma 2.3, and was renamed to \emph{strong semismoothness} in \cite{QH1997}.) In light of \eqref{eq:higher_order_semismooth}, for $h = t d'$ this leads to
        \begin{align*}
            \frac{\langle \xi, d' \rangle}{t^{p-1}}
            = \frac{\langle \xi, h \rangle}{t^p}
            = \frac{f'(x^*,d')}{t^{p-1}} +  \frac{O(\| t d' \|^{1+q})}{t^p}
            = \frac{f'(x^*,d')}{t^{p-1}} + \frac{O(t^{1+q})}{t^p}.
        \end{align*}
        Unfortunately, $\ldDH^p f(x^*,d)$ is not a lower bound for the right-hand side of this equality. For example, for $p = 2$, $q = 1$ and $f$ as in Example \ref{example:why_higher_order_semismooth}, it holds $f'(0,h) = 0$ for all $h \in \R$ and
        \begin{align*}
            \langle \xi, h \rangle - f'(0,h)
            &= \langle \nabla f(h), h \rangle \\
            &= h^2 \left( 2 h \sin \left( \frac{1}{h} \right) - \cos\left( \frac{1}{h} \right)  + \mathrm{sign}(h) \right)
            = O(h^2)
        \end{align*}
        for $h \rightarrow 0$, so $f$ is $1$-order semismooth at $x^* = 0$ in the sense of \cite{QS1993}. However, as shown in Example \ref{example:why_higher_order_semismooth}, $f$ does not satisfy \eqref{eq:higher_order_semismooth}.\\
        (b) Considering that both sides of \eqref{eq:higher_order_semismooth} contain the limit \emph{inferior}, a proper name for this property might have to include the prefix \emph{lower} or \emph{upper}, similar to the concept of \emph{weak upper semismoothness} in \cite{M1977b}. However, since this would still not be entirely consistent for $p = 1$, we refrain from introducing a fixed name for property \eqref{eq:higher_order_semismooth} in this article, and instead simply regard it as some form of higher-order semismoothness.
    \end{remark}

    The difficulty of verifying condition \eqref{eq:higher_order_semismooth} comes from the fact that it implicitly contains higher-order derivatives of $f$: If $f$ would be twice continuously differentiable, then for $p = 2$, Taylor expansion of $f$ and $\nabla f$ in the minimum $x^*$ would yield
    \begin{align*}
        &\ldDH^p f(x^*,d) = \liminf_{t \searrow 0, d' \rightarrow d} \frac{f(x^* + td') - f(x^*)}{t^2} \\
        &= \liminf_{t \searrow 0, d' \rightarrow d} \frac{f(x^*) + t \langle \nabla f(x^*), d' \rangle + \frac{1}{2} t^2 d'^\top \nabla^2 f(x^*) d' + o(\| t d' \|^2) - f(x^*)}{t^2} \\
        &= \liminf_{t \searrow 0, d' \rightarrow d} \frac{1}{2} d'^\top \nabla^2 f(x^*) d' + \frac{o(\| t d' \|^2)}{t^2} 
        = \frac{1}{2} d^\top \nabla^2 f(x^*) d
    \end{align*}
    and
    \begin{align*}
        \frac{\langle \xi, d' \rangle}{t} 
        &= \frac{\langle \nabla f(x^*+  t d'), d' \rangle}{t} 
        = \frac{\langle \nabla f(x^*) + \nabla^2 f(x^*)(t d') + o(\| t d' \|), d' \rangle}{t} \\
        &= d'^\top \nabla^2 f(x^*) d' + \frac{o(\| t d' \|)}{t} d' 
        \rightarrow d^\top \nabla^2 f(x^*) d \quad \text{as} \quad t \searrow 0, d' \rightarrow d.
    \end{align*}
    So for a twice continuously differentiable function, \eqref{eq:higher_order_semismooth} holds if the Hessian in $x^*$ is positive semidefinite. While this simple relationship is lost when $f$ is nonsmooth, we can still rely on some form of higher-order derivatives when restricting ourselves to piecewise differentiable functions. More precisely, as above, we will compute Taylor expansions of all smooth pieces separately and on both sides of \eqref{eq:higher_order_semismooth} and then show that the piecewise nature of $f$ does not cause any issues.

    To state and prove the main result of this section, we first need some additional notation. For a $p$-times continuously differentiable function $f$ and $k \in \{1,\dots,p\}$, we denote
    \begin{align*}
        &\deriv^{(k)} f(x)(d)^k 
        := \sum_{i_1 = 1}^n \dots \sum_{i_k = 1}^n \partial_{i_1} \dots \partial_{i_k} f(x) d_{i_1} \dots d_{i_k}, \\
        &T_p f(y,x) := \sum_{k = 0}^p \frac{1}{k!} \deriv^{(k)} f(x) (y - x)^k.
    \end{align*}
    Then Taylor expansion of $f$ (see, e.g., \cite{K2004}, p. 66) yields $f(y) = T_p f(y,x) + o(\| y - x \|^p)$. Furthermore, for a piecewise differentiable function $f$ (cf. Section \ref{sec:basics}) with selection functions $f_1, \dots, f_m$, let
    \begin{align*}
        C_i(x) := \{ d \in \R^n : \ &\exists (d^j)_j \in \R^n, (t_j)_j \in \R^{> 0} \text{ with } d^j \rightarrow d, t_j \rightarrow 0\\
        &\text{ and } i \in A(x + t_j d^j) \ \forall j \in \N\}
    \end{align*}
    for $i \in \{1,\dots,m\}$. In words, $C_i(x)$ is the cone of directions at $x$ in which $f$ admits the value of $f_i$.

    \begin{lemma} \label{lem:convex_higher_order}
        Let $x^*$ be a minimum of order $p \in \N$ with constant $\beta > 0$ and $f$ be piecewise $p$-times continuously differentiable with selection functions $f_1, \dots, f_m$, $m \in \N$. If for every $i \in \{1,\dots,m\}$ there is an open set $V_i \subseteq \R^n$ with $C_i(x^*) \setminus \{ 0 \} \subseteq V_i$ and
        \begin{align} \label{eq:higher_order_pos_def}
            \deriv^{(k)} f_i(x^*)(d)^k \geq 0 \quad \forall k \in \{2,\dots,p\}, d \in V_i,
        \end{align}
        then \eqref{eq:higher_order_semismooth} and, in particular, \eqref{eq:p_order_min_semismooth} holds.
    \end{lemma}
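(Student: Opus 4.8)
The plan is to prove the stronger inequality \eqref{eq:higher_order_semismooth}, from which \eqref{eq:p_order_min_semismooth} follows immediately via Lemma \ref{lem:min_order_dir_deriv}. Fix $d \in \R^n \setminus \{0\}$ and choose sequences $t_j \searrow 0$, $d'_j \rightarrow d$ and $\xi^j \in \partial f(x^* + t_j d'_j)$ along which the quotient $\langle \xi^j, d'_j \rangle / t_j^{p-1}$ converges to the value of the limit inferior on the left-hand side of \eqref{eq:higher_order_semismooth}; write $x^j := x^* + t_j d'_j$. Since there are only finitely many possible active sets, I would first pass to a subsequence along which $A(x^j)$ equals a fixed set $I$. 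For each $i \in I$, the sequences $(d'_j)_j$ and $(t_j)_j$ then witness $d \in C_i(x^*)$, so that $d \in C_i(x^*) \setminus \{0\} \subseteq V_i$; moreover, continuity of $f$ and $f_i$ together with $f(x^j) = f_i(x^j)$ yields $f_i(x^*) = f(x^*)$. Using $\partial f(x^j) \subseteq \conv(\{ \nabla f_i(x^j) : i \in I \})$, I write $\xi^j = \sum_{i \in I} \lambda_i^j \nabla f_i(x^j)$ with $\lambda_i^j \geq 0$ and $\sum_{i \in I} \lambda_i^j = 1$.

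The heart of the argument is a termwise comparison of two Taylor expansions. For each fixed $i \in I$, expanding $f_i$ to order $p$ and $\nabla f_i$ to order $p-1$ at $x^*$ (both legitimate since $f_i$ is $p$-times continuously differentiable) and dividing by $t_j^p$ and $t_j^{p-1}$ respectively gives
\[
\frac{\langle \nabla f_i(x^j), d'_j \rangle}{t_j^{p-1}} - \frac{f_i(x^j) - f(x^*)}{t_j^p} = \sum_{l = 2}^p \frac{l-1}{l!} \, t_j^{l-p} \, \deriv^{(l)} f_i(x^*)(d'_j)^l + o(1),
\]
because the constant terms cancel through $f_i(x^*) = f(x^*)$ and, crucially, the first-order ($l = 1$) terms carry identical coefficients and cancel exactly. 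Since $V_i$ is open and $d'_j \rightarrow d \in V_i$, we have $d'_j \in V_i$ for all large $j$, so condition \eqref{eq:higher_order_pos_def} makes every summand on the right nonnegative and therefore $\langle \nabla f_i(x^j), d'_j \rangle / t_j^{p-1} \geq ( f_i(x^j) - f(x^*) ) / t_j^p + o(1)$.

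To conclude, I use $f_i(x^j) = f(x^j)$ for $i \in I$ to replace $f_i(x^j) - f(x^*)$ by the $i$-independent quantity $f(x^j) - f(x^*)$; taking the convex combination over the finite set $I$ (so that the weights sum to one and the $o(1)$ terms survive) yields $\langle \xi^j, d'_j \rangle / t_j^{p-1} \geq ( f(x^j) - f(x^*) ) / t_j^p + o(1)$. Passing to the limit, the left-hand side tends to the chosen limit inferior, while $\liminf_j ( f(x^j) - f(x^*) ) / t_j^p \geq \ldDH^p f(x^*, d)$ holds by the very definition of the $p$-order lower directional derivative, which establishes \eqref{eq:higher_order_semismooth}.

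The main obstacle is the first-order term $t_j^{1-p} \langle \nabla f_i(x^*), d'_j \rangle$, which blows up as $t_j \searrow 0$ and whose sign is not controlled by any hypothesis. The key observation that disarms it is that this dangerous term appears with the same coefficient in both expansions and thus cancels in the comparison, leaving only the higher-order terms that are precisely the ones governed by \eqref{eq:higher_order_pos_def}. Comparing $\langle \nabla f_i, d' \rangle / t^{p-1}$ directly against the scalar quotient $( f_i - f(x^*) ) / t^p$, rather than attempting to bound it in isolation, is what makes this cancellation visible. A secondary technical point is that the sign condition must be available at the perturbed directions $d'_j$ and not merely at $d$ itself, which is exactly why the hypothesis requests an open neighborhood $V_i$ of $C_i(x^*) \setminus \{0\}$ instead of the inequality on the cone alone.
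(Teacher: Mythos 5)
Your proof is correct and follows essentially the same route as the paper's: Taylor expansion of $f_i$ and of $\nabla f_i$ at $x^*$, exact cancellation of the first-order terms and nonnegativity of the $k\geq 2$ terms via \eqref{eq:higher_order_pos_def}, then a convex-combination argument over the active gradients and the definition of $\ldDH^p f(x^*,d)$. The only differences are presentational (you subtract the two expansions and pass to a subsequence with constant active set, whereas the paper compares the coefficients $1/k!$ and $1/(k-1)!$ termwise and tracks the index sets $J(i)$), and both are sound.
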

    \begin{proof}
        We begin by choosing explicit sequences for the limit inferior in \eqref{eq:higher_order_semismooth}:
        Let $(d^j)_j \in \R^n$, $(t_j)_j \in \R^{> 0}$ and $(\xi^j)_j \in \R^n$ with $t_j \rightarrow 0$, $d^j \rightarrow d \neq 0$ and $\xi^j \in \partial f(x^* + t_j d^j)$ for all $j \in \N$. For ease of notation, let $x^j := x^* + t_j d^j$. For $U \subseteq \R^n$ as in Definition \ref{def:order_minimum}, assume w.l.o.g. that $x^j \in U$ for all $j \in \N$.\\
        \textbf{Step 1:} Assume w.l.o.g. that any $i \in A(x^j)$ for any $j \in \N$ is active infinitely many times along $(x^j)_j$, and let $I \subseteq \{1,\dots,m\}$ be the set of such indices. Then $I \subseteq A(x^*)$ (due to continuity of $f$) and by definition of $C_i(x^*)$, we have
        \begin{align*}
            d \in \bigcap_{i \in I} ( C_i(x^*) \setminus \{ 0 \} ) \subseteq \bigcap_{i \in I} V_i.
        \end{align*}
        Since the right-hand side is open and $d^j \rightarrow d$, we can assume w.l.o.g. that $d^j \in V_i$ for all $i \in I$, $j \in \N$. \\
        \textbf{Step 2:} For $j \in \N$ and $i \in \{1,\dots,m\}$ let
        \begin{align*}
            \varphi : \R^n \rightarrow \R, \quad x \mapsto \langle \nabla f_i(x), d^j \rangle.
        \end{align*}
        Then $\varphi$ is $(p-1)$-times continuously differentiable by assumption. Taylor expansion of $\varphi$ at $x^*$, combined with the identity
        \begin{align*}
            \deriv^{(k)} \varphi(x^*)(t_j d^j)^k = t_j^{k} \deriv^{(k+1)} f_i(x^*)(d^j)^{k+1} \quad \forall k \in \{1,\dots,p-1\},
        \end{align*}
        yields
        \begin{equation} \label{eq:proof_lem_PC_slope_growth_convex_2}
            \begin{aligned}
                \frac{\langle \nabla f_i(x^j), d^j \rangle}{t_j^{p-1}}
                &= t_j^{-(p-1)} \varphi(x^* + t_j d^j) \\
                &= t_j^{-(p-1)} T_{p-1} \varphi(x^* + t_j d^j,x^*) + \frac{o(\| t_j d^j \|^{p-1})}{t_j^{p-1}} \\
                &= t_j^{-(p-1)} \sum_{k = 0}^{p-1} \frac{1}{k!} \deriv^{(k)} \varphi(x^*)(t_j d^j)^k + \frac{o(\| t_j d^j \|^{p-1})}{t_j^{p-1}} \\
                &= t_j^{-(p-1)} \sum_{k = 0}^{p-1} \frac{1}{k!} t_j^{k} \deriv^{(k+1)} f_i(x^*)(d^j)^{k+1} + \frac{o(\| t_j d^j \|^{p-1})}{t_j^{p-1}} \\
                &= t_j^{-(p-1)} \sum_{k = 1}^{p} \frac{1}{(k-1)!} t_j^{k-1} \deriv^{(k)} f_i(x^*)(d^j)^{k} + \frac{o(\| t_j d^j \|^{p-1})}{t_j^{p-1}} \\
                &= \sum_{k = 1}^{p} \frac{1}{(k-1)!} t_j^{k-p} \deriv^{(k)} f_i(x^*)(d^j)^{k} + \frac{o(\| t_j d^j \|^{p-1})}{t_j^{p-1}} 
            \end{aligned}
        \end{equation}
        for all $j \in \N$.\\
        \textbf{Step 3:} Let $i \in \{1,\dots,m\}$. Taylor expansion of $f_i$ at $x^*$ yields
        \begin{equation} \label{eq:proof_lem_PC_slope_growth_convex_1}
            \begin{aligned}
                \frac{f_i(x^j) - f_i(x^*)}{t_j^p}
                &= t_j^{-p} ( T_{p} f_i(x^* + t_j d^j,x^*) - f_i(x^*) ) + \frac{o(\| t_j d^j \|^{p})}{t_j^p} \\
                &= t_j^{-p} \sum_{k = 1}^{p} \frac{1}{k!} \deriv^k f_i(x^*)(t_j d^j)^k + \frac{o(\| t_j d^j \|^{p})}{t_j^p} \\
                &= \sum_{k = 1}^{p} \frac{1}{k!} t_j^{k-p} \deriv^k f_i(x^*)(d^j)^k + \frac{o(\| t_j d^j \|^{p})}{t_j^p}
            \end{aligned}
        \end{equation}
        for all $j \in \N$. \\
        \textbf{Step 4:} Let $i \in I$ (cf. Step 1). When comparing the sums on the right-hand sides of \eqref{eq:proof_lem_PC_slope_growth_convex_2} and \eqref{eq:proof_lem_PC_slope_growth_convex_1}, we see that they only differ by the coefficients $1/k!$ and $1/(k-1)!$. Since $1/k! < 1/(k-1)!$ for $k \geq 2$ and equality holds for $k = 1$ (since $1! = 0! = 1$), assumption \eqref{eq:higher_order_pos_def} and Step 1 allow us to estimate
        \begin{align} \label{eq:proof_lem_PC_slope_growth_convex_5}
            \sum_{k = 1}^{p} \frac{1}{k!} t_j^{k-p} \deriv^k f_i(x^*)(d^j)^k
            \leq \sum_{k = 1}^{p} \frac{1}{(k-1)!} t_j^{k-p} \deriv^{(k)} f_i(x^*)(d^j)^{k},
        \end{align}
        and, in particular,
        \begin{equation}  \label{eq:proof_lem_PC_slope_growth_convex_3}
            \begin{aligned}
                \liminf_{j \rightarrow \infty} \frac{f_i(x^j) - f_i(x^*)}{t_j^p}
                &\leq \liminf_{j \rightarrow \infty} \sum_{k = 1}^{p} \frac{1}{(k-1)!} t_j^{k-p} \deriv^{(k)} f_i(x^*)(d^j)^{k} \\
                &= \liminf_{j \rightarrow \infty} \frac{\langle \nabla f_i(x^j), d^j \rangle}{t_j^{p-1}}.
            \end{aligned}
        \end{equation}
        \textbf{Step 5:} For $i \in I$ let $J(i) := \{ j \in \N : i \in A(x^j) \}$. By Step 1 $J(i)$ is unbounded. As in \eqref{eq:proof_lem_PC_slope_growth_convex_3} we can estimate
        \begin{equation}
            \begin{aligned} \label{eq:proof_lem_PC_slope_growth_convex_4}
                \ldDH^p f(x^*,d) 
                &= \liminf_{t \searrow 0, d' \rightarrow d} \frac{f(x^* + t d') - f(x^*)}{t^p}
                \leq \liminf_{j \in J(i), j \rightarrow \infty} \frac{f(x^* + t_j d^j) - f(x^*)}{t_j^p} \\
                &= \liminf_{j \in J(i), j \rightarrow \infty} \frac{f_i(x^j) - f_i(x^*)}{t_j^p}
                \leq \liminf_{j \in J(i), j \rightarrow \infty} \frac{\langle \nabla f_i(x^j), d^j \rangle}{t_j^{p-1}}.
            \end{aligned}
        \end{equation}
        \textbf{Step 6:} Since $f$ is piecewise differentiable, it holds
        \begin{align*}
            \xi^j \in \partial f(x^j) \subseteq \conv(\{ \nabla f_i(x^j) : i \in A(x^j) \}) \quad \forall j \in \N.
        \end{align*}
        Denote $I = \{ i_1, \dots, i_{|I|} \}$ (with $I$ from Step 1). Then there is a sequence $(\alpha^j)_j \in \R^{|I|}$ with $\alpha_l^j \geq 0$ for all $l \in \{1, \dots, |I| \}$, $\xi^j = \sum_{l = 1}^{|I|} \alpha_l^j \nabla f_{i_l}(x^j)$ for all $j \in \N$ and $\alpha_l^j = 0$ whenever $j \notin J(i_l)$ (i.e., whenever $f_{i_l}$ is inactive at $x^j$). For $j \in \N$ we obtain
        \begin{align*}
            \frac{\langle \xi^j, d^j \rangle}{t_j^{p-1}}
            &= \sum_{l = 1}^{|I|} \alpha_l^j \frac{\langle \nabla f_{i_l}(x^j), d^j \rangle}{t_j^{p-1}}
            = \sum_{l : j \in J(i_l)} \alpha_l^j \frac{\langle \nabla f_{i_l}(x^j), d^j \rangle}{t_j^{p-1}} \\
            &\geq \min_{l : j \in J(i_l)} \frac{\langle \nabla f_{i_l}(x^j), d^j \rangle}{t_j^{p-1}}.
        \end{align*}
        Since the minimum on the right-hand side in the previous inequality is taken over a finite set, using \eqref{eq:proof_lem_PC_slope_growth_convex_4} yields
        \begin{align*}
            \liminf_{j \rightarrow \infty} \frac{\langle \xi^j, d^j \rangle}{t_j^{p-1}}
            &\geq \liminf_{j \rightarrow \infty} \min_{l : j \in J(i_l)} \frac{\langle \nabla f_{i_l}(x^j), d^j \rangle}{t_j^{p-1}} \\
            &= \min_{l \in \{1,\dots,|I|\}} \liminf_{j \in J(i_l), j \rightarrow \infty} \frac{\langle \nabla f_{i_l}(x^j), d^j \rangle}{t_j^{p-1}} 
            \stackrel{\eqref{eq:proof_lem_PC_slope_growth_convex_4}}{\geq} \ldDH^p f(x^*,d).
        \end{align*}
        Since $(\xi^j)_j$, $(d^j)_j$ and $(t_j)_j$ were chosen as arbitrary sequences in the limit inferior on the left-hand side of \eqref{eq:higher_order_semismooth}, this shows that \eqref{eq:higher_order_semismooth} holds.
    \end{proof}

    For the special case $p = 2$, we obtain the following corollary:

    \begin{corollary} \label{cor:p_eq_2}
        Let $f : \R^n \rightarrow \R$, $x \mapsto \max_{i \in \{1,\dots,m\}} f_i(x)$, for twice continuously differentiable, strongly convex functions $f_1, \dots, f_m$, $m \in \N$. Then $f$ has a unique minimum of order $2$ that satisfies \eqref{eq:higher_order_semismooth} and, in particular, \eqref{eq:p_order_min_semismooth}.
    \end{corollary}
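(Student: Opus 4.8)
The plan is to reduce everything to Lemma \ref{lem:convex_higher_order} with $p = 2$, so that the only real work is to verify its hypotheses. First I would observe that $f = \max_i f_i$ is piecewise twice continuously differentiable with selection functions $f_1, \dots, f_m$, since $f(x) = f_i(x)$ for any maximizing index $i \in A(x)$ and each $f_i$ is twice continuously differentiable by assumption.

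Next I would establish that $f$ possesses a unique minimum of order $2$. The key observation is that a pointwise maximum of strongly convex functions is again strongly convex: if each $f_i$ is strongly convex with modulus $\mu_i > 0$ and we set $\mu := \min_i \mu_i > 0$, then each $f_i - \frac{\mu}{2} \| \cdot \|^2$ is convex, hence so is their pointwise maximum $f - \frac{\mu}{2} \| \cdot \|^2$. Thus $f$ is strongly convex with modulus $\mu$ and therefore has a unique global minimizer $x^*$. Strong convexity together with $0 \in \partial f(x^*)$ yields $f(x) \geq f(x^*) + \frac{\mu}{2} \| x - x^* \|^2$ for all $x \in \R^n$, which is exactly the statement that $x^*$ is a minimum of order $p = 2$ with constant $\beta = \mu/2$ (and $U = \R^n$).

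It then remains to check condition \eqref{eq:higher_order_pos_def}. For $p = 2$ the only relevant order is $k = 2$, for which $\deriv^{(2)} f_i(x^*)(d)^2 = d^\top \nabla^2 f_i(x^*) d$. Since each $f_i$ is convex, its Hessian is positive semidefinite everywhere, so this quadratic form is nonnegative for all $d \in \R^n$. Consequently I would simply take $V_i = \R^n$, which is open and trivially contains $C_i(x^*) \setminus \{0\}$, so that \eqref{eq:higher_order_pos_def} holds. Invoking Lemma \ref{lem:convex_higher_order} then gives \eqref{eq:higher_order_semismooth} and, in particular, \eqref{eq:p_order_min_semismooth}.

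As for difficulty, there is no serious obstacle: the corollary is essentially a specialization of Lemma \ref{lem:convex_higher_order} in which strong convexity renders every hypothesis transparent, and the convenient choice $V_i = \R^n$ sidesteps any need to analyze the cones $C_i(x^*)$. The only point requiring a short argument rather than a one-line remark is the strong convexity (and hence unique order-$2$ minimality) of the maximum $f$, and even that reduces to the standard fact that subtracting a common quadratic from each piece preserves convexity under the max operation.
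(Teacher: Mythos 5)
Your proposal is correct and follows the same overall strategy as the paper: verify the hypotheses of Lemma \ref{lem:convex_higher_order} with $p=2$ and $V_i = \R^n$, where \eqref{eq:higher_order_pos_def} reduces to positive semidefiniteness of the Hessians at $x^*$. The one place where you genuinely diverge is in establishing the order-$2$ growth. The paper argues locally: it picks convex multipliers $\alpha$ with $\sum_i \alpha_i \nabla f_i(x^*) = 0$ from the stationarity condition, Taylor-expands $\psi = \sum_i \alpha_i f_i$ at $x^*$, and bounds $f \geq \psi$ from below using $d^\top \nabla^2 f_i(x^*) d \geq \eta \| d \|^2$, obtaining quadratic growth with constant $\beta < \eta/2$ on a neighborhood. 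You instead observe that the max of $\mu$-strongly convex functions is itself $\mu$-strongly convex (subtract the common quadratic and use that convexity is preserved under max), which immediately yields existence and uniqueness of $x^*$ together with the \emph{global} bound $f(x) \geq f(x^*) + \tfrac{\mu}{2}\| x - x^* \|^2$. Your route is slightly cleaner: it delivers existence of the minimizer for free (the paper's appeal to strict convexity alone would not guarantee existence, though strong convexity of the $f_i$ repairs this via coercivity), and it avoids the Taylor-expansion bookkeeping. The paper's local argument has the mild advantage of only using second-order information at $x^*$ itself, but for this corollary that generality is not needed. Both arguments are valid.
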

    \begin{proof}
        Since strong convexity implies strict convexity and since the maximum of strictly convex functions is strictly convex, $f$ has a unique minimum $x^* \in \R^n$. By strong convexity of $f_1, \dots, f_m$, there is some $\eta > 0$ with $d^\top \nabla^2 f_i(x^*) d \geq \eta \| d \|^2$ for all $d \in \R^n$, $i \in \{1,\dots,m\}$. Thus, \eqref{eq:higher_order_pos_def} holds for $p = 2$ (with $V_i = \R^n$, $i \in \{1,\dots,m\}$). Since $f$ is piecewise differentiable, there is some $\alpha \in \R^m$ with $\alpha_i \geq 0$ for all $i \in \{1,\dots,m\}$, $\alpha_i = 0$ whenever $i \notin A(x^*)$, $\sum_{i = 1}^m \alpha_i = 1$ and 
        \begin{align*}
            \sum_{i = 1}^m \alpha_i \nabla f_i(x^*) = 0. 
        \end{align*}
        Now Taylor expansion of $\psi(x) := \sum_{i = 1}^m \alpha_i f_i(x)$ at $x^*$ yields
        \begin{align*}
            f(x) 
            &= \max_{i \in \{1,\dots,m\}} f_i(x)
            \geq \psi(x) \\
            &= \psi(x^*) + \nabla \psi(x^*) + \frac{1}{2} (x - x^*)^\top \nabla^2 \psi(x^*) (x - x^*) + o(\| x - x^* \|^2) \\
            &= f(x^*) + \frac{1}{2} (x - x^*)^\top \left( \sum_{i = 1}^m \alpha_i \nabla^2 f_i(x^*) \right) (x - x^*) + o(\| x - x^* \|^2) \\
            &\geq f(x^*) + \frac{1}{2} \eta \| x - x^* \|^2 + o(\| x - x^* \|^2) \\
            &= f(x^*) + \frac{1}{2} \left( \eta + \frac{o(\| x - x^* \|^2)}{\| x - x^* \|^2} \right) \| x - x^* \|^2 \quad \forall x \in \R^n.
        \end{align*}
        Thus $x^*$ is a minimum of order $2$ with a constant $\beta < \frac{1}{2} \eta$. Application of Lemma \ref{lem:convex_higher_order} completes the proof.    
    \end{proof}

    Example \ref{example:higher_order_convex} in the appendix shows a case where the minimum is of order $3$. Furthermore, it shows that the inequality \eqref{eq:higher_order_pos_def} that we required for Lemma \ref{lem:convex_higher_order} does not imply that the selection functions have to be convex.

    Unfortunately, properties \eqref{eq:p_order_min_semismooth} and \eqref{eq:higher_order_semismooth} may fail to hold for nonconvex, piecewise differentiable functions when $p \geq 2$. The reason for this is the fact that when $f$ is nonconvex, the higher-order terms $\deriv^k f_i(x^*)(d^j)^k$ in Step 4 in the proof of Lemma \ref{lem:convex_higher_order} may be negative, such that the estimate \eqref{eq:proof_lem_PC_slope_growth_convex_5} does not hold. This is demonstrated in Example \ref{example:crescent} in the appendix. 

\section{Deriving the speed of convergence} 
\label{sec:analysis_speed_of_convergence}

    In the previous section, we showed that property \eqref{eq:p_order_min_semismooth} is implied by the higher-order semismoothness property \eqref{eq:higher_order_semismooth} which, for minima of order $1$, holds for semismooth functions, and for minima of order larger than $1$, holds for a class of convex, piecewise differentiable functions.
    In the following theorem, which we regard as the main result of this article, we show how property \eqref{eq:p_order_min_semismooth} can be used to derive a relationship between the speed of convergence of $(x^j)_j$ and the speeds of $(\eps_j)_j$ and $(\delta_j)_j$:

    \begin{theorem} \label{thm:xj_speed_higher_order}
        Let $f : \R^n \rightarrow \R$ be locally Lipschitz continuous. Let $(x^j)_j \in \R^n$, $(\eps_j)_j \in \R^{\geq 0}$ and $(\delta_j)_j \in \R^{\geq 0}$ such that $x^j \rightarrow x^* \in \R^n$, $\eps_j \rightarrow 0$ and $\min(\| \partial_{\eps_j} f(x^j) \|) \leq \delta_j$ for all $j \in \N$.
        Assume that $x^*$ is a minimum of order $p \in \N$ with constant $\beta > 0$ and that \eqref{eq:p_order_min_semismooth} holds in $x^*$.
        \begin{enumerate}
            \item[(i)] If $p = 1$ and $\delta_j \rightarrow \Bar{\delta} < \beta$, then there are $M > 0$ and $N \in \N$ such that
            \begin{align} \label{eq:xj_speed_higher_order_1}
                \| x^j - x^* \| \leq M \eps_j \quad \forall j > N.
            \end{align}
            \item[(ii)] If $p \geq 2$ then there are $M > 0$ and $N \in \N$ such that
            \begin{align} \label{eq:xj_speed_higher_order_2}
                \| x^j - x^* \| \leq M \max(\eps_j^{\frac{1}{p}}, \delta_j^{\frac{1}{p-1}}) \quad \forall j > N.
            \end{align}
        \end{enumerate}
    \end{theorem}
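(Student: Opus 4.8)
The plan is to argue by contradiction in both cases, extracting a subsequence on which $\|x^j - x^*\|$ is anomalously large and then using property \eqref{eq:p_order_min_semismooth} to bound $\min(\|\partial_{\eps_j}f(x^j)\|)$ from below, contradicting the prescribed decay of $(\eps_j)_j$ and $(\delta_j)_j$. Write $r_j := \|x^j - x^*\|$, fix a local Lipschitz constant $L$ of $f$ near $x^*$, pick $\xi^j \in \partial_{\eps_j}f(x^j)$ with $\|\xi^j\| \le \delta_j$ (which exists by $(\eps_j,\delta_j)$-criticality), and set $d^j := (x^j - x^*)/r_j$. If the claimed bound fails, then after relabelling a subsequence we may assume $r_j/\eps_j \to \infty$ in case (i), respectively $r_j/\max(\eps_j^{1/p}, \delta_j^{1/(p-1)}) \to \infty$ in case (ii); in either case this yields $\eps_j/r_j^p \to 0$ (hence also $\eps_j/r_j \to 0$, since $r_j \to 0$), and in case (ii) additionally $\delta_j/r_j^{p-1} \to 0$. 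Passing to a further subsequence, assume $d^j \to d$ with $\|d\| = 1$.

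The core is a lower bound for $\langle \xi^j, d^j \rangle$. Since $\partial_{\eps_j}f(x^j)$ is the convex hull of the compact set $\bigcup_{y \in \Bcl_{\eps_j}(x^j)} \partial f(y)$, Carathéodory's theorem lets me write $\xi^j = \sum_k \lambda_k^j\, \eta^{j,k}$ as a finite convex combination with $\eta^{j,k} \in \partial f(y^{j,k})$ and $y^{j,k} \in \Bcl_{\eps_j}(x^j)$. For each such $y := y^{j,k}$ I write $y = x^* + t\,d'$ with $t := \|y - x^*\|$ and $d' := (y - x^*)/t$; because $\|y - x^j\| \le \eps_j$ and $\eps_j/r_j \to 0$, one checks that $t/r_j \to 1$ and $\|d' - d^j\| = O(\eps_j/r_j)$ uniformly in $k$, so $t \searrow 0$ and $d' \to d$. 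Fixing $\gamma < \beta\|d\|^p = \beta$, property \eqref{eq:p_order_min_semismooth} then gives, for all $j$ large, $\langle \eta^{j,k}, d' \rangle \ge \gamma\, t^{p-1}$ for every $\eta^{j,k}$; combining with $\langle \eta^{j,k}, d^j \rangle = \langle \eta^{j,k}, d' \rangle + \langle \eta^{j,k}, d^j - d' \rangle$ and the bound $\|\eta^{j,k}\| \le L$ yields $\langle \eta^{j,k}, d^j \rangle \ge \gamma\, t^{p-1} - L\,\|d^j - d'\|$. Averaging over $k$ with the weights $\lambda_k^j$, using $t \ge r_j - \eps_j$ and $p-1 \ge 0$, gives $\langle \xi^j, d^j \rangle \ge \gamma(r_j - \eps_j)^{p-1} - O(\eps_j/r_j)$.

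Dividing by $r_j^{p-1}$ and using $(r_j - \eps_j)^{p-1}/r_j^{p-1} = (1 - \eps_j/r_j)^{p-1} \to 1$ together with $O(\eps_j/r_j)/r_j^{p-1} = O(\eps_j/r_j^p) \to 0$, I obtain $\liminf_j \langle \xi^j, d^j \rangle/r_j^{p-1} \ge \gamma$, and since $\gamma < \beta$ was arbitrary, $\liminf_j \langle \xi^j, d^j \rangle/r_j^{p-1} \ge \beta$. On the other hand $\langle \xi^j, d^j \rangle \le \|\xi^j\|\,\|d^j\| = \|\xi^j\| \le \delta_j$. In case (ii) this forces $\liminf_j \delta_j/r_j^{p-1} \ge \beta > 0$, contradicting $\delta_j/r_j^{p-1} \to 0$; in case (i), where $r_j^{p-1} \equiv 1$, it forces $\bar\delta = \lim_j \delta_j \ge \beta$, contradicting $\bar\delta < \beta$.

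The main obstacle, and the step requiring the most care, is the passage from the radial bound $\langle \eta^{j,k}, d' \rangle \ge \gamma\, t^{p-1}$ supplied by \eqref{eq:p_order_min_semismooth} (which only controls each subgradient paired with \emph{its own} direction $d'$) to a bound for the common direction $d^j$ needed to handle the convex combination. The resulting error term $L\|d^j - d'\| = O(\eps_j/r_j)$ must be shown to be negligible \emph{after} dividing by $r_j^{p-1}$, i.e.\ $O(\eps_j/r_j^p) \to 0$; this is exactly the content of the hypothesis $r_j \gg \eps_j^{1/p}$ coming from the negated conclusion, which explains why the $\eps_j^{1/p}$ term appears in \eqref{eq:xj_speed_higher_order_2} and why for $p = 1$ one instead needs the separate quantitative assumption $\bar\delta < \beta$.
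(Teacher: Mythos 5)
Your proposal is correct and follows essentially the same route as the paper's proof: negate the conclusion to get a subsequence with $\eps_j/r_j^p \to 0$ and $\delta_j/r_j^{p-1}$ bounded away from (or below) $\beta$, decompose the small-norm element of $\partial_{\eps_j}f(x^j)$ via Carathéodory, control the discrepancy between each sampled direction and $d^j$ by $O(\eps_j/r_j)$ so that it vanishes after division by $r_j^{p-1}$, and apply \eqref{eq:p_order_min_semismooth} to each piece to contradict $\langle\xi^j,d^j\rangle\le\delta_j$. The only cosmetic difference is that you normalize each sampled point by its own distance $\|y-x^*\|$ with unit directions, whereas the paper uses the common scale $\|x^{j_l}-x^*\|$ with non-unit directions $d_i^l$; both are valid parametrizations of the limit inferior in \eqref{eq:p_order_min_semismooth}.
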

    \begin{proof}
        Assume that (i) or (ii) do not hold. Then in both cases there is a subsequence $(j_l)_l \in \N$ with $x^{j_l} \neq x^*$ for all $l \in \N$,
        \begin{align} \label{eq:proof_thm_xj_speed_higher_order_1}
            \lim_{l \rightarrow \infty} \frac{\eps_{j_l}}{\| x^{j_l} - x^* \|^{p}} = 0
            \quad \text{and} \quad
             \lim_{l \rightarrow \infty} \frac{\delta_{j_l}}{\| x^{j_l} - x^* \|^{p-1}} < \beta.
        \end{align}
        (In case (ii) does not hold, the second limit actually vanishes.) \\
        \textbf{Step 1:} For $l \in \N$ let $g^l \in \partial_{\eps_{j_l}} f(x^{j_l})$ with $\| g^l \| \leq \delta_{j_l}$. By Carathéodory's theorem, for $i \in \{1,\dots,n+1\}$, there are $y_i^l \in \Bcl_{\eps_{j_l}}(x^{j_l})$, $\xi_i^l \in \partial f(y_i^l) \subseteq \partial_{\eps_{j_l}} f(x^{j_l})$ and $\alpha_i^l \geq 0$ with $\sum_{i = 1}^{n+1} \alpha_i^l = 1$ and
        \begin{align*}
            g^l = \sum_{i = 1}^{n+1} \alpha_i^l \xi_i^l.
        \end{align*}
        Since $\| y_i^l - x^{j_l} \| \leq \eps_{j_l}$ it holds $\lim_{l \rightarrow \infty} y_i^l = x^*$ for all $i \in \{1,\dots,n+1\}$. By compactness of the $\eps$-subdifferential at $x^*$ (cf. \cite{G1977}, Proposition 2.3), we can assume w.l.o.g. that $\lim_{l \rightarrow \infty} \xi_i^l = \Bar{\xi}_i \in \R^n$ for all $i \in \{1,\dots,n+1\}$. By \cite{C1990}, Proposition 2.1.5, it follows that $\Bar{\xi}_i \in \partial f(x^*)$. Furthermore, by compactness of the set of convex coefficients, we can assume w.l.o.g. that $\lim_{l \rightarrow \infty} \alpha_i^l = \Bar{\alpha}_i$ for all $i \in \{1,\dots,n+1\}$, such that
        \begin{align*}
            \Bar{g} := \sum_{i = 1}^{n+1} \Bar{\alpha}_i \Bar{\xi}_i \in \partial f(x^*)
        \end{align*}
        is the limit of $(g^l)_l$. (In case (ii) we have $\Bar{g} = 0$.) \\
        \textbf{Step 2:} Consider the sequences $(d^l)_l$ and $(d_i^l)_l$ given by
        \begin{align*}
            d^l := \frac{x^{j_l} - x^*}{\| x^{j_l} - x^* \|} \quad \text{and} \quad d_i^l := \frac{y_i^l - x^*}{\| x^{j_l} - x^* \|}, \ i \in \{1,\dots,n+1\}.
        \end{align*}
        By compactness, $(d^l)_l$ must have an accumulation point $\Bar{d} \in \R^n$ with $\| \Bar{d} \| = 1$. Assume w.l.o.g. that $\Bar{d}$ is the limit of $(d^l)_l$. 
        For all $i \in \{1,\dots,n+1\}$, it holds
        \begin{align*}
            \| d_i^l - d^l \| 
            = \frac{\| y_i^l - x^* - (x^{j_l} - x^*) \|}{\| x^{j_l} - x^* \|}
            = \frac{\| y_i^l - x^{j_l} \|}{\| x^{j_l} - x^* \|}
            \leq \frac{\eps_{j_l}}{\| x^{j_l} - x^* \|},
        \end{align*}
        and combined with \eqref{eq:proof_thm_xj_speed_higher_order_1}, we obtain
        \begin{align} \label{eq:proof_thm_xj_speed_higher_order_2}
            \frac{\| d_i^l - d^l \|}{\| x^{j_l} - x^* \|^{p-1}} \leq \frac{\eps_{j_l}}{\| x^{j_l} - x^* \|^{p}} \xrightarrow[]{l \rightarrow \infty} 0.
        \end{align}
        In particular, $\lim_{l \rightarrow \infty} d_i^l = \bar{d}$ for all $i \in \{1,\dots,n+1\}$.\\ 
        \textbf{Step 3:} By construction we have 
        \begin{align*}
            \xi_i^l \in \partial f(y_i^l) 
            = \partial f \left( x^* + \| x^{j_l} - x^* \| \frac{y_i^l - x^*}{\| x^{j_l} - x^* \|} \right) 
            = \partial f(x^* + \| x^{j_l} - x^* \| d_i^l).
        \end{align*}
        Assume w.l.o.g. that $x^{j_l} \in U$ with $U$ as in Definition \ref{def:order_minimum}. Combination of \eqref{eq:p_order_min_semismooth} and \eqref{eq:proof_thm_xj_speed_higher_order_2} (and boundedness of $(\xi_i^l)_l$) yields
        \begin{equation}
            \begin{aligned} \label{eq:proof_thm_xj_speed_higher_order_3}
                \liminf_{l \rightarrow \infty} \frac{\langle \xi_i^l, d^l \rangle}{\| x^{j_l} - x^* \|^{p-1}}
                &= \liminf_{l \rightarrow \infty} \left( \frac{\langle \xi_i^l, d_i^l \rangle}{\| x^{j_l} - x^* \|^{p-1}} + \frac{\langle \xi_i^l, d^l - d_i^l \rangle}{\| x^{j_l} - x^* \|^{p-1}} \right)\\
                &\stackrel{\eqref{eq:proof_thm_xj_speed_higher_order_2}}{=} \liminf_{l \rightarrow \infty} \frac{\langle \xi_i^l, d_i^l \rangle}{\| x^{j_l} - x^* \|^{p-1}} 
                \stackrel{\eqref{eq:p_order_min_semismooth}}{\geq} \beta \| \Bar{d} \|^p = \beta
            \end{aligned}
        \end{equation}
        for all $i \in \{1,\dots,n+1\}$. Furthermore note that
        \begin{equation} \label{eq:proof_thm_xj_speed_higher_order_5}
            \begin{aligned}
                \delta_{j_l}
                &\geq \| g^l \| 
                = \left\| \sum_{i = 1}^{n+1} \alpha_i^l \xi_i^l \right\|
                = \left\| \sum_{i = 1}^{n+1} \alpha_i^l \xi_i^l \right\| \| d^l \| \\
                &\geq \langle \sum_{i = 1}^{n+1} \alpha_i^l \xi_i^l, d^l \rangle
                = \sum_{i = 1}^{n+1} \alpha_i^l \langle \xi_i^l, d^l \rangle \quad \forall l \in \N.
            \end{aligned}
        \end{equation}
        Division of \eqref{eq:proof_thm_xj_speed_higher_order_5} by $\| x^{j_l} - x^* \|^{p-1}$ and combination with \eqref{eq:proof_thm_xj_speed_higher_order_1} and \eqref{eq:proof_thm_xj_speed_higher_order_3} yields
        \begin{equation*}
            \begin{aligned}
                \beta 
                &\stackrel{\eqref{eq:proof_thm_xj_speed_higher_order_1}}{>} \liminf_{l \rightarrow \infty} \frac{\delta_{j_l}}{\| x^{j_l} - x^* \|^{p-1}}
                \stackrel{\eqref{eq:proof_thm_xj_speed_higher_order_5}}{\geq} \liminf_{l \rightarrow \infty} \frac{ \sum_{i = 1}^{n+1} \alpha_i^l \langle \xi_i^l, d^l \rangle}{\| x^{j_l} - x^* \|^{p-1}} \\
                &= \liminf_{l \rightarrow \infty} \sum_{i = 1}^{n+1} \alpha_i^l \frac{\langle \xi_i^l, d^l \rangle}{\| x^{j_l} - x^* \|^{p-1}}
                \geq \sum_{i = 1}^{n+1} \liminf_{l \rightarrow \infty} \alpha_i^l \frac{\langle \xi_i^l, d^l \rangle}{\| x^{j_l} - x^* \|^{p-1}} \\
                &\stackrel{\eqref{eq:proof_thm_xj_speed_higher_order_3}}{\geq} \sum_{i = 1}^{n+1} \Bar{\alpha}_i \beta = \beta,
            \end{aligned}
        \end{equation*}
        which is a contradiction.
    \end{proof}

    \begin{remark} \label{remark:conv_exponent} 
        Let $(a_j)_j \in \R^{\geq 0}$ be a sequence with $a_j \rightarrow 0$ and let $b > 0$. If $(a_j)_j$ Q-converges with order $q \in \N$ and rate $\mu \geq 0$, then for the sequence $(a_j^b)_j$ we have
        \begin{align*}
            \limsup_{j \rightarrow \infty} \frac{| a_{j+1}^b - 0 |}{| a_j^b - 0 |^q}
            = \limsup_{j \rightarrow \infty} \left( \frac{| a_{j+1} - 0 |}{| a_j - 0 |^q} \right)^b
            \leq \mu^b,
        \end{align*}
        so $(a_j^b)_j$ Q-converges with order $q$ and rate $\mu^b$. Furthermore, if $(a_j)_j$ R-converges with order $q$ and rate $\mu$, then R-convergence of $(a^b_j)_j$ with order $q$ and rate $\mu^b$ follows by the same argument (applied to the dominating sequence). Finally, if $(a_j)_j$ converges Q- or R-superlineary, then the same holds for $(a_j^b)_j$, respectively.
    \end{remark}

    By Theorem \ref{thm:xj_speed_higher_order}, $(x^j)_j$ converges at least as fast as the slowest of the two sequences $(\eps_j^{1/p})_j$ and $(\delta_j^{1/(p-1)})_j$. More precisely, considering Remark \ref{remark:conv_exponent}, the order of R-convergence of $(x^j)_j$ is the worst of the orders of R-convergence of $(\eps_j)_j$ and $(\delta_j)_j$, and the rate is either the $p$ or $(p-1)$-th root of the corresponding rate. In Example \ref{example:xj_speed_estim_equal} in the appendix, we show that these estimates are, in a sense, tight.

\section{Application to descent methods} \label{sec:descent_methods}

    In this section we use Theorem \ref{thm:xj_speed_higher_order} to analyze the behavior of a class of descent methods for nonsmooth optimization. Here the sequences $(\eps_j)_j$ and $(\delta_j)_j$ occur as parameters of an algorithm that generates a sequence $(x^j)_j$ with $\min(\| \partial_{\eps_j} f(x^j) \|) \leq \delta_j$. In addition to the analysis of the algorithm, this also gives us an opportunity to showcase Theorem \ref{thm:xj_speed_higher_order} in numerical examples. Note that in the context of this article, we are less interested in the question \emph{if} the algorithm converges, and more interested in the question of \emph{how fast} it converges if it does. As such, convergence of $(x^j)_j$ to a point $x^*$ satisfying the requirements from the previous section is part of our assumptions, and we do not proof the convergence to such points. For example, convergence could be assured by assuming that the initial point is in a sublevel set in which $x^*$ is the only critical point (cf. Section \ref{sec:basics}). In light of this, assuming convergence is a relatively weak assumption.
    We first introduce an abstract version of the descent method, in which no particular approximation of the $\eps$-subdifferential and no particular line search is chosen. For this abstract method we derive a simple convergence result from Theorem \ref{thm:xj_speed_higher_order}. Afterwards, we use the implementable descent method described in \cite{GP2021,G2022,G2024} to perform numerical experiments with common test functions.
    
    \subsection{Abstract descent method}

    For $x \in \R^n$ and $\eps > 0$ consider the element
    \begin{align*}
        \Bar{v} := \argmin_{\xi \in - \partial_\eps f(x)} \| \xi \|^2.
    \end{align*}
    Using convex analysis (\cite{HU1993}, Theorem 3.1.1), we see that either $\Bar{v} = 0$ (i.e., $x$ is $(\eps,0)$-critical) or it holds
    \begin{align*}
        \langle \xi, \Bar{v} \rangle \leq - \| \Bar{v} \|^2 < 0 \quad \forall \xi \in \partial_\eps f(x).
    \end{align*}
    In the latter case, application of the mean value theorem \eqref{eq:mean_value_theorem} shows that
    \begin{align} \label{eq:eps_descent_armijo}
        f \left( x + t \Bar{v} \right) \leq f(x) - t \| \Bar{v} \|^2 \quad \forall t \in (0,\eps / \| \Bar{v} \|],
    \end{align}
    so $\Bar{v}$ is a descent direction of $f$ at $x$. In particular, $t = \eps/\| \Bar{v} \|$ is an explicit step length that yields decrease in $f$. If instead $\Bar{v} = 0$, then no descent direction can be derived from $\partial_\eps f(x)$. However, since $\partial_{\eps'} f(x) \subseteq \partial_\eps f(x)$ for $\eps' < \eps$, a descent direction at $x$ may still be computable by reducing $\eps$.

    Unfortunately, the direction $\Bar{v}$ cannot be computed in practice, since it is based on knowing the entire $\eps$-subdifferential. To fix this issue, approximations $W$ of $\partial_\eps f(x)$ are considered and the direction
    \begin{align*}
        v := \argmin_{\xi \in - \conv(W)} \| \xi \|^2
    \end{align*}
    is computed. Clearly, we cannot use arbitrary subsets $W \subseteq \partial_\eps f(x)$ for this. Motivated by \eqref{eq:eps_descent_armijo}, we choose $c \in (0,1)$ and $\delta > 0$ and say that $W$ is a \emph{sufficient} approximation (and that $v$ yields \emph{sufficient} descent), if $\| v \| \leq \delta$ or
    \begin{align} \label{eq:sufficient_decrease}
        f \left( x + \frac{\eps}{\|v\|} v \right) \leq f(x) - c \eps \| v \|.
    \end{align}
    Based on these ideas, Algorithm \ref{algo:abstract_descent_method} can be constructed as an abstract descent method.
    \begin{algorithm}
    	\caption{Abstract $\varepsilon$-descent method}
    	\label{algo:abstract_descent_method}
    	\begin{algorithmic}[1] 
    		\Require Initial point $x^0 \in \R^n$, sequences $(\eps_j)_j, (\delta_j)_j \in \R^{>0}$, parameter $c \in (0,1)$. 
            \State Initialize $i = 0$, $j = 1$ and $x^{1,0} = x^0$.
    		\State Compute $v = \argmin_{\xi \in -\conv(W)} \| \xi \|^2$ for a sufficient (w.r.t. $c$) approximation $W \subseteq \partial_{\eps_j} f(x^{j,i})$.
            \If {$\| v \| \leq \delta_j$}
                \State Set $x^{j+1,0} = x^{j,i}$, $j = j+1$, $i = 0$ and go to Step 2.
            \Else
                \State Compute $t \geq \eps_j / \| v \|$ with $f(x^{j,i} + t v) \leq f(x^{j,i}) -c t \| v \|^2$.
                \State Set $x^{j,i+1} = x^{j,i} + t v$, $i = i+1$ and go to Step 2.
            \EndIf.
	       \end{algorithmic} 
    \end{algorithm}

    Note that there are two indices in Algorithm \ref{algo:abstract_descent_method}: The index $i$ enumerates the individual descent steps for fixed $\eps_j$ and $\delta_j$, and the index $j$ is increased whenever a point $x^{j,i}$ is reached where no further decrease can be achieved (with the tolerances $\eps_j$ and $\delta_j$). For $j \in \N$ let $N_j \in \N \cup \{ 0 \}$ be the final $i$ in Step 4 before $i$ is reset to $0$ and $j$ is increased. (In other words, $N_j$ is the number of descent steps that are executed for each $j$.) Since the iterates $x^{j,i}$ are generated sequentially, we can also enumerate them in a more classical way as
    \begin{align} \label{eq:def_z}
        (z^l)_l := (x^{1,0},x^{1,1},\dots,x^{1,N_1},x^{2,0},x^{2,1},\dots,x^{2,N_2},x^{3,0},\dots).
    \end{align}
    For ease of notation, let $(x^j)_j$ be the sequence with
    \begin{align} \label{eq:def_xj}
        x^j := x^{j,N_j} = x^{j+1,0} \quad \forall j \in \N.
    \end{align}
    Then by construction it holds $\min(\| \partial_{\eps_j} f(x^j) \|) \leq \delta_j$ for all $j \in \N$. If both $(\eps_j)_j$ and $(\delta_j)_j$ converge to 0, then by Lemma 4.4.4 in \cite{G2022}, all accumulation points of $(x^j)_j$ are critical points of $f$.

    Different implementable versions of Algorithm \ref{algo:abstract_descent_method}, with explicit ways to compute the direction $v$ in Step 2 and the step length $t$ in Step 6, can be found in the literature:
    \begin{itemize}
        \item In \cite{BLO2005,BCL2020}, the set $W$ is obtained by randomly sampling points $\{ y^1, \dots, y^{2n} \}$ from $\Bcl_{\eps_j}(x^{j,i}) \setminus \Omega$, where $\Omega$ is the set of points in which $f$ is not differentiable, and then setting $W = \{ \nabla f(y^1), \dots, \nabla f(y^{2n}) \}$. (Note that this may not yield a sufficient approximation as in \eqref{eq:sufficient_decrease}.) The step length is computed via an Armijo-like backtracking line search.
        \item In \cite{GP2021,G2024,G2022,MY2012}, the set $W$ is obtained deterministically by starting with an initial subset  $W_0 \subseteq \partial_{\eps_j} f(x^{j,i})$ (e.g., $W_0 = \{ \xi \}$ for $\xi \in \partial f(x^{j,i})$) and then iteratively adding new subgradients until the approximation is sufficient. For the step length, the Armijo-like line search from \cite{BLO2005,BCL2020} is used.
        \item In \cite{ZLJ2020}, the direction $v$ in Step 2 is obtained in an iterative fashion by starting with a subgradient in $\partial f(x^{j,i})$ as an initial direction, and then updating this direction by iteratively taking convex combinations with additional subgradients from $\partial_{\eps_j} f(x^{j,i})$. (More precisely, each additional subgradient is sampled randomly along the current direction $v$.) For the step length, $t = \eps_j / \| v \|$ is used. Furthermore, the sequences $(\eps_j)_j = \eps$ and $(\delta_j)_j = \delta$ are constant, so the method stops via Step 4 once an $(\eps,\delta)$-critical point is reached. (Note that in the notation of \cite{ZLJ2020}, the roles of $\eps$ and $\delta$ are reversed compared to our notation.)
    \end{itemize} 

    \subsection{Application of our results} \label{sec:descent_methods:application_of_results}
    
    In the following, we derive the speed of R-convergence of the sequence $(x^j)_j$ computed via Algorithm \ref{algo:abstract_descent_method}. By applying Theorem \ref{thm:xj_speed_higher_order} we immediately obtain the following corollary:
    \begin{corollary} \label{cor:GS_rate}
        Let $f : \R^n \rightarrow \R$ be locally Lipschitz continuous. Let $(\eps_j)_j \in \R^{>0}$ and $(\delta_j)_j \in \R^{>0}$ with $\eps_j \rightarrow 0$. Let $(x^j)_j$ be the sequence generated by Algorithm \ref{algo:abstract_descent_method} (cf. \eqref{eq:def_xj}). Assume that $x^j \rightarrow x^* \in \R^n$, where $x^*$ is a minimum of order $p \in \N$ with constant $\beta > 0$ for which \eqref{eq:p_order_min_semismooth} holds.
        \begin{enumerate}
            \item[(i)] If $p = 1$ and $\delta_j \rightarrow \Bar{\delta} < \beta$, then there are $M > 0$ and $N \in \N$ such that
            \begin{align} \label{eq:GS_rate_p_eq_1}
                \| x^j - x^* \| \leq M \eps_j \quad \forall j > N.
            \end{align}
            \item[(ii)] If $p \geq 2$ then there are $M > 0$ and $N \in \N$ such that
            \begin{align} \label{eq:GS_rate_p_lar_2}
                \| x^j - x^* \| \leq M \max(\eps_j^{\frac{1}{p}}, \delta_j^{\frac{1}{p-1}}) \quad \forall j > N.
            \end{align}
        \end{enumerate}
    \end{corollary}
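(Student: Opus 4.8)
The plan is to derive Corollary \ref{cor:GS_rate} as a direct consequence of Theorem \ref{thm:xj_speed_higher_order}. The key observation is that the hypotheses of the corollary are engineered to match the hypotheses of the theorem almost verbatim: $f$ is locally Lipschitz continuous, $x^j \to x^*$, $\eps_j \to 0$, and $x^*$ is a minimum of order $p$ with constant $\beta>0$ satisfying \eqref{eq:p_order_min_semismooth}. The only gap to close is that the theorem additionally requires $\min(\| \partial_{\eps_j} f(x^j) \|) \leq \delta_j$ for all $j$, whereas the corollary instead assumes that $(x^j)_j$ is the specific sequence produced by Algorithm \ref{algo:abstract_descent_method}. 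So the entire content of the proof is to verify that the algorithmic construction forces this $(\eps_j,\delta_j)$-criticality condition.

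First I would recall how $(x^j)_j$ is defined from the algorithm via \eqref{eq:def_xj}, namely $x^j = x^{j,N_j} = x^{j+1,0}$, i.e. $x^j$ is the iterate reached once the index $j$ is about to be incremented. By the structure of the algorithm, the index $j$ is incremented precisely when the branch in Step 3 is taken, which happens exactly when $\| v \| \leq \delta_j$ for the direction $v = \argmin_{\xi \in -\conv(W)} \|\xi\|^2$ computed from a sufficient approximation $W \subseteq \partial_{\eps_j} f(x^{j,i})$. Since $v \in -\conv(W) \subseteq -\partial_{\eps_j} f(x^j)$, and $\|-v\| = \|v\|$, the vector $-v$ is an element of $\partial_{\eps_j} f(x^j)$ with norm at most $\delta_j$. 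This immediately gives $\min(\| \partial_{\eps_j} f(x^j) \|) \leq \|v\| \leq \delta_j$ for every $j \in \N$, which is exactly the missing hypothesis. This verification is already carried out in the text just after \eqref{eq:def_xj}, so I would simply invoke it.

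With the criticality condition established, the rest is immediate: the sequences $(x^j)_j$, $(\eps_j)_j$, $(\delta_j)_j$ satisfy all assumptions of Theorem \ref{thm:xj_speed_higher_order}, so I would apply part (i) of the theorem in the case $p=1$ with $\delta_j \to \bar\delta < \beta$ to obtain \eqref{eq:GS_rate_p_eq_1}, and part (ii) in the case $p \geq 2$ to obtain \eqref{eq:GS_rate_p_lar_2}, with the same constants $M>0$ and $N \in \N$. The two displayed bounds of the corollary are verbatim the conclusions of the theorem, so no further work is needed.

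I do not expect any real obstacle here, since the corollary is essentially a specialization of the main theorem to a concrete algorithm; the one point requiring care is making explicit that the sufficient approximation $W$ lies inside $\partial_{\eps_j} f(x^j)$ so that the computed $v$ genuinely certifies $(\eps_j,\delta_j)$-criticality rather than criticality with respect to some larger or unrelated set. Once that inclusion is noted, the proof is a one-line application of Theorem \ref{thm:xj_speed_higher_order}.
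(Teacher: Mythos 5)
Your proposal is correct and matches the paper's reasoning exactly: the paper gives no separate proof, stating only that the corollary follows immediately from Theorem \ref{thm:xj_speed_higher_order}, with the $(\eps_j,\delta_j)$-criticality of $x^j$ already noted in the text after \eqref{eq:def_xj} as a consequence of the branch condition $\|v\| \leq \delta_j$ and the inclusion $v \in -\conv(W) \subseteq -\partial_{\eps_j} f(x^j)$. No gaps.
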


    The previous corollary (combined with Remark \ref{remark:conv_exponent}) shows that we can technically use Algorithm \ref{algo:abstract_descent_method} to obtain sequences $(x^j)_j$ of arbitrary order of R-convergence by choosing sequences $(\eps_j)_j$ and $(\delta_j)_j$ with the respective order. However, clearly, the speed of convergence of $(x^j)_j$ alone is not a good measure for the efficiency of Algorithm \ref{algo:abstract_descent_method}: Since $N_j$ iterations are required to get from $x^{j-1} = x^{j,0}$ to $x^j = x^{j,N_j}$ and since there may be no upper bound for $(N_j)_j$, the effort for computing each $x^j$ may grow with $j$. As a thorough analysis of the boundedness of $(N_j)_j$ would go beyond the scope of this article, we leave this line of research for future work. A more useful quantity to measure the efficiency of Algorithm \ref{algo:abstract_descent_method} would be the speed of convergence of $(z^l)_l$. But since we do not have any useful upper bound for $\min(\| \partial_{\eps} f(z^l) \|)$ for the intermediate steps in-between $(x^j)_j$, the speed of $(z^l)_l$ cannot directly be inferred from Theorem \ref{thm:xj_speed_higher_order}. Nonetheless, there is a way to estimate the speed of the sequence of objective values $(f(z^l))_l$ using $(x^j)_j$, assuming that $(N_j)_j$ is bounded:
       
    \begin{lemma} \label{lem:fz_rate}
        Let $f : \R^n \rightarrow \R$ be locally Lipschitz continuous. Let $(x^j)_j$ be the sequence generated by Algorithm \ref{algo:abstract_descent_method} (cf. \eqref{eq:def_xj}).
        Assume that $x^j \rightarrow x^* \in \R^n$ and that $N_j \leq \Bar{N}$ for all $j \in \N$ for some $\Bar{N} \in \N$. If $r : \R \rightarrow \R^{\geq 0}$ is a monotonically decreasing function such that 
        \begin{align} \label{eq:fz_rate_r}
            \| x^j - x^* \| \leq r(j) \quad \forall j \in \N,
        \end{align}
        then
        \begin{align} \label{eq:fz_rate}
            f(z^l) - f(x^*) \leq L \Tilde{r}(l) \quad \text{for} \quad \Tilde{r}(l) := r \left( \frac{l}{\Bar{N} + 1} - 1 \right) \quad \forall l > N_1 + 1,
        \end{align}
        where $L$ is a Lipschitz constant of $f$ around $x^*$.
    \end{lemma}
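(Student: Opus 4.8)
The plan is to relate each intermediate iterate $z^l$ to the nearest ``checkpoint'' iterate $x^j$ (the points at which $j$ is incremented), using the bound $N_j \leq \bar N$ to control how far $z^l$ can lag behind the decreasing tail $r$. The key structural fact is that $f$ is nonincreasing along the whole sequence $(z^l)_l$: by construction of Algorithm \ref{algo:abstract_descent_method}, every descent step in Step 6 satisfies $f(x^{j,i+1}) \leq f(x^{j,i}) - ct\|v\|^2 \leq f(x^{j,i})$, and Step 4 leaves the function value unchanged, so $(f(z^l))_l$ is monotonically decreasing.

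First I would count indices. The iterates are listed as in \eqref{eq:def_z}, with $N_j + 1$ entries contributed by each value of $j$ (namely $x^{j,0}, \dots, x^{j,N_j}$). Using $N_j \leq \bar N$, the total number of entries up to and including the block for index $j$ is at most $\sum_{k=1}^{j}(N_k+1) \leq j(\bar N + 1)$. Hence if $l > N_1 + 1$ so that $z^l$ lies in the block for some $j \geq 2$, the index $j(l)$ of the block containing $z^l$ satisfies $l \leq j(l)(\bar N + 1)$, i.e.
\begin{align*}
    j(l) \geq \frac{l}{\bar N + 1}.
\end{align*}
Because the first $x^{j(l)-1}$ block has already been completed before $z^l$ appears, I can compare $z^l$ with the checkpoint $x^{j(l)-1}$, whose index is at least $\frac{l}{\bar N + 1} - 1$.

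Next I would use monotonicity together with the Lipschitz estimate. Since $x^{j(l)-1}$ appears in the enumeration \emph{before} $z^l$ and $(f(z^l))_l$ is nonincreasing, it holds $f(z^l) \leq f(x^{j(l)-1})$, and since $x^*$ is the limiting minimum, $f(z^l) - f(x^*) \leq f(x^{j(l)-1}) - f(x^*)$. Applying the local Lipschitz bound with constant $L$ and then the hypothesis \eqref{eq:fz_rate_r} gives
\begin{align*}
    f(z^l) - f(x^*) \leq L\,\| x^{j(l)-1} - x^* \| \leq L\, r(j(l)-1).
\end{align*}
Finally, monotonicity of $r$ converts the lower bound $j(l)-1 \geq \frac{l}{\bar N + 1} - 1$ into the desired upper bound $r(j(l)-1) \leq r\!\left(\frac{l}{\bar N+1}-1\right) = \tilde r(l)$, which yields \eqref{eq:fz_rate}.

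The main obstacle is getting the index bookkeeping exactly right: one must be careful that $x^{j-1} = x^{j,0}$ by \eqref{eq:def_xj}, so the checkpoint iterates are shared between consecutive blocks, and that the ``$-1$'' shift in $\tilde r$ precisely accounts for comparing $z^l$ against the \emph{previous} fully-completed checkpoint rather than the current (possibly incomplete) block. I would also verify that $r$ being defined on all of $\R$ (not just $\N$) is what lets us evaluate $r$ at the non-integer argument $\frac{l}{\bar N+1}-1$, and that monotonicity of $r$ is the only property needed there; the Lipschitz constant $L$ need only be valid in a neighborhood of $x^*$, which is justified since $x^j \to x^*$ forces the tail of $(z^l)_l$ into any such neighborhood.
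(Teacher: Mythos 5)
Your proof is correct and follows essentially the same route as the paper's: the same index count $l \leq (\bar N+1)j_l$, the same comparison of $f(z^l)$ with the previous checkpoint $f(x^{j_l-1}) = f(x^{j_l,0})$ via monotonicity of $(f(z^l))_l$, and the same final chain through the Lipschitz bound and monotonicity of $r$. Your remarks on the index bookkeeping (the shared iterate $x^{j-1}=x^{j,0}$ and the role of $l > N_1+1$ in guaranteeing $j_l \geq 2$) match the paper's own care on these points.
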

    \begin{proof}
        For $l \in \N$ let $j_l \in \N$, $i_l \in \{ 0, \dots, N_{j_l} \}$ be the indices in the notation \eqref{eq:def_z} such that $x^{j_l,i_l} = z^l$. Then $l = j_l + i_l + \sum_{j = 1}^{j_l - 1} N_j$.
        Since $i_l \leq N_{j_l} \leq \Bar{N}$ it holds
        \begin{align*}
            l 
            \leq j_l + i_l + \Bar{N}(j_l - 1) 
            = (\Bar{N} + 1) j_l + i_l - \Bar{N}
            \leq (\Bar{N} + 1) j_l,
        \end{align*}
        i.e., $j_l \geq l / (\Bar{N} + 1)$.
        Let $L$ be a Lipschitz constant of $f$ on an open superset of $\{ z^l : l \in \N \} \cup \{ x^* \}$. Then
        \begin{align*}
            f(x^j) - f(x^*) \leq L \| x^j - x^* \| \leq L r(j) \quad \forall j \in \N.
        \end{align*}
        Since $(f(z^l))_l$ is a monotonically decreasing sequence by construction and $r$ is a monotonically decreasing function by assumption, we have
        \begin{align*}
            f(z^l) - f(x^*)
            &= f(x^{j_l,i_l}) - f(x^*)
            \leq f(x^{j_l,0}) - f(x^*)
            = f(x^{j_l - 1}) - f(x^*) \\
            &\leq L r(j_l - 1)
            \leq L r \left( \frac{l}{\Bar{N} + 1} - 1 \right) \quad \forall l > N_1 + 1,
        \end{align*}
        completing the proof. (Note that $l > N_1 + 1$ is required to have $j_l > 0$.)
    \end{proof}

    As an application of the previous lemma, assume that we are in case (i) of Corollary \ref{cor:GS_rate} with $\eps_j = \keps^j$ for some $\keps \in (0,1)$, i.e., $r(j) = M \keps^j$ and $(x^j)_j$ converges R-linearly with a rate of $\keps$. Then 
    \begin{align*}
        \Tilde{r}(l) 
        = r \left( \frac{l}{\Bar{N} + 1} - 1 \right)
        = M \keps^{\frac{l}{\Bar{N} + 1} - 1}
        = M \keps^{-1} (\keps^{\frac{1}{\Bar{N} + 1}})^l,
    \end{align*}
    so $f(z^l)_l$ still converges R-linearly with a rate of $\keps^{1/(\Bar{N}+1)}$. Unfortunately, higher orders of convergence are not preserved: If $r(j) = M \keps^{2^j}$ then $(r(j))_j$ converges Q-quadratically, but $(\Tilde{r}(l))_l$ only converges Q-superlinearly and not Q-quadratically (unless $\Bar{N} = 0$).  

    Example \ref{example:f_rate_sharp} in the appendix shows that the estimate in Lemma \ref{lem:fz_rate} is tight (up to constant factors). However, note that in the proof of Lemma \ref{lem:fz_rate}, we essentially calculated an estimate for the case where $N_j = \bar{N}$ for all $j \in \N$. So unless $N_j$ is close to constant, this may lead to a large overestimation. In particular, any overestimation that is already present in the estimate \eqref{eq:fz_rate_r} is amplified by this. (This can be seen in Example \ref{example:maxq} below.) As such, we believe that Lemma \ref{lem:fz_rate} has more theoretical than practical relevance.

    Finally, it is worth pointing out that for $p = 1$, $(\delta_j)_j$ does not have to vanish for Corollary \ref{cor:GS_rate} to be applicable. We will briefly discuss the implications of this in the outlook in Section \ref{sec:outlook}.

    \subsection{Numerical experiments}

    In the following, we analyze the behavior of Algorithm \ref{algo:abstract_descent_method} experimentally in light of Corollary \ref{cor:GS_rate}. For our computations, we have implemented the method described in \cite{GP2021,G2022}, with the bisection method from \cite{G2024}, in Matlab. In addition to the fully deterministic computation of $W$ in Step 2, we also added an option for initialization with a number of randomly sampled gradients from $\Bcl_\eps(x^{j,i})$, which makes the method behave similar to the classical gradient sampling method from \cite{BLO2005,BCL2020} (and, in particular, similar to the abstract method where $W = \partial_{\eps_j} f(x^{j,i})$). The code is available at \url{https://github.com/b-gebken/DGS}, including scripts for reproducing all results from this section. Concerning the different choices of parameters for the method we make in this section, we stress that they are mainly chosen in a way that nicely highlights the behavior of the method, without trying to achieve the best possible performance. Note that Corollary \ref{cor:GS_rate} only prescribes the speed of convergence of $(x^j)_j$ up to the unknown constant $M$, which has no impact on the rate or order of convergence. For our visualizations, we choose it a posteriori in a way that makes it easy to compare both sides of \eqref{eq:GS_rate_p_eq_1} and \eqref{eq:GS_rate_p_lar_2}.

    In the first experiment, we show that Corollary \ref{cor:GS_rate} yields a tight bound (up to $M$) for the speed of convergence of $(x^j)_j$. For the general Theorem \ref{thm:xj_speed_higher_order}, this was shown theoretically with the function from Example \ref{example:xj_speed_estim_equal}, and we can use the same function here:

    \begin{example} \label{example:Cor2_sharp}
        Consider the function $f$ from Example \ref{example:xj_speed_estim_equal} for $p = 3$, i.e.,
        \begin{align*}
            f : \R^2 \rightarrow \R, \quad x \mapsto \max(1/3 |x_1|^3, |x_2|).
        \end{align*}
        For the parameters of Algorithm \ref{algo:abstract_descent_method}, we choose
        \begin{align*}
            x^0 = (10,0)^\top, \ \eps_j = 0.01 \cdot 0.85^j, \ \delta_j = 20 \cdot 0.75^j, \ c = 0.9.
        \end{align*}
        The set $W$ in Step 2 is obtained by first evaluating the gradient in $100$ uniformly random points from $\Bcl_\eps(x^{j,i})$ and then deterministically adding gradients (as described in \cite{GP2021,G2022,G2024}) until the approximation is sufficient (cf. \eqref{eq:sufficient_decrease}). For the resulting sequences $(x^j)_j$ and $(z^l)_l$ (cf. \eqref{eq:def_xj} and \eqref{eq:def_z}), the distances to the minimum $x^* = (0,0)^\top$ are shown in Figure \ref{fig:example_num_sharp}.
        \begin{figure}
            \centering
            \parbox[b]{0.49\textwidth}{
                \centering 
                \includegraphics[width=0.45\textwidth]{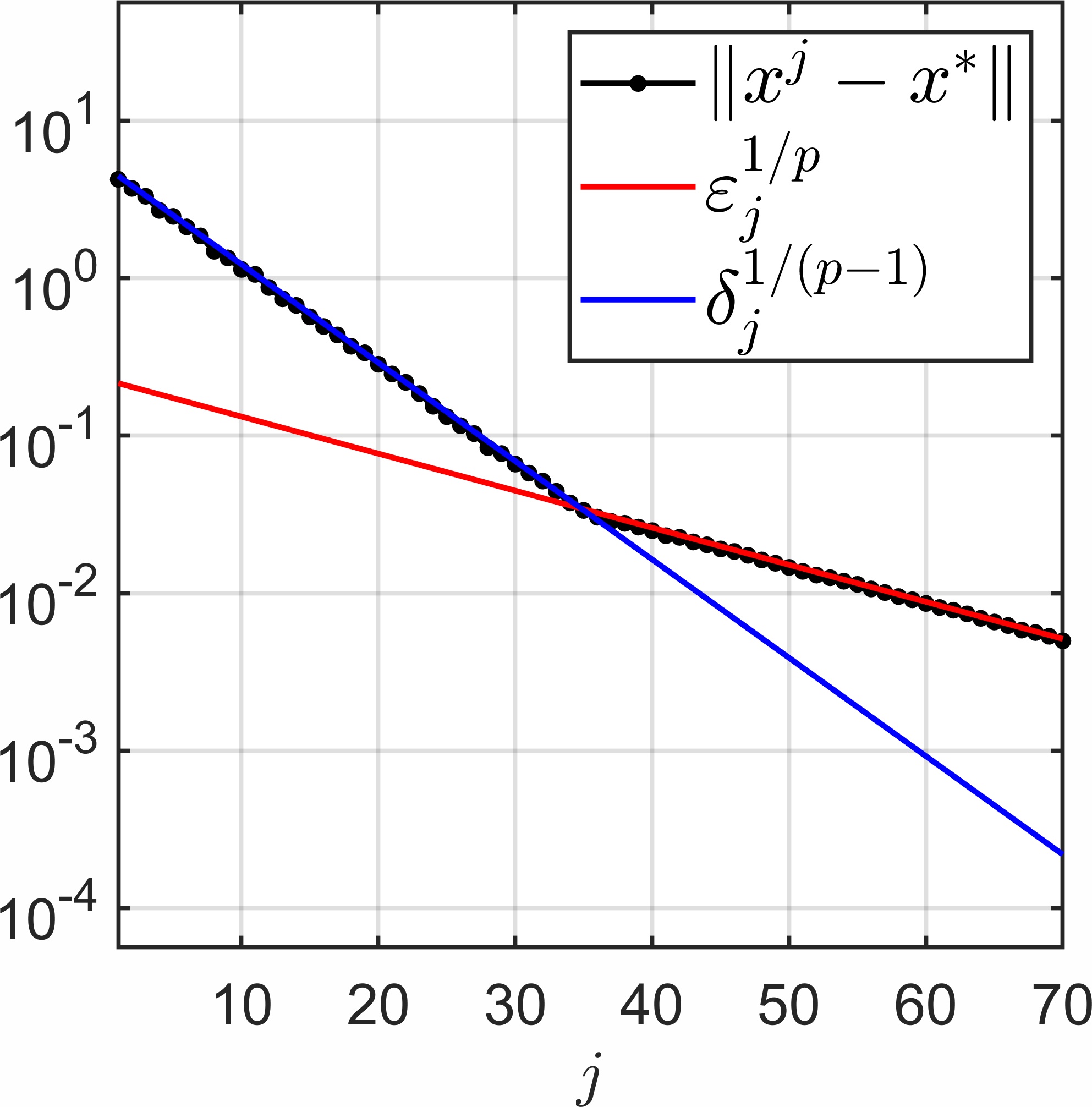}\\
                (a)
    		}
            \parbox[b]{0.49\textwidth}{
                \centering 
                \includegraphics[width=0.45\textwidth]{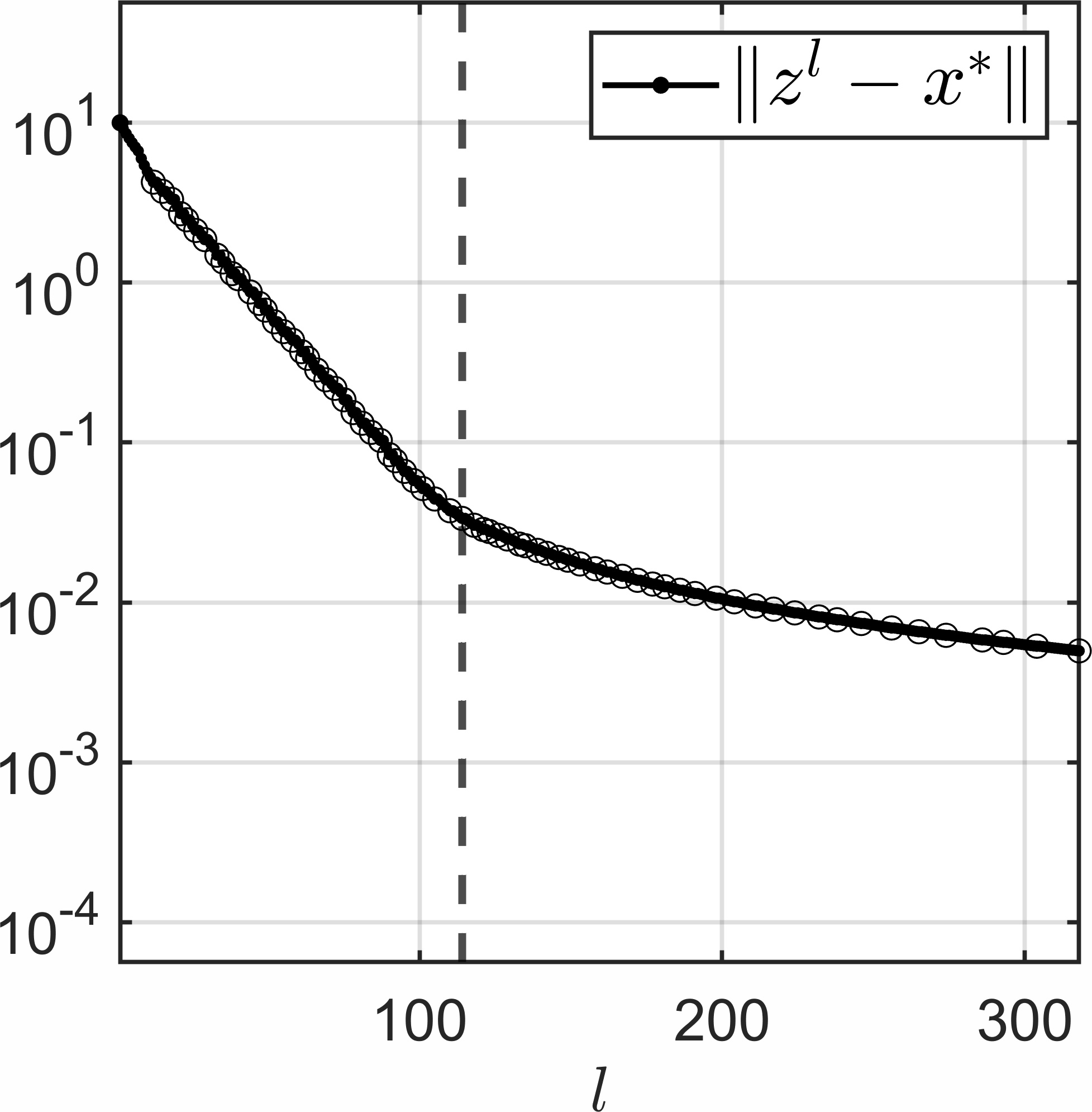}\\
                (b)
    		}
            \caption{(a) The sequences $(\| x^j - x^* \|)_j$, $(\eps_j)_j$ and $(\delta_j)_j$ in Example \ref{example:Cor2_sharp}. (b) The full sequence $(\| z^l - x^* \|)_l$. The circle markers indicate the indices at which $(x^j)_j$ appears within $(z^l)_l$, i.e., at which $\eps_j$ and $\delta_j$ change. The dashed line shows the the iteration at which $\delta_j$ becomes smaller than $\eps_j$ (cf. (a)).}
            \label{fig:example_num_sharp}
        \end{figure}%
        In Figure \ref{fig:example_num_sharp}(a) we see that $\| x^j - x^* \|$ is close to the maximum of $\eps_j^{1/p}$ and $\delta_j^{1/(p-1)}$ (i.e., $M \approx 1$ in Corollary \ref{cor:GS_rate}). For $j < 35$, the maximum equals $\eps_j^{1/p}$ and for $j \geq 35$, it equals $\delta_j^{1/(p-1)}$. In particular, the rate of convergence abruptly changes at $j = 35$. Figure \ref{fig:example_num_sharp}(b) shows the entire sequence $(z^l)_l$ produced by the algorithm. The circle markers highlight the iterates at which $(x^j)_j$ appears within $(z^l)_l$ and the dashed line shows the iterate $l$ where $x^{35} = z^l$. Also here, although less abrupt, we see a change in the rate of decrease of $\| z^l - x^* \|$.
    \end{example}
    
    In the next example, we consider the well-known test function \emph{MAXQ} from \cite{S1989,HMM2004}. This function was also considered in \cite{HSS2017}, where it was an example for a function to which the results from said article cannot be applied. (Actually, it was shown that the choice of $(\eps_j)_j$ and $(\delta_j)_j$ suggested in \cite{HSS2017}, Section 5, leads to slow convergence of the gradient sampling method from \cite{BLO2005}.) In contrast to this, Corollary \ref{cor:GS_rate} is applicable:

    \begin{example} \label{example:maxq}
        Consider the function
        \begin{align*}
            f : \R^n \rightarrow \R, \quad x \mapsto \max_{i \in \{1,\dots,n\}} x_i^2.
        \end{align*}
        It is easy to show that $x^* = (0,\dots,0)^\top \in \R^n$ is the unique global minimum of $f$ with order $p = 2$ and constant $1/n$. Furthermore, by Lemma \ref{lem:convex_higher_order}, property \eqref{eq:p_order_min_semismooth} holds in $x^*$. Let $n = 10$. For the parameters of Algorithm \ref{algo:abstract_descent_method}, we choose
        \begin{align*}
            x^0 = (1,\dots,5,-6,\dots,-10)^\top, \
            \eps_j = 10 \cdot 0.5^j, \
            \delta_j = 10 \cdot 0.5^j, \
            c = 0.9.
        \end{align*}
        The set $W$ is determined as in Example \ref{example:Cor2_sharp}. Figure \ref{fig:example_maxq}(a) shows the distance of the resulting $(x^j)_j$ to the minimum.
        \begin{figure}
            \centering
            \parbox[b]{0.32\textwidth}{
                \centering 
                \includegraphics[width=0.32\textwidth]{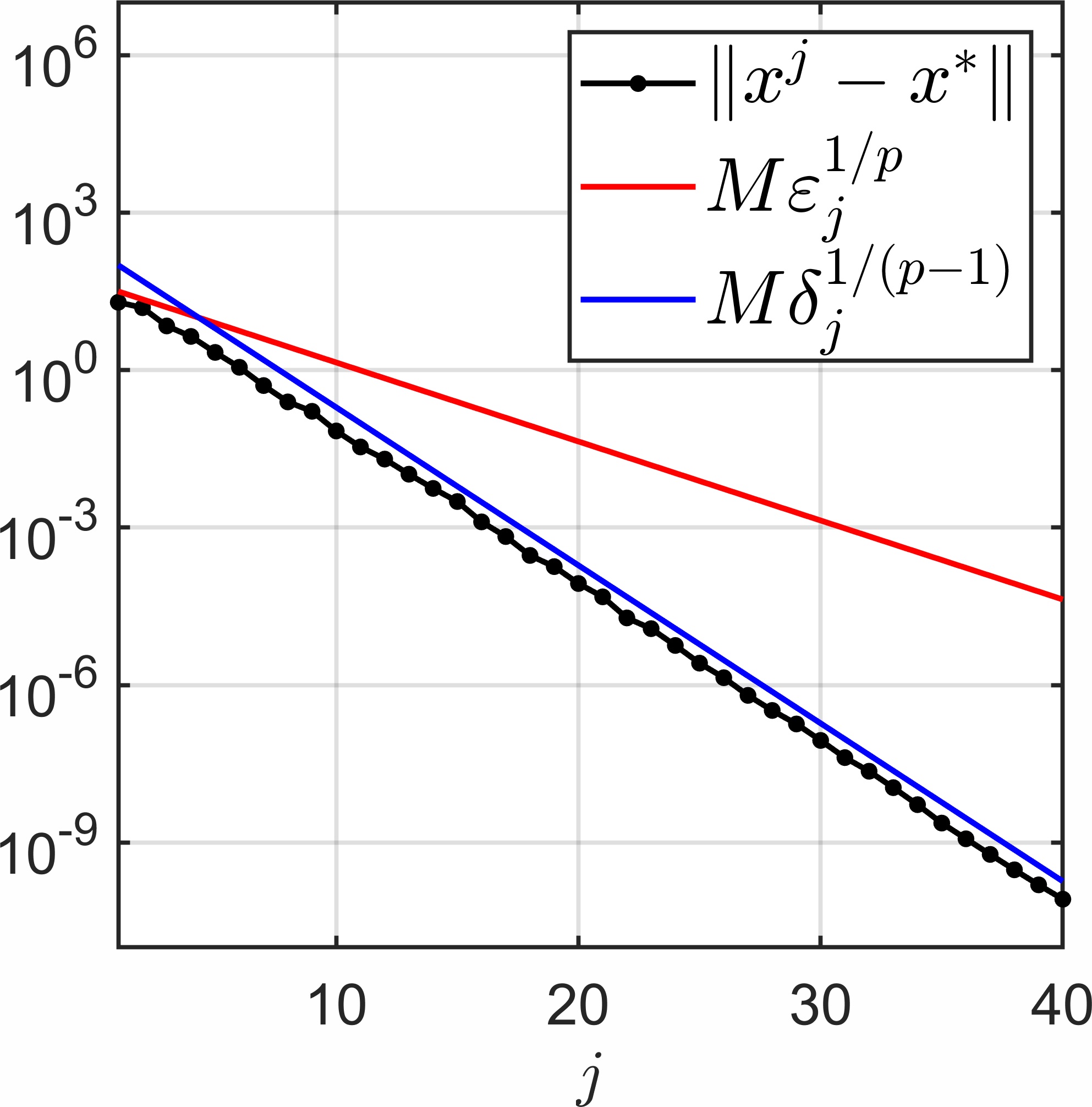}\\
                (a)
    		}
            \parbox[b]{0.32\textwidth}{
                \centering 
                \includegraphics[width=0.32\textwidth]{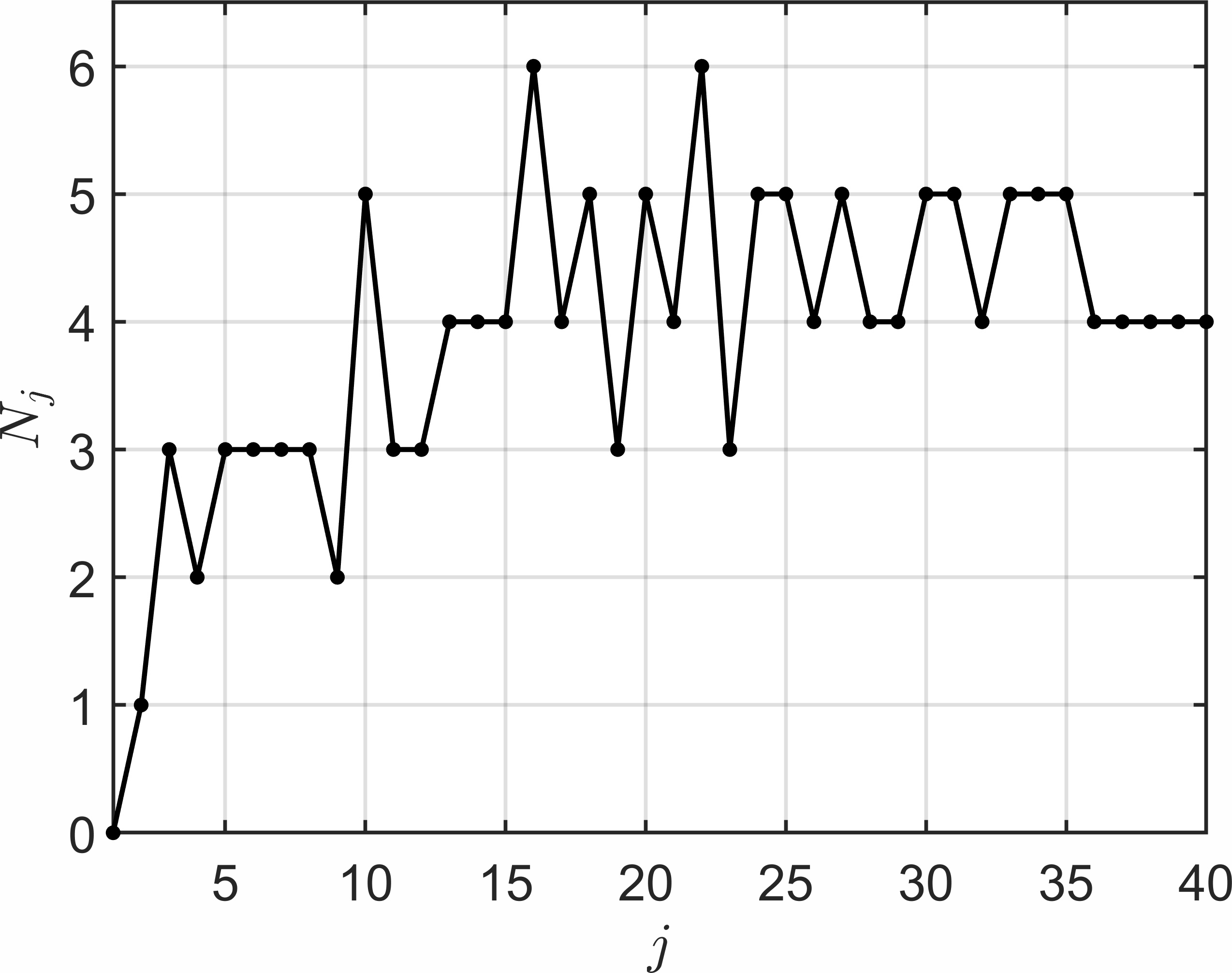}\\
                (b)
    		}
            \parbox[b]{0.32\textwidth}{
                \centering 
                \includegraphics[width=0.32\textwidth]{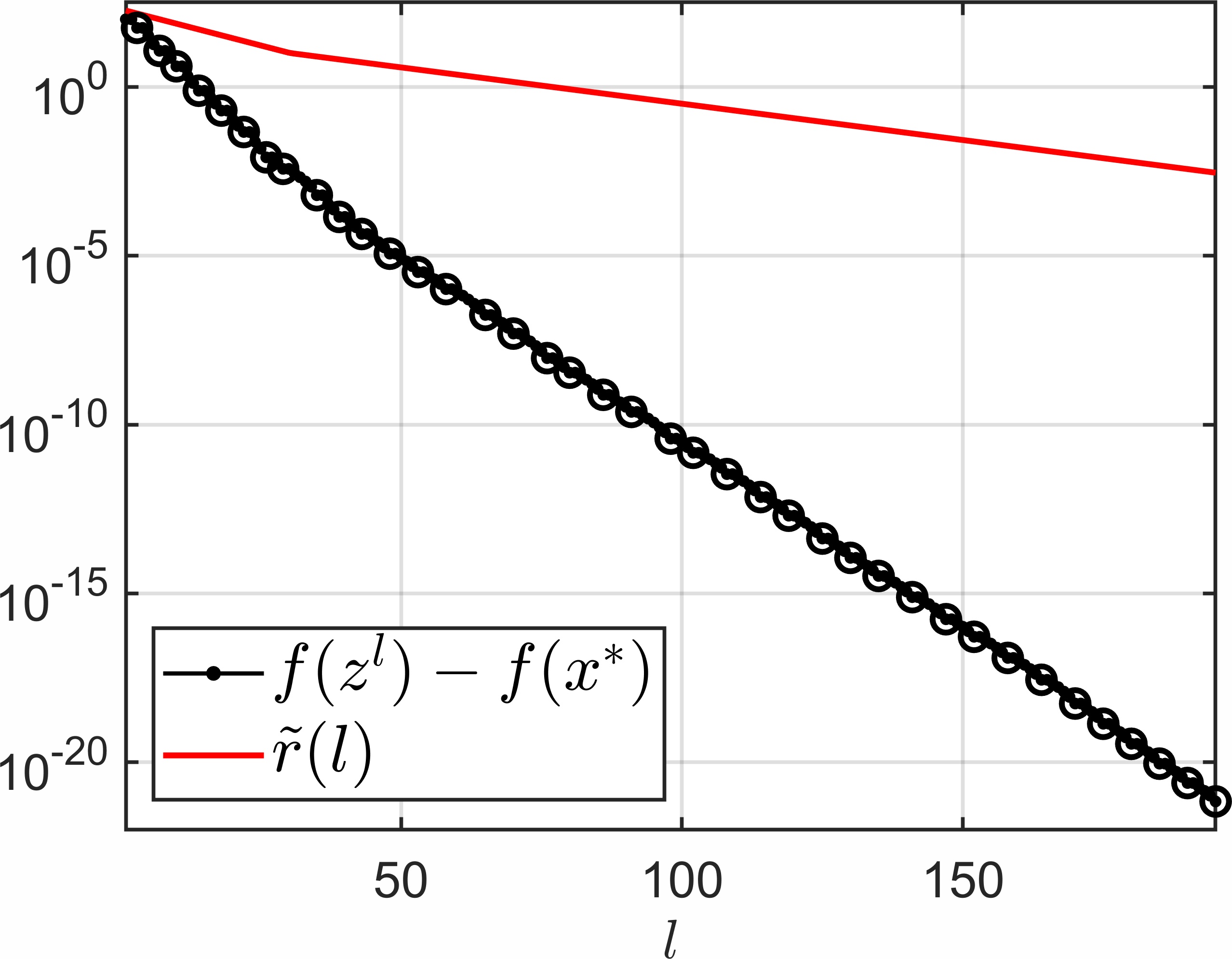}\\
                (c)
    		}
            \caption{(a) The sequences $(\| x^j - x^* \|)_j$, $(M \eps_j^{1/p})_j$ and $(M \delta_j^{1/(p-1)})_j$ in Example \ref{example:maxq} for $M = 10$. (b) The number of iterations for each $j$. (c) The sequences $(f(z^l) - f(x^*))_l$ and $(\Tilde{r}(l))_l$ from Lemma \ref{lem:fz_rate} with $r$ chosen as the right-hand side of \eqref{eq:GS_rate_p_lar_2}.}
            \label{fig:example_maxq}
        \end{figure}%
        As expected, it suggests that $(x^j)_j$ converges R-linearly. But in contrast to Example \ref{example:Cor2_sharp}, we do not obtain a tight upper bound from Corollary \ref{cor:GS_rate}. Figure \ref{fig:example_maxq}(b) shows the sequence $(N_j)_j$, i.e., the number of iterations Algorithm \ref{algo:abstract_descent_method} executed for each fixed $j$. It suggests that $(N_j)_j$ is bounded, so by Lemma \ref{lem:fz_rate}, $(f(z^l))_l$ converges R-linearly as well. Figure \ref{fig:example_maxq}(c) shows the distance of $(f(z^l))_l$ to $f(x^*)$ and indeed, it appears to converge R-linearly. However, as discussed at the end of Section \ref{sec:descent_methods:application_of_results}, we also see that Lemma \ref{lem:fz_rate} only yields a rough overestimate for the actual speed of convergence.
    \end{example}

    In the previous two examples, we constructed $(\eps_j)_j$ and $(\delta_j)_j$ so that they vanish Q-linearly. This is reasonable, since due to the first-order nature of Algorithm \ref{algo:abstract_descent_method}, we can only expect it to generate sequences with R-linear convergence at best. In our final example, we demonstrate the effect of choosing Q-superlinearly vanishing sequences instead: 

    \begin{example} \label{example:superlinear}
        Consider the function
        \begin{align*}
            f : \R^n \rightarrow \R, \quad x \mapsto \max_{i \in \{1,\dots,n\}} x_i + \frac{1}{2} \| x \|^2,
        \end{align*}
        which belongs to the class of functions introduced in \cite{N2004}, Section 3.2.1. It is easy to show that $x^* = (-1/n,\dots,-1/n)^\top \in \R^n$ is the unique global minimum of $f$ with order $p = 2$ and constant $\beta = 1/2$. Furthermore, by Lemma \ref{lem:convex_higher_order}, property \eqref{eq:p_order_min_semismooth} holds in $x^*$. Let $n = 100$. For the parameters of Algorithm \ref{algo:abstract_descent_method}, we choose
        \begin{align*}
            x^0 = (10,\dots,10)^\top, \
            \eps_j = 10 \cdot 0.75^{j^2}, \
            \delta_j = 10 \cdot 0.75^{j^2}, \
            c = 0.9.
        \end{align*}
        The set $W$ in Step 2 is obtained as in \cite{GP2021,G2022,G2024} (without any random sampling). The distance of the resulting sequence $(x^j)_j$ to the minimum is shown in Figure \ref{fig:example_superlinear}(a).
        \begin{figure}
            \centering
            \parbox[b]{0.49\textwidth}{
                \centering 
                \includegraphics[width=0.35\textwidth]{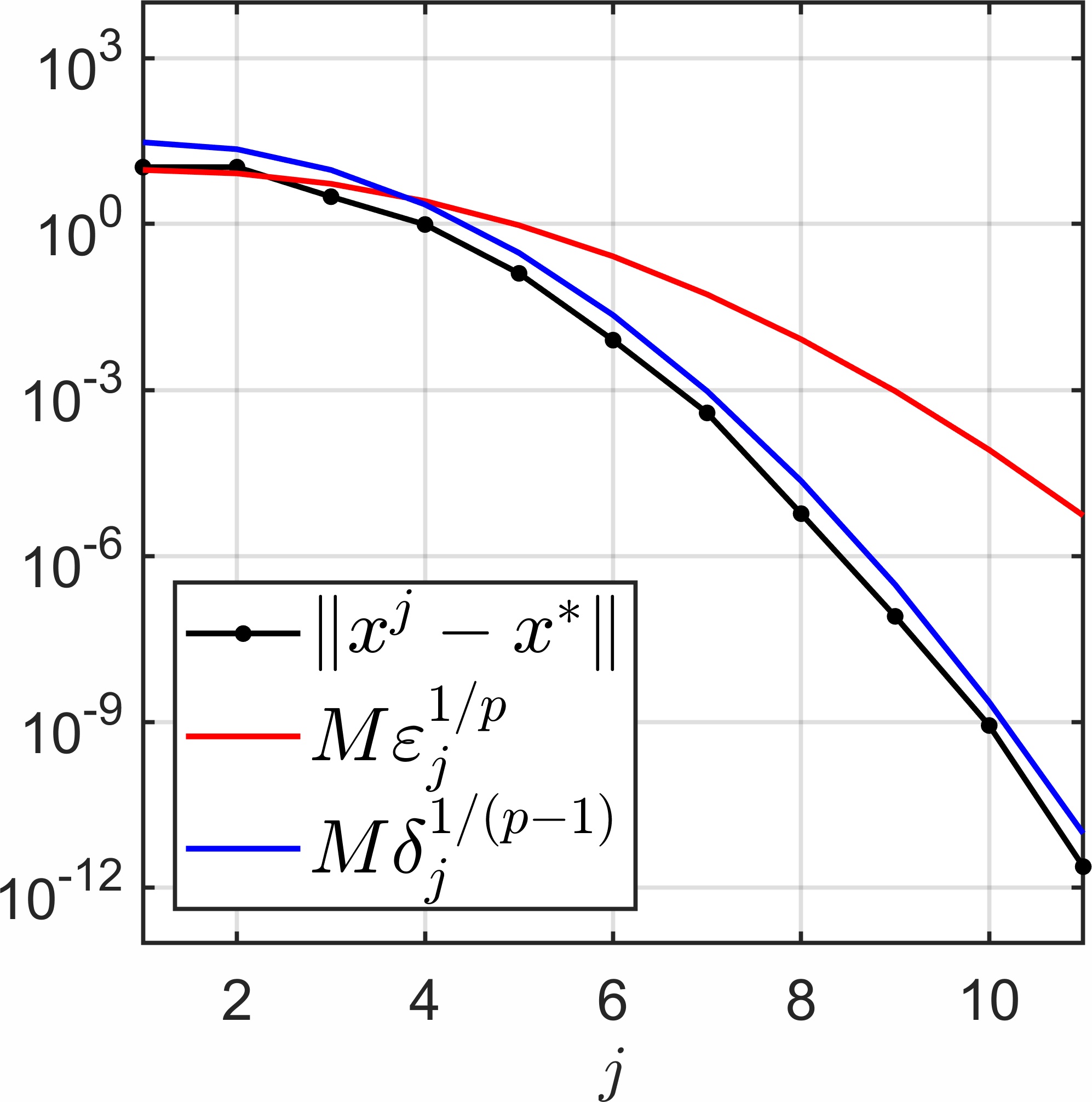}\\
                (a)
    		}
            \parbox[b]{0.49\textwidth}{
                \centering 
                \includegraphics[width=0.46\textwidth]{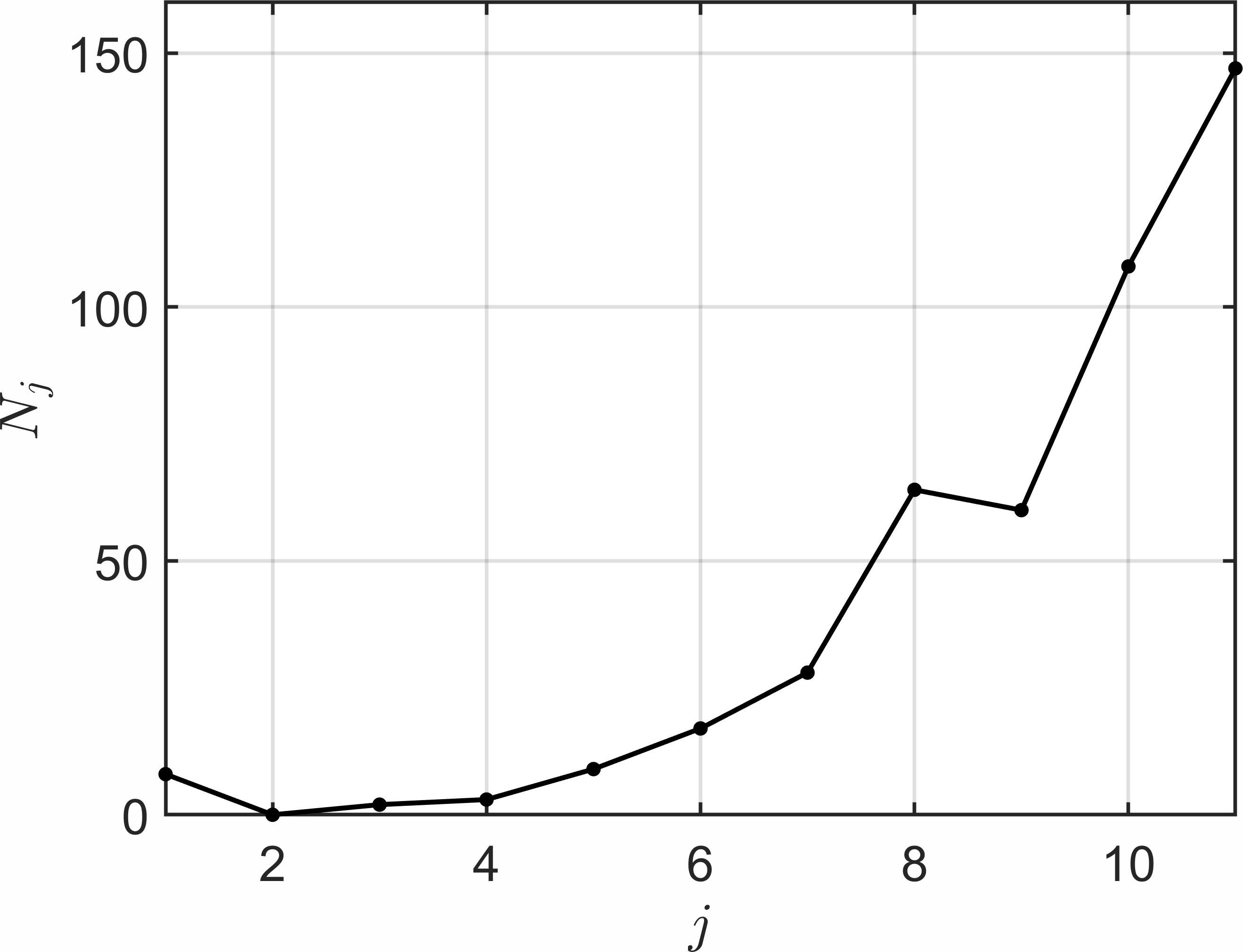}\\
                (b)
    		}
            \caption{(a) The sequences $(\| x^j - x^* \|)_j$, $(M \eps_j^{1/p})_j$ and $(M \delta_j^{1/(p-1)})_j$ in Example \ref{example:superlinear} for $M = 3$. (b) The number of iterations $N_j$ for each $j$.}
            \label{fig:example_superlinear}
        \end{figure}%
        As expected from Corollary \ref{cor:GS_rate}, $(x^j)_j$ appears to converge R-superlinearly. However, Figure \ref{fig:example_superlinear}(b) shows that the number of iterations required for producing each $x^j$ grows exponentially. Thus, at least in this case, there appears to be no benefit when choosing Q-superlinearly vanishing $(\eps_j)_j$ and $(\delta_j)_j$.
    \end{example}

    \section{Conclusion and outlook} \label{sec:outlook}

    In this article, we showed that for sequences $(x^j)_j \in \R^n$, $(\eps_j)_j, (\delta_j)_j \in \R^{\geq 0}$ with $x^j \rightarrow x^*$ and $\min(\| \partial_{\eps_j} f(x^j) \|) \leq \delta_j$ for all $j \in \N$, the speed of convergence of $(x^j)_j$ can be derived from the speeds of $(\eps_j)_j$ and $(\delta_j)_j$ (Theorem \ref{thm:xj_speed_higher_order}), provided that $x^*$ satisfies a polynomial growth property (Definition \ref{def:order_minimum}) and the higher-order semismoothness property \eqref{eq:higher_order_semismooth}. If $f$ grows linearly around $x^*$, then \eqref{eq:higher_order_semismooth} is implied by standard semismoothness. If the order of growth is higher than linear, then \eqref{eq:higher_order_semismooth} is more difficult to verify. However, we were able to show that piecewise differentiability and a convexity assumption w.r.t. the higher-order derivatives \eqref{eq:higher_order_pos_def} are sufficient for \eqref{eq:higher_order_semismooth}. As an application, we considered descent methods based on the Goldstein $\eps$-subdifferential, where $(\eps_j)_j$ and $(\delta_j)_j$ are inputs and $(x^j)_j$ is the output of an algorithm. In numerical experiments, we showed how our results can be used to predict and control the behavior of the algorithm.

    Fur future work, there are multiple interesting directions:
    \begin{itemize}
        \item For minima of order $p = 2$, we required the convexity assumption \eqref{eq:higher_order_pos_def} (see also Corollary \ref{cor:p_eq_2}). In \cite{HSS2017}, it appears that convexity was not needed, so there might be a way for us to drop this assumption as well. However, since our higher-order semismoothness property \eqref{eq:higher_order_semismooth} and also property \eqref{eq:p_order_min_semismooth} may fail to hold in the nonconvex case (cf. Example \ref{example:crescent}), we would have to find a new way to solve the issue highlighted in Example \ref{example:why_higher_order_semismooth}.
        \item For minima of order $p = 1$, $(\delta_j)_j$ is not required to vanish for Corollary \ref{cor:GS_rate} to be applicable. This could have interesting implications for gradient sampling methods, since $\| v \| \geq \bar{\delta}$ means that the theoretical descent we can estimate from \eqref{eq:sufficient_decrease} would improve (compared to vanishing $\| v \|$). The general convergence of the method would have to be proven again, but we expect that this is possible. 
        \item Combination of Corollary \ref{cor:GS_rate} and Lemma \ref{lem:fz_rate} yields a connection between the speed of convergence of Algorithm \ref{algo:abstract_descent_method} and boundedness of the sequence $(N_j)_j$. We believe that this may enable new proofs of linear convergence of Algorithm \ref{algo:abstract_descent_method} for classes of objective functions which could not be treated in \cite{HSS2017}. In particular, this could provide new guidelines how to choose the parameters $(\eps_j)_j$ and $(\delta_j)_j$ to obtain good performance.
        \item Throughout Section \ref{sec:descent_methods} we carefully only spoke about the speed of convergence of the sequence(s) generated by Algorithm \ref{algo:abstract_descent_method}. To properly analyze the efficiency of the algorithm itself, one has to also factor in the cost of computing the approximation $W$ in Step 2. For classical gradient sampling, this cost is clearly fixed, with the downside that insufficient approximations may be generated. For the method in \cite{GP2021,G2022,G2024}, to the best of the authors' knowledge, no upper bound on the cost for computing $W$ (maybe depending on the dimension $n$ and a Lipschitz constant $L$) has been proven so far.
        \item Since Corollary \ref{cor:GS_rate} is not limited to linear convergence, it might be usable as a tool to analyze the speed of convergence of higher-order methods for nonsmooth optimization like \cite{LV1998,MS2005,LO2013,G2022b}, or to inspire entirely new higher-order methods. 
    \end{itemize}

\backmatter

\noindent \textbf{Acknowledgements.} \quad This research was funded by Deutsche Forschungsgemeinschaft (DFG, German Research Foundation) – Projektnummer 545166481.

\begin{appendices}

\section{}

\begin{example} \label{example:higher_order_convex}
        Consider the function $f : \R^2 \rightarrow \R$, $x \mapsto \max_{i \in \{1,\dots,4\}} f_i(x)$ with
        \begin{alignat*}{2}
            &f_1(x) = (x_1 + 1)^3 + x_2^3, && \quad f_2(x) = (x_1 + 1)^3 - x_2^3,  \\
            &f_3(x) = -(x_1-1)^3 + x_2^3,  && \quad f_4(x) = -(x_1-1)^3 - x_2^3.
        \end{alignat*}
        The graph of $f$ is shown in Figure \ref{fig:example_higher_order_convex_and_crescent}(a).
        \begin{figure}
            \centering
            \parbox[b]{0.49\textwidth}{
                \centering 
                \includegraphics[width=0.49\textwidth]{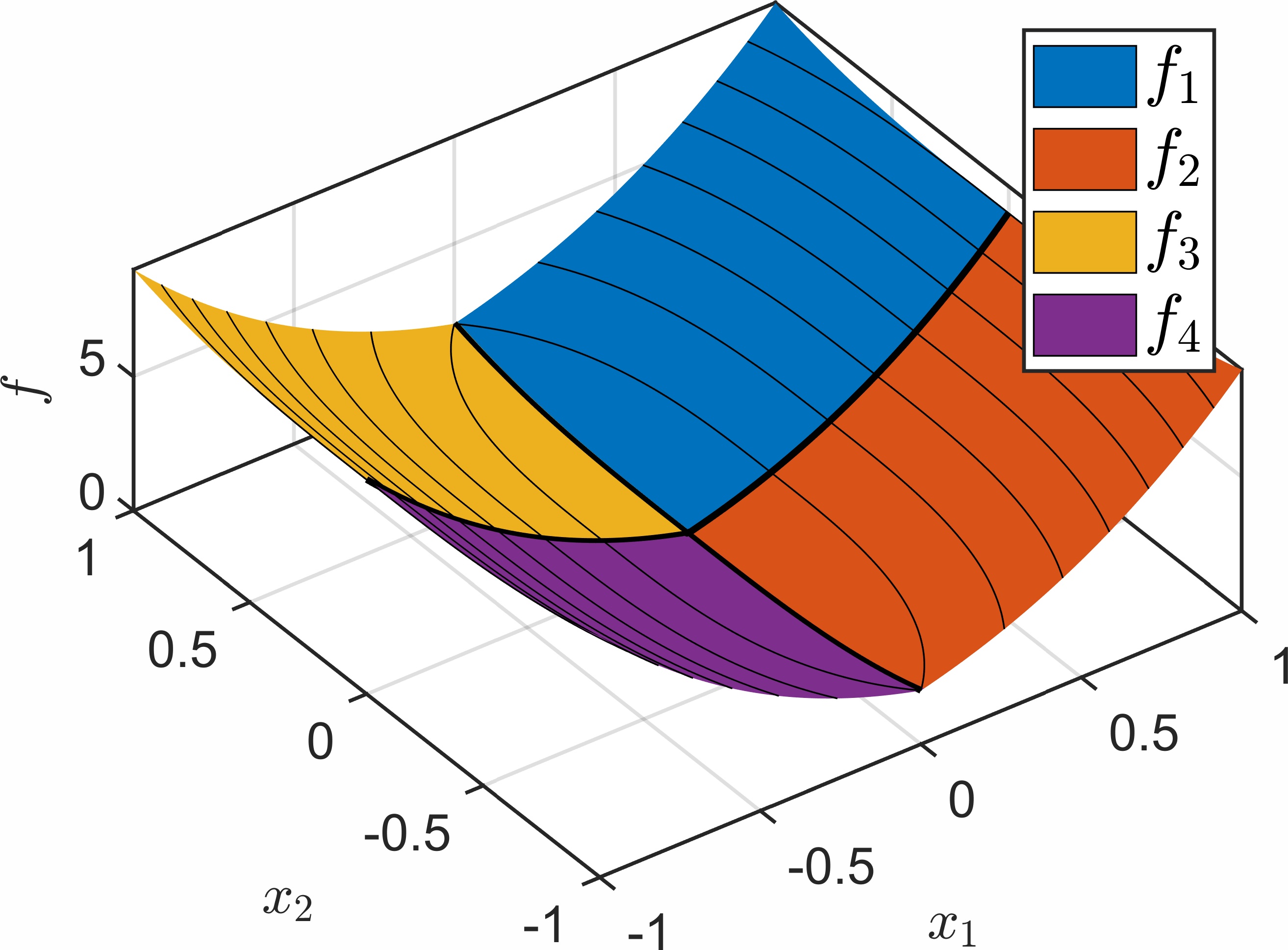}\\
                (a)
    		}
            \parbox[b]{0.49\textwidth}{
                \centering 
                \includegraphics[width=0.49\textwidth]{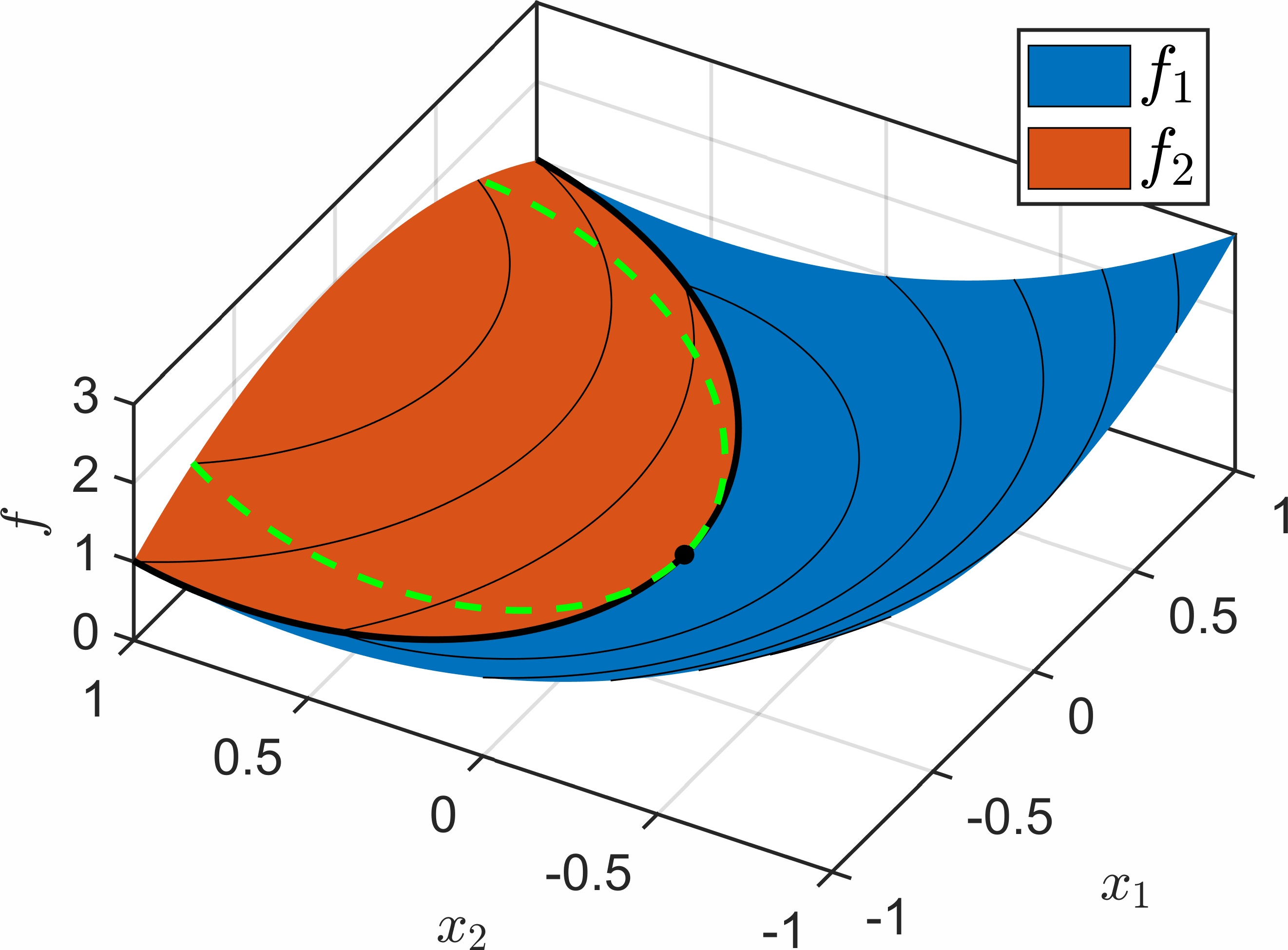}\\
                (b)
    		}
            \caption{(a) The graph $f$ in Example \ref{example:higher_order_convex}. (b) The graph of $f$ (Crescent) in Example \ref{example:crescent}. The dashed green line shows the set $D_{3/4}$ (mapped onto the graph).}
            \label{fig:example_higher_order_convex_and_crescent}
        \end{figure}  
        It is easy to see that $x^* = (0,0)^\top$ is the unique global minimum with $f(x) \geq f(x^*) + \| x \|^3$ for all $x \in \R^2$ (and $f(x) = f(x^*) + \| x \|^3$ for $x \in \{ 0 \} \times \R$), so $x^*$ is a minimum of order $3$ with constant $\beta = 1$. Computing the higher-order derivatives of $f_1$ in $x^*$, we obtain
        \begin{align*}
            \deriv^{(2)} f_1(x^*)(d)^2 &= 6 d_1^2, \\
            \deriv^{(3)} f_1(x^*)(d)^3 &= 6 (d_1^3 + d_2^3) 
        \end{align*}
        for all $d \in \R^2$. Thus for the open set $V_1 := \{ d \in \R^n : d_1 > -d_2 \}$ it holds $\deriv^{(3)} f_1(x^*)(d)^3 \geq 0$ for all $d$. Since $f_1$ is active (i.e., $1 \in A(x)$) if and only if $x_1 \geq 0$ and $x_2 \geq 0$, we have $C_1(x^*) = \R^{\geq 0} \times \R^{\geq 0} \subseteq V_1$. Analogously, it can be shown that \eqref{eq:higher_order_pos_def} holds for $f_2$, $f_3$ and $f_4$ as well, such that Lemma \ref{lem:convex_higher_order} can be applied to see that $f$ has the higher-order semismoothness property \eqref{eq:higher_order_semismooth}.
    \end{example}

    \begin{example} \label{example:crescent}
        Consider the function $f : \R^2 \rightarrow \R$, $x \mapsto \max(f_1(x),f_2(x))$ with
        \begin{align*}
            &f_1(x) = x_1^2 + (x_2 - 1)^2 + x_2 - 1, \\
            &f_2(x) = -x_1^2 - (x_2 - 1)^2 + x_2 + 1,
        \end{align*}
        which is the well-known test function \emph{Crescent} from \cite{K1985} with unique global minimum $x^* = (0,0)^\top$. Its graph is shown in Figure \ref{fig:example_higher_order_convex_and_crescent}(b). The set of nonsmooth points $\Omega$ of $f$ is a circle with radius $1$ around $(0,1)^\top$. It is easy to show that $x^*$ is a minimum of order $2$ (with constant $\beta = 1/2$). 
        Now for some $r \in (0,1)$ let $d' \in \R^n$ with $\| d'\| = 1$ and $t > 0$ so that
        \begin{align*}
            x^* + t d' \in \{ (r \cos(\theta), r \sin(\theta) + r)^\top : \theta \in [0, 2 \pi) \} =: D_r.
        \end{align*}
        (For example, $D_{3/4}$ is shown in Figure \ref{fig:example_higher_order_convex_and_crescent}(b).) Then $A(x^* + t d') = \{ 2 \}$ and a straight-forward calculation shows that
        \begin{align*}
            \frac{\langle \nabla f_2(x^* + t d'), d' \rangle}{t}
            = \frac{-4 r + 3}{2 r},
        \end{align*}
        where the right-hand side does not depend on $t$ and $d'$. Thus, for $r = 3/4$, the fraction on the left-hand side of \eqref{eq:p_order_min_semismooth} is zero and for $r \in (3/4,1)$, it is even negative. 
    \end{example}

    \begin{example} \label{example:xj_speed_estim_equal}
        For $p \in \N$ consider the function
        \begin{align*}
            f : \R^2 \rightarrow \R, \quad x \mapsto \max(p^{-1} |x_1|^p, |x_2|) = \max(p^{-1} x_1^p, -p^{-1} x_1^p, x_2, -x_2).
        \end{align*}
        It is easy to see that $x^* = (0,0)^\top$ is the unique minimum of $f$ with $f(x^*) = 0$. The graph of $f$ and set of nonsmooth points
        \begin{align*}
            \Omega = \{ (t,p^{-1} |t|^p) : r \in \R \} \cup \{ (t,-p^{-1} |t|^p) : r \in \R \}
        \end{align*}
        are shown in Figure \ref{fig:example_xj_speed_estim_equal_and_f_rate_sharp}(a) for $p = 2$.
        \begin{figure}
            \centering
            \parbox[b]{0.32\textwidth}{
                \centering 
                \includegraphics[width=0.32\textwidth]{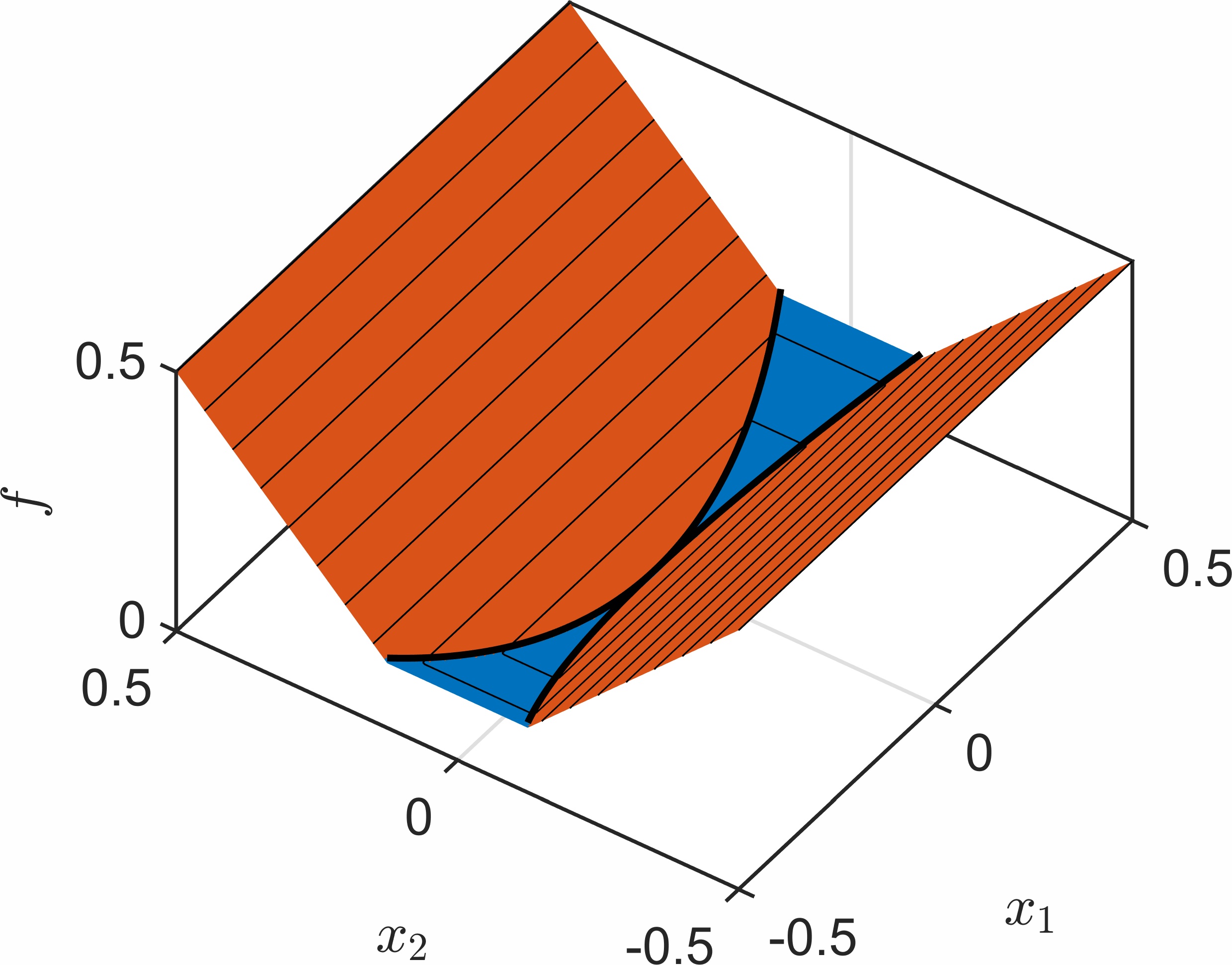}\\
                (a)
    		}
            \parbox[b]{0.32\textwidth}{
                \centering 
                \includegraphics[width=0.32\textwidth]{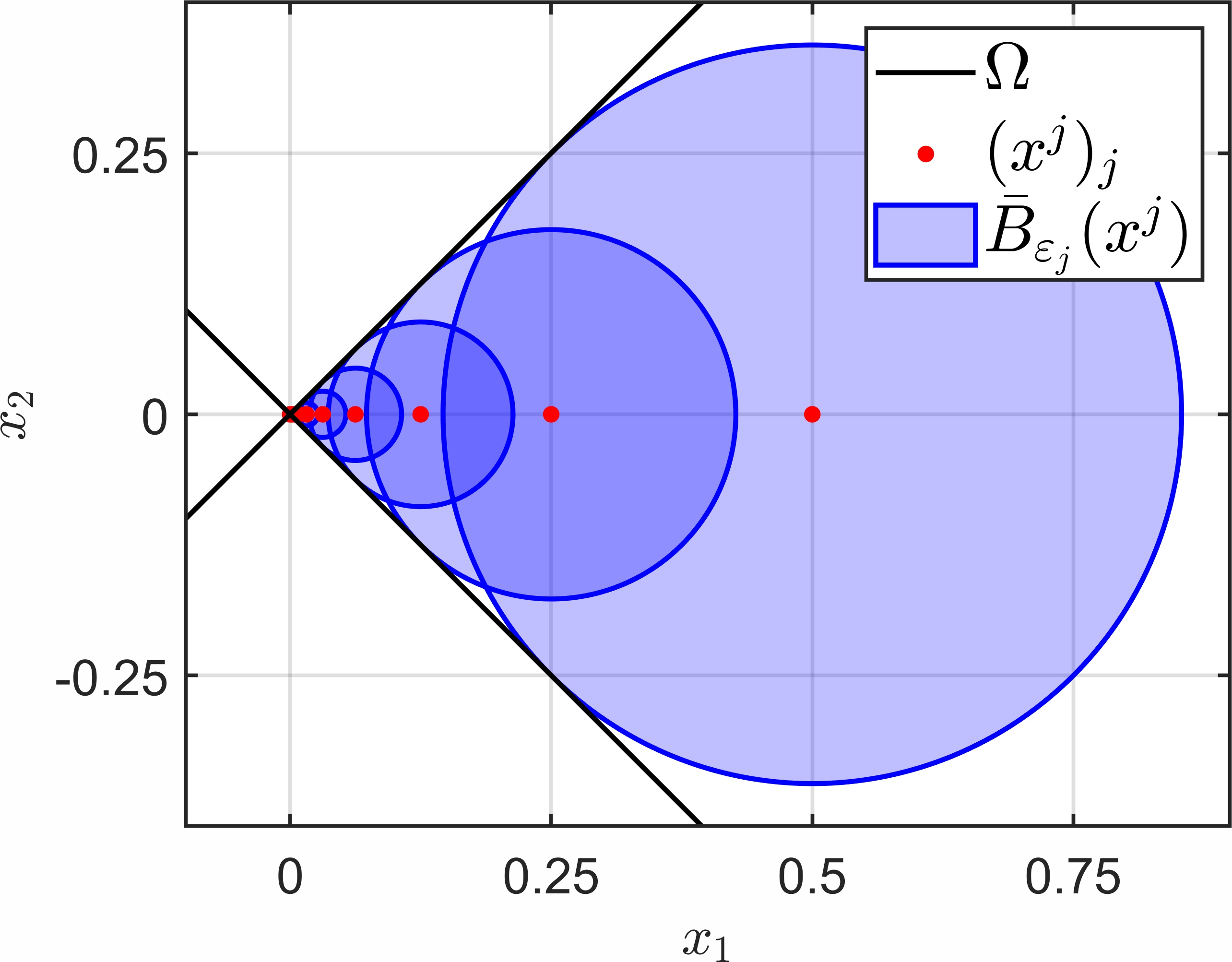}\\
                (b)
    		}
            \parbox[b]{0.32\textwidth}{
                \centering 
                \includegraphics[width=0.31\textwidth]{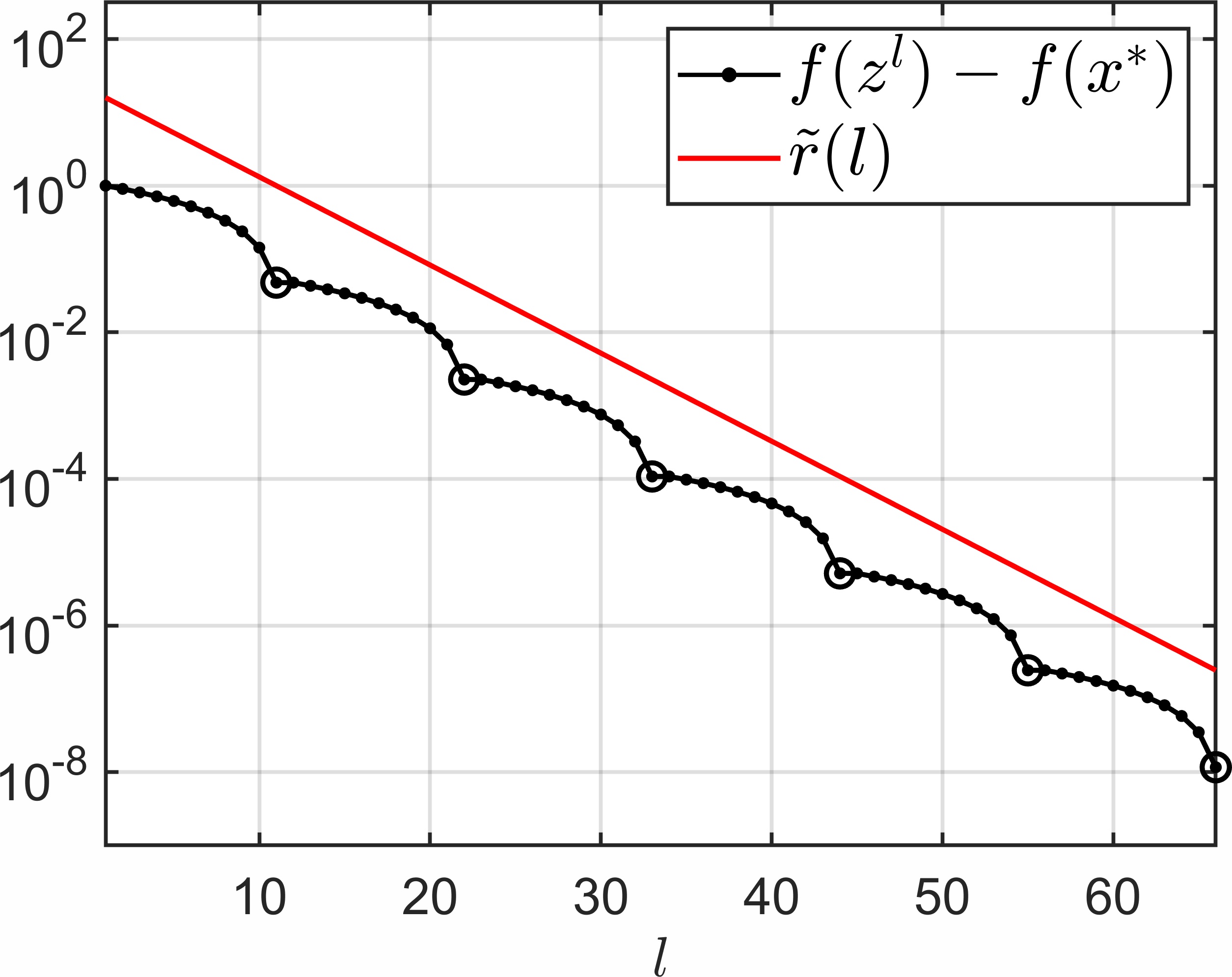}\\
                (c)
    		}
            \caption{(a) The graph of $f$ in Example \ref{example:xj_speed_estim_equal} for $p = 2$. (b) The set of nonsmooth points $\Omega$, the sequence $(x^j)_j$ and the ball $\Bcl_{\eps_j}(x^j)$ in Example \ref{example:xj_speed_estim_equal}(i) for $\kappa = 1/2$. (c)  The sequences $(f(z^l) - f(x^*))_l$ and $(\Tilde{r}(l))_l$ in Example \ref{example:f_rate_sharp}. The black circles show the subsequence $(f(x^j) - f(x^*))_j$ within $(f(z^l) - f(x^*))_l$ (cf. \eqref{eq:def_z}, \eqref{eq:def_xj}).}
            \label{fig:example_xj_speed_estim_equal_and_f_rate_sharp}
        \end{figure}   
        If $x \in \R^2$ with $p^{-1} |x_1|^p \geq |x_2|$, then 
        \begin{align*}
            \| x - x^* \|^p
            &= (x_1^2 + x_2^2)^{\frac{p}{2}}
            \leq (x_1^2 + p^{-2} x_1^{2p})^{\frac{p}{2}}
            = (p^{-\frac{2}{p}} x_1^2 (p^{\frac{2}{p}} + p^{-2 + \frac{2}{p}} x_1^{2(p-1)}))^{\frac{p}{2}} \\
            &= p^{-1} |x_1|^{p} (p^{\frac{2}{p}} + p^{-2 + \frac{2}{p}} x_1^{2(p-1)})^{\frac{p}{2}}
            = f(x) (p^{\frac{2}{p}} + p^{-2 + \frac{2}{p}} x_1^{2(p-1)}))^{\frac{p}{2}}.
        \end{align*}
        If, on the other hand, $p^{-1} |x_1|^p \leq |x_2|$, then
        \begin{align*}
            \| x - x^* \|^{p}
            &= (x_1^2 + x_2^2)^{\frac{p}{2}}
            \leq (p^{\frac{2}{p}} |x_2|^{\frac{2}{p}} + x_2^2)^{\frac{p}{2}}
            = (|x_2|^{\frac{2}{p}} (p^{\frac{2}{p}} + |x_2|^{2 - \frac{2}{p}}))^{\frac{p}{2}} \\
            &= |x_2| (p^{\frac{2}{p}} + |x_2|^{\frac{2(p-1)}{p}})^{\frac{p}{2}}
            = f(x) (p^{\frac{2}{p}} + |x_2|^{\frac{2(p-1)}{p}})^{\frac{p}{2}}.
        \end{align*}
        From these two inequalities, it is easy to follow that $x^*$ if a minimum of order $p$. Furthermore, similar to Example \ref{example:higher_order_convex}, Lemma \ref{lem:convex_higher_order} shows that \eqref{eq:higher_order_semismooth} holds for $f$. \\
        (i) For $p = 1$ and $\kappa \in (0,1)$, consider the sequences $(x^j)_j$, $(\eps_j)_j$ and $(\delta_j)_j$ given by 
        \begin{align*}
            x^j = (\kappa^j,0)^\top, \quad 
            \eps_j = \frac{\kappa^j}{\sqrt{2}}, \quad 
            \delta_j = 0, 
            \quad \forall j \in \N.
        \end{align*}
        See Figure \ref{fig:example_xj_speed_estim_equal_and_f_rate_sharp}(b) for a visualization. For $y = x^j + \eps_j (-\frac{1}{\sqrt{2}},\frac{1}{\sqrt{2}})^\top$ it holds $y \in \Bcl_{\eps_j}(x^j)$ and
        \begin{align*}
            |y_1| = \kappa^j - \eps_j \frac{1}{\sqrt{2}} = \frac{\kappa^j}{2} = |y_2|. 
        \end{align*}
        By construction of $f$, this means that $(0,1)^\top \in \partial f(y) \subseteq \partial_{\eps_j} f(x^j)$. Due to symmetry, we also obtain $(0,-1)^\top \in \partial_{\eps_j} f(x^j)$, such that $\min(\| \partial_{\eps_j} f(x^j) \|) = 0 = \delta_j$ for all $j \in \N$. In light of Theorem \ref{thm:xj_speed_higher_order}, we have
        \begin{align*}
            \| x^j - x^* \| = \| (\kappa^j, 0)^\top \| = \kappa^j = \frac{1}{\sqrt{2}} \eps_j,
        \end{align*}
        so \eqref{eq:xj_speed_higher_order_1} holds with $M = 1/\sqrt{2}$. \\
        (ii) For $p \geq 2$ and $\kappa \in (0,1)$, consider the sequences $(x^j)_j$, $(\eps_j)_j$ and $(\delta_j)_j$ given by 
        \begin{align*}
            x^j = (\kappa^j,0)^\top, \quad 
            \eps_j = 
            \begin{cases}
                (\kappa^p)^j, & j \text{ even} \\
                0, & j \text{ odd}
            \end{cases}, \quad 
            \delta_j = 
            \begin{cases}
                0, & j \text{ even} \\
                (\kappa^{p-1})^j, & j \text{ odd}
            \end{cases}, 
            \quad \forall j \in \N.
        \end{align*}
        If $j$ is even, then for the point $y = x^j + \eps_j (0,1)^\top$, it holds $y \in \Bcl_{\eps_j}(x^j)$ and 
        \begin{align*}
            p^{-1} |y_1|^p = p^{-1}  (\kappa^j)^p < (\kappa^p)^j = |y_2|.
        \end{align*}
        By construction of $f$, this means that $\nabla f(y) = (0,1)^\top \in \partial_{\eps_j} f(x^j)$. Due to symmetry, we also obtain $(0,-1)^\top \in \partial_{\eps_j} f(x^j)$, such that $\min(\| \partial_{\eps_j} f(x^j) \|) = 0 = \delta_j$. If $j$ is odd then $\eps_j = 0$, so
        \begin{align*}
            \| \partial_{\eps_j} f(x^j) \| = \| \{  \nabla f(x^j) \} \| = (\kappa^j)^{p-1} = \delta_j.
        \end{align*}
        In light of Theorem \ref{thm:xj_speed_higher_order}, we have $\| x^j - x^* \| = \| (\kappa^j, 0)^\top \| = \kappa^j$ and
        \begin{align*}
            \max(\eps_j^{\frac{1}{p}}, \delta_j^{\frac{1}{p-1}}) = \kappa^j = \| x^j - x^* \| \quad \forall j \in \N.
        \end{align*}
        Thus, for $M = 1$, we have equality in \eqref{eq:xj_speed_higher_order_2} (and the expression for which the maximum is attained alternates with $j$).
    \end{example}

    \begin{example} \label{example:f_rate_sharp}
        Consider the absolute value function
        \begin{align*}
            f : \R \rightarrow \R, \quad x \mapsto |x|
        \end{align*}
        with unique global minimum $x^* = 0$. Let $K \in \N$. Consider the sequences $(\eps_j)_j$ and $(\delta_j)_j$ given by $\delta_j = 0$ and
        \begin{align*}
            \eps_j = \frac{2}{(2K + 1)^j} \quad \forall j \in \N.
        \end{align*}
        It is possible to show that applying Algorithm \ref{algo:abstract_descent_method} for $x^0 = 1$ (with $W = \partial_{\eps_j} f(x^{j,i})$ in Step 2 and $t = \eps_j / \| v \|$ in Step 6) yields $N_j = K$,
        \begin{align*}
             x^{j,i} = \frac{1}{(2 K + 1)^{j-1}} - i \frac{2}{(2K + 1)^j}
             \ \text{ and } \ x^j = \frac{1}{(2 K + 1)^{j}}
             \quad \forall j \in \N, i \in \{1,\dots,K\}.
        \end{align*}
        In particular, Lemma \ref{lem:fz_rate} can be applied with $r(j) = 1/(2 K + 1)^{j}$ and $\Bar{N} = K$. The result is shown in Figure \ref{fig:example_xj_speed_estim_equal_and_f_rate_sharp}(c) (for $K = 10$). We see that up to a constant factor, the estimate \eqref{eq:fz_rate} is tight.
    \end{example}
    
\end{appendices}

\bibliography{references}

\end{document}